\documentclass[11pt]{amsproc}

\usepackage{amsmath}
\usepackage{amsthm}
\usepackage{amssymb}
\usepackage{mathtools}
\usepackage{mathrsfs}
\usepackage[most]{tcolorbox}
\usepackage{fullpage}
\usepackage[hypertexnames=false]{hyperref}
\usepackage{thmtools}
\usepackage{enumitem}
\usepackage{url}
\usepackage{physics}

\usepackage[capitalize,nameinlink,noabbrev,nosort]{cleveref}
\hypersetup{
	colorlinks=true,       
	linkcolor=olive,         
	citecolor=olive,        
	filecolor=olive,      
	urlcolor=olive,           
}

\usepackage[abbrev]{amsrefs}

\newtheorem{theoremcounter}{Theorem Counter}[section]

\theoremstyle{remark}
\newtheorem{remark}[theoremcounter]{Remark}

\theoremstyle{definition}

\newtheorem{example}[theoremcounter]{Example}

\theoremstyle{plain}
\newtheorem{lemma}[theoremcounter]{Lemma}
\newtheorem{proposition}[theoremcounter]{Proposition}
\newtheorem{corollary}[theoremcounter]{Corollary}
\newtheorem{conjecture}[theoremcounter]{Conjecture}
\newtheorem{theorem}[theoremcounter]{Theorem}
\newtheorem{question}[theoremcounter]{Question}

\numberwithin{equation}{section}

\newcommand{\bbC}{\mathbb{C}}
\newcommand{\bbZ}{\mathbb{Z}}
\newcommand{\bbQ}{\mathbb{Q}}

\newcommand{\calA}{\mathcal{A}}
\newcommand{\calC}{\mathcal{C}}
\newcommand{\calP}{\mathcal{P}}

\newcommand{\scrB}{\mathscr{B}}
\newcommand{\scrD}{\mathscr{D}}
\newcommand{\scrE}{\mathscr{E}}
\newcommand{\scrF}{\mathscr{F}}

\newcommand{\bsp}{\boldsymbol{p}}

\DeclareMathOperator{\Gal}{Gal}
\DeclareMathOperator{\ord}{ord}

\begin{document}

\title[]{Some results on naive transcendence in the ring of integers modulo infinitely large primes}

\author[]{Toshiki Matsusaka}
\address{Faculty of Mathematics, Kyushu University, Motooka 744, Nishi-ku, Fukuoka 819-0395, Japan}
\email{matsusaka@math.kyushu-u.ac.jp}

\author[]{Shin-ichiro Seki}
\address{Nagahama Institute of Bio-Science and Technology, 1266, Tamura, Nagahama, Shiga, 526-0829, Japan}
\email{s\_seki@nagahama-i-bio.ac.jp}


\subjclass[2020]{11J81, 11A41, 11J72, 11B39, 11G05, 11B68}

\maketitle
\begin{abstract}
This paper presents various transcendence results in the ring of integers modulo infinitely large primes $\mathcal{A}$.
In the ring $\mathcal{A}$, one can consider two notions of transcendence.
One is based on the notion of finite algebraic numbers introduced by Rosen, while the other is transcendence in the naive sense.
It is known that transcendence in the latter sense automatically implies transcendence in the former sense.
In this paper, we strengthen results of Anzawa--Funakura and Luca--Zudilin by removing some of their assumptions and, in some cases, upgrading them to statements of naive transcendence.
We also present several examples of naive transcendental numbers that do not seem to have appeared previously in the literature.
Although we are not able to establish naive transcendence for certain numbers, we prove the irrationality of numbers such as $\log_{\calA}(2)$ under the ABC conjecture.
\end{abstract}
\tableofcontents
\section{Introduction}
In this paper, we investigate transcendental number theory in the following ring:
\[
\calA\coloneqq\Biggl(\prod_{p:\text{ prime}}\bbZ/p\bbZ\Biggr) \Bigg/ \Biggl(\bigoplus_{p:\text{ prime}}\bbZ/p\bbZ\Biggr).
\]
This is called the \emph{ring of integers modulo infinitely large primes} (see, for example, \cite{Jarossay2020}).
In other literature (e.g., \cite{KanekoZagier}), it is also referred to as the \emph{poor man's ad\`ele ring}.
To the best of the authors’ knowledge, the earliest appearance in the literature is in a paper by Ax \cite{Ax1968}.

Let $\calP_{\calA}^0$ denotes the set of all finite algebraic numbers defined by Rosen in \cite{Rosen2020}.
Let $\calC_{\calA}$ be the integral closure of $\bbQ$ in $\calA$.
Note that $\calP_{\calA}^0$ and $\calC_{\calA}$ are both $\bbQ$-algebras and the following inclusions hold: $\bbQ\subsetneq\calP_{\calA}^0\subsetneq\calC_{\calA}\subsetneq\calA$.
The set $\calP_{\calA}^0$ is countable, while the set $\calC_{\calA}$ is uncountable.
In $\calA$, there are two possible definitions of transcendence: one given by not belonging to $\calP_{\calA}^0$, and the other by not belonging to $\calC_{\calA}$.
Although Rosen’s definition, in which algebraic elements are defined as those contained in $\calP_{\calA}^0$, gives the correct notion of algebraicity corresponding to $\overline{\bbQ}\subsetneq\bbC$, proving that an element does not belong to $\calC_{\calA}$ is, whenever this is actually the case, stronger than merely showing that it does not belong to $\calP^0_{\calA}$.
When it is necessary to emphasize transcendence in the sense of not belonging to $\calC_{\calA}$, we refer to it as \emph{naive transcendence}.

Since there are still relatively few papers in this area, we begin by mentioning a point that can easily be misunderstood.
For any polynomial $f(x)\in\bbQ[x]$ that is irreducible over $\bbQ$ and has degree at least two, there exist infinitely many primes $p$ for which the reduction of $f(x)$ modulo $p$ has no linear factor.
In other words, for any $\alpha\in\calA$, we have $f(\alpha)\neq 0$.
This may give the mistaken impression that $\bbQ=\calC_{\calA}$.
In fact, however, since $\calA$ has many zero divisors, there are in general many roots of reducible polynomials lying in $\calA\setminus\bbQ$.
See also \cite[Non-Example~10]{KanekoZagier}.

As for transcendence in $\calA$, there are the following three previous works.
The first work is due to Anzawa and Funakura.
The sequence $(F_n(q))_n$ is the \emph{$q$-Fibonacci sequence} defined by $F_0(q)=0$, $F_1(q)=1$, and the recurrence relation $F_{n+2}(q)=F_{n+1}(q)+q^nF_n(q)$, and was introduced by Schur in \cite{Schur1917}.
\begin{theorem}[{Anzawa--Funakura \cite[Theorem~1.2]{AnzawaFunakura2024}}]\label{thm:AnzawaFunakura}
Let $\scrF(q)\coloneqq(F_p(q)\bmod{p})_p$.
For a square-free integer $q>1$, $\scrF(q)\in\calA\setminus\calC_{\calA}$ under the generalized Riemann hypothesis.
\end{theorem}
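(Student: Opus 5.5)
The plan is to use the following criterion for membership in $\calC_{\calA}$. Suppose $\alpha=(a_p)_p\in\calC_{\calA}$. Then there is a nonzero $f(x)\in\bbZ[x]$ with $f(a_p)\equiv 0\pmod p$ for all sufficiently large $p$. Hence, to prove $\scrF(q)\notin\calC_{\calA}$, it is enough to find infinitely many pairwise distinct rational numbers $c_1,c_2,\dots$ with the following property: for each $k$ there are infinitely many primes $p$ with $F_p(q)\equiv c_k\pmod p$.

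Indeed, given such data, $f(c_k)\equiv 0 \pmod p$ holds for infinitely many $p$, which forces $f(c_k)=0$. Since this happens for infinitely many distinct $c_k$, we get $f=0$, a contradiction.

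To produce the $c_k$, I would first evaluate $F_p(q)\bmod p$ explicitly. The starting point is Schur's closed form
\[
F_n(q)=\sum_{j\ge0}q^{j^2}\binom{n-1-j}{j}_q .
\]
Reducing modulo $p$, the Gaussian binomials $\binom{p-1-j}{j}_q$ can be handled by a $q$-Lucas-type argument in the cyclotomic field $\bbQ(\zeta_m)$, where $m=\ord_p(q)$. Write $m=(p-1)/k$, so that $k$ is the index of $\langle q\rangle$ in $(\bbZ/p\bbZ)^\times$. The expected outcome is that $F_p(q)\bmod p$ depends only on this index. Concretely, it should equal the reduction of a fixed quantity $c_k$ built from the classical Fibonacci/Lucas numbers and the $k$-th roots of unity. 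After averaging over the Galois orbit, or using that the relevant symmetric expression is rational, $c_k$ should be a rational number, and $c_k\neq c_{k'}$ for $k\neq k'$.

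Roughly, the computation runs as follows. The sum over $j$ splits into blocks of length $m$. Within each block, the $q$-binomials reduce to ordinary binomials times values of $\binom{\cdot}{\cdot}_{q}$ at a primitive $m$-th root of unity. The resulting sum telescopes into an expression in which only $k=(p-1)/m$ survives. I expect this step to be the main obstacle: getting an exact, $p$-independent formula for $F_p(q)\bmod p$ in terms of the index $k$, and proving that the values $c_k$ are pairwise distinct. I would first test it numerically for $k=1,2,3$, to guess the closed form before proving it.

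The final input is the existence of infinitely many primes $p$ for which $q$ has index exactly $k$. For square-free $q>1$, $q$ is not a perfect power, and under GRH Hooley's method for Artin's conjecture (in the version for prescribed index due to Murata and Lenstra) gives a positive density of such primes. This density is positive at least for infinitely many $k$, for example all $k$ coprime to $2q$ or all odd $k$ outside a finite exceptional set, which suffices. Combining this with the criterion of the first paragraph yields $\scrF(q)\in\calA\setminus\calC_{\calA}$ under GRH.
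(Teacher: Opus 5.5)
Your reduction is correct and is the Anzawa--Funakura criterion (\cref{lem:AnzawaFunakura}, with rational targets). The cited proof does feed that criterion an index-dependent congruence and a GRH count of primes with prescribed index. However, the central step is missing, and as you predict it, it is false: $F_p(q)\bmod p$ does not depend only on $k=I_p(q)$. Run your block decomposition: write $p-1=km$ with $m=\ord_p(q)$, put $j=j_1m+j_0$, apply $q$-Lucas at $\zeta=q\bmod p$, and use $\zeta^{j^2}=\zeta^{j_0^2}$. The $j_0=0$ terms give $\sum_{j_1}\binom{k-j_1}{j_1}=F_{k+1}$, and the $j_0>0$ terms factor, so
\[
F_p(q)\equiv F_{k+1}+F_k\bigl(F_{m+1}(\zeta)-1\bigr)\pmod p .
\]
Nothing telescopes: the dependence on $m$ survives through $F_{m+1}$ evaluated at a primitive $m$-th root of unity. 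Evaluate it with the alternating-sum form of $F_{m+1}$, where only $\binom{m}{0}_\zeta=\binom{m}{m}_\zeta=1$ survive. You get $F_{m+1}(\zeta)=1$ for $m\equiv\pm1$ and $F_{m+1}(\zeta)=0$ for $m\equiv\pm2\pmod 5$. This is exactly \cref{thm:AnzawaFunakura_congruence}, $F_p(q)\equiv F_{k+\left(\frac{m}{5}\right)}$.

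For $5\mid m$, though, $F_{m+1}(\zeta)$ is not rational. For $m=5$ it equals $-\zeta^2-\zeta^3$, a root of $x^2-x-1$, so on those primes there is no rational $c_k$ at all. ``Averaging over the Galois orbit'' is not available here, because $\zeta$ is the specific residue $q\bmod p$.

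This also breaks your final step as stated. Infinitely many primes with $I_p(q)=k$ yield, by pigeonhole, a value $F_{k\pm1}$ attained infinitely often only if infinitely many of those primes have $5\nmid\ord_p(q)$. An index-only density result (Hooley/Murata) does not guarantee that. You need a joint condition: infinitely many primes with $I_p(q)=k$ and with controlled $\ord_p(q)\bmod 5$. For example, take $p\equiv4\pmod5$. This forces $5\nmid\ord_p(q)$ and gives $\left(\frac{\ord_p(q)}{5}\right)=-\left(\frac{k}{5}\right)$, which depends on $k$ alone; it is the same restriction used in the proof of \cref{thm:refined_AnzawaFunakura}.

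Under GRH this requires Hooley's method for prescribed index in an arithmetic progression, in Lenstra's Chebotarev-type generalization. You must also verify that the resulting density is positive for infinitely many $k$; your remark about which $k$ work concerns the index condition alone. With the congruence proved and this refined density input, the values $F_{k-(k/5)}$ tend to infinity, and your criterion finishes the proof.
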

Note that, in the case where $q=1$, $\scrF(1)\in\calP^0_{\calA}\setminus\bbQ$ is a finite algebraic number.

In contrast, Luca and Zudilin succeeded in removing the assumptions that $q$ is square-free and that the generalized Riemann hypothesis holds.
However, unlike Anzawa and Funakura, they did not consider transcendence in the naive sense.
\begin{theorem}[{Luca--Zudilin \cite[Theorem~1.3]{LucaZudilin2025}}]
For every integer $q>1$, $\scrF(q)\in\calA\setminus\calP^0_{\calA}$.
\end{theorem}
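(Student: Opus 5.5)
The plan is to argue by contradiction, using the Galois-theoretic structure of Rosen's finite algebraic numbers rather than mere integrality. Suppose $\scrF(q)\in\calP^0_{\calA}$. By Rosen's description of $\calP^0_{\calA}$, there are a finite Galois extension $K/\bbQ$ and an algebraic integer $\alpha\in K$ such that, for all sufficiently large primes $p$ and a prime $\mathfrak p\mid p$ of $K$ determined by $p$, we have $F_p(q)\equiv \sigma_{\mathfrak p}(\alpha)\pmod{\mathfrak p}$, where $\sigma_{\mathfrak p}$ is a Frobenius at $\mathfrak p$. In particular $F_p(q)\bmod p$ is a root of the fixed polynomial $f(x)=\prod_{\tau\in\Gal(K/\bbQ)}(x-\tau\alpha)$ of degree $D=[K:\bbQ]$, and its value depends only on the Frobenius class of $p$ in $K$.

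Next I will compute $F_p(q)\bmod p$ on thin but controllable families of primes. I will use Schur's (Andrews') closed form
\[
F_n(q)=\sum_{j\ge 0} q^{j^2}\binom{n-1-j}{j}_{q}.
\]
Let $p$ be a primitive (Zsigmondy) prime divisor of $q^m-1$, so that $\ord_p(q)=m$ and $m\mid p-1$. Then the $q$-Lucas theorem reduces the $q$-binomials modulo $p$ to products of ordinary binomials and $q$-binomials at a primitive $m$-th root of unity $\zeta_m$. This should give
\[
F_p(q)\equiv \Phi_m(\zeta_m)\pmod{\mathfrak P},
\]
where $\mathfrak P\mid p$ is a prime of $\bbQ(\zeta_m)$ and $\Phi_m(\zeta_m)$ is an explicit element of $\bbZ[\zeta_m]$. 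It is a finite Fibonacci-type sum, essentially a truncated Rogers--Ramanujan quantity at a root of unity, weighted by ordinary binomials in the digits of $p$ relative to $m$. Since $p\equiv 1\pmod m$, the dependence on $p$ collapses to a single $p$-independent expression, so $\Phi_m$ really depends only on $m$.

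The core step is a comparison. For such $p$, $\Phi_m(\zeta_m)\bmod\mathfrak P$ must coincide with $\sigma_{\mathfrak p}(\alpha)\bmod \mathfrak p$ inside the compositum $K(\zeta_m)$. I will let $m$ run through integers with $\bbQ(\zeta_m)\cap K=\bbQ$, for instance $m$ prime and large. I will then use Chebotarev in $K(\zeta_m)$, or simply the freedom to choose among the many primes dividing $q^m-1$ (their number grows with $m$ by Zsigmondy-type bounds), to find primes with prescribed Frobenius in $K$ and prescribed splitting in $\bbQ(\zeta_m)$. The value $\sigma_{\mathfrak p}(\alpha)$ then ranges over at most $D$ elements. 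Meanwhile the different Galois conjugates $\Phi_m(\zeta_m^a)$ would all have to reduce to these $D$ values modulo primes above $p$. Taking norms, $p$ divides
\[
\prod_{\tau}\mathrm{N}\bigl(\Phi_m(\zeta_m)-\tau\alpha\bigr).
\]
I will then show this product is nonzero and of size at most $C^{m}$. Combined with many distinct primes $p\mid q^m-1$, each at least $m+1$, and the fact that $\Phi_m(\zeta_m)$ has degree growing with $m$ so that it is not among the $\tau\alpha$, this should produce a contradiction for large $m$.

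I expect the main obstacle to be this last counting and nonvanishing step. One has to show that $\Phi_m(\zeta_m)\notin K$ for large $m$, and that the primitive part of $q^m-1$ cannot be entirely absorbed by a norm of height $O(m)$. If the crude height bound is insufficient, my fallback is to exploit Frobenius coherence more sharply. Two primes $p,p'\mid q^m-1$ with the same Frobenius in $K$ but different residues of $p$ modulo $m^2$, or of the base-$m$ digit data, force a nontrivial algebraic relation in $\bbQ(\zeta_m)$ that can be checked to fail directly.
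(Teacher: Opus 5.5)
There is a genuine gap at the core of your plan. The claim that the $q$-Lucas computation produces an element $\Phi_m(\zeta_m)$ depending only on $m=\ord_p(q)$ is wrong, because the dependence on $p$ does not collapse.

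Write $p-1=mI$ with $I=I_p(q)$, and $j=ma'+b'$ with $0\le b'<m$. Then $q^{j^2}\equiv\zeta_m^{b'^2}$. The $q$-binomial $\binom{p-1-j}{j}_{\zeta_m}$ equals $\binom{I-a'}{a'}$ if $b'=0$, and $\binom{I-a'-1}{a'}\binom{m-b'}{b'}_{\zeta_m}$ if $b'>0$. So Schur's formula gives
\[
F_p(q)\equiv F_{I+1}+\bigl(F_{m+1}(\zeta_m)-1\bigr)F_I \pmod{\mathfrak P}.
\]
The ordinary binomials in the quotient digits have top entry about $I<p$, so nothing reduces further modulo $p$; summed over $a'$ they give the Fibonacci numbers $F_I$ and $F_{I+1}$.

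The root-of-unity factor $F_{m+1}(\zeta_m)$ is the truncated Rogers--Ramanujan quantity you had in mind. However, for $5\nmid m$ it equals the rational integer $1$ or $0$ according as $\left(\frac{m}{5}\right)=\pm1$. This is exactly \cref{thm:AnzawaFunakura_congruence}: $F_p(q)\equiv F_{I_p(q)+\left(\frac{m}{5}\right)}\pmod p$. For example, $23$ and $89$ both have $\ord_p(2)=11$, yet $F_{23}(2)\equiv F_3$ and $F_{89}(2)\equiv F_9$.

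So the residue is a rational integer governed by the index $(p-1)/m$. There is no element of growing degree, and your intended mechanism (that $\Phi_m(\zeta_m)$ cannot be one of the conjugates $\tau\alpha$) has nothing to act on. Any contradiction has to come from the size of $F_{I\pm1}$ and from the joint distribution of $\ord_p(q)$ and $I_p(q)$.

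The supporting steps also fail as stated:
\begin{itemize}
\item Zsigmondy gives only one primitive prime divisor of $q^m-1$, not a growing number; for $q=2$ and $2^m-1$ prime there is exactly one.
\item Chebotarev says nothing about the finite set of primes with $\ord_p(q)=m$.
\item $p$ dividing a nonzero norm of exponential size in $m$ is no contradiction, since the primitive part of $q^m-1$ is itself about $q^{\varphi(m)}$.
\item The usable consequence of Rosen's description is that the residue is one fixed rational number on primes splitting completely in $K$. A single $\sigma_{\mathfrak p}(\alpha)\bmod\mathfrak p$ need not even lie in $\mathbb{F}_p$.
\end{itemize}

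The paper instead derives the statement from the stronger \cref{thm:refined_AnzawaFunakura}, $\scrF(q)\notin\calC_\calA$, using $\calP^0_\calA\subset\calC_\calA$. It restricts to $p\equiv4\pmod5$, which forces $5\nmid\ord_p(q)$, and applies \cref{lem:LZ-criterion1} with $b_{1,n}=F_{n-1}$ and $b_{2,n}=F_{n+1}$. For a putative $f$ with $f(\scrF(q))=0$, the argument runs as follows.
\begin{itemize}
\item Take a large odd $r$ such that no $F_{n\pm1}$ with $n\ge r$ is an integer root of $f$.
\item Chebotarev in a Kummer extension of $\bbQ(\zeta_{5r})$ gives $\gg X/\log X$ primes $p\in[X,2X]$ with $p\equiv4\pmod5$ and $r\mid I_p(q)$. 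Each such $p$ divides the nonzero integer $f(F_{I_p(q)\pm1})$.
\item These primes are shown to be too few by splitting into three ranges. Small order is handled by the product of the $q^n-1$. Small index is handled by noting there are few values $f(F_{n\pm1})$, each with $O(n)$ prime factors. Order and index both near $\sqrt X$ are handled by Ford's bound for shifted primes $p-1$ with a divisor in $(\sqrt X/\log X,\,2\sqrt X\log X]$.
\end{itemize}
That size-and-distribution argument is the real content of the proof, and it is absent from your plan.
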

They then proved the transcendence of elements naturally defined from the traces of Frobenius of elliptic curves.
\begin{theorem}[{Luca--Zudilin \cite[Theorem~1]{LucaZudilin}}]
Let $E$ be an elliptic curve defined over $\bbQ$.
For a prime $p$ of good reduction for $E$, define $a_p(E)\coloneqq p+1-\#E(\mathbb{F}_p)$ to be the trace of Frobenius.
Let $\alpha(E)\coloneqq(a_p(E)\bmod{p})_p\in\calA$.
If $E$ does not have complex multiplication, then $\alpha(E) \in \calA \setminus \mathcal{P}^0_{\mathcal{A}}$.
\end{theorem}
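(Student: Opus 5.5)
The plan is to show that membership in $\calP^0_{\calA}$ forces $a_p(E)$, for a positive-density set of primes, to be congruent modulo $p$ to a \emph{fixed} algebraic number. This is incompatible with the Hasse bound $|a_p(E)|\le 2\sqrt p$ together with a simple counting argument. Non-CM enters only to exclude the degenerate case where $a_p(E)$ is literally a root of the relevant polynomial.

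\textbf{Step 1: unwinding Rosen's definition.} Suppose $\alpha(E)\in\calP^0_{\calA}$. By Rosen's definition of finite algebraic numbers \cite{Rosen2020}, there are a finite Galois extension $K/\bbQ$ and an element $\beta\in K$ with the following property. For every prime $p$ that splits completely in $K$, outside a finite set, the $p$-component of $\alpha(E)$ is the reduction of $\beta$ modulo some prime $\mathfrak p\mid p$ of $K$. After scaling $\beta$ by a fixed integer and discarding finitely many $p$, we may assume $\beta\in\mathcal O_K$. Let $g\in\bbZ[x]$ be the minimal polynomial of $\beta$ times a suitable constant. Then for every such $p$ we have $a_p(E)\equiv\beta\pmod{\mathfrak p}$, and taking norms gives
\[
p\mid g(a_p(E)).
\]
By Chebotarev, the set $S$ of such primes has density $1/[K:\bbQ]>0$. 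In particular, $\#\{p\le X: p\in S\}\gg X/\log X$.

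\textbf{Step 2: removing the degenerate primes.} Let $r_1,\dots,r_k$ be the rational integer roots of $g$, if there are any. Since $E$ has no complex multiplication, Serre's result gives, for each fixed integer $r$,
\[
\#\{p\le X: a_p(E)=r\}=o(X/\log X).
\]
This is where the non-CM hypothesis is essential: for CM curves, $a_p(E)=0$ on a density-$1/2$ set. So the set
\[
S'=\{p\in S: g(a_p(E))\neq0\}
\]
still satisfies $\#\{p\le X:p\in S'\}\gg X/\log X$.

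\textbf{Step 3: counting.} Each $p\in S'$ with $p\le X$ satisfies $p\mid g(a)$ for some integer $a$ with $|a|\le 2\sqrt X$ and $g(a)\ne0$. Moreover, $p\ge a^2/4$, so $p$ is large compared with $a$. For a fixed $a$, the nonzero integer $g(a)$ has absolute value $O(X^{\deg g/2})$. It therefore has $O_g(\log X)$ distinct prime divisors exceeding any given bound. Summing over the $O(\sqrt X)$ possible values of $a$, the number of $p\le X$ in $S'$ is $O(\sqrt X\log X)$. This contradicts the lower bound $\gg X/\log X$ from Step 2, so $\alpha(E)\notin\calP^0_{\calA}$.

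\textbf{Main obstacle.} I expect the delicate step to be Step 1: extracting from Rosen's precise definition the statement that, on completely split primes, the components are reductions of one fixed $\beta$, with the right uniformity in the choice of $\mathfrak p$. Once this is in place, it is enough that some conjugate of $\beta$ works for each $p$, since the norm argument does not care which conjugate. Steps 2 and 3 are then routine, given Serre's density-zero theorem for a fixed trace value.
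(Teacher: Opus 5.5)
Your proof is correct. Its core is the same as the paper's: Hasse's bound, plus the count of at most $O(\sqrt{X}\log X)$ prime divisors of the nonzero values $g(a)$ with $|a|\le 2\sqrt{X}$, set against a set of primes of size $\gg X/\log X$. This is exactly the counting argument of Luca--Zudilin's second criterion (\cref{lem:LZ-criterion2}), which the paper uses to prove the stronger \cref{thm:Frobenius_traces}.

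Two ingredients differ.

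First, your Step~1, which you flag as the main obstacle, can be bypassed entirely. Since $\calP^0_\calA\subset\calC_\calA$, the assumption already gives a nonzero $g\in\bbZ[x]$ with $p\mid g(a_p(E))$ for \emph{all} sufficiently large $p$. You then need neither Rosen's Frobenius description nor Chebotarev. With this replacement, the same argument proves the stronger naive statement $\alpha(E)\notin\calC_\calA$.

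Second, to ensure $g(a_p(E))\neq 0$, you discard the primes at which $a_p(E)$ equals an integer root of $g$, using Serre's density-zero theorem. The paper instead restricts to the positive-density set on which $2c_1\sqrt{p}\le a_p(E)\le 2c_2\sqrt{p}$ (\cref{lem:ST-dist}). That lemma comes from Sato--Tate for non-CM curves and from Hecke--Deuring for CM curves. On this set $a_p(E)\to\infty$, so the finitely many integer roots are avoided automatically.

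For this reason, your remark that the non-CM hypothesis is ``essential'' is too strong. It is needed only for your particular way of removing degenerate primes. For a CM curve you could instead keep only the primes supplied by \cref{lem:ST-dist}, and the paper's route proves the result for every $E/\bbQ$.
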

As noted in \cite[Remark~1]{LucaZudilin}, an anonymous referee of \cite{LucaZudilin} pointed out that one can in fact prove that $\alpha(E)\in\calA\setminus\mathcal{C}_{\mathcal{A}}$ for $E/\bbQ$ without CM.

The first aim of this paper is to remove the assumption that the elliptic curve does not have CM, and to establish naive transcendence in the results of Luca and Zudilin.
\begin{theorem}[\cref{thm:refined_AnzawaFunakura} and \cref{thm:Frobenius_traces}]
\
\begin{enumerate}[label=\textup{(\roman*)}]
\item For every integer $q>1$, $\scrF(q)\in\calA\setminus\calC_{\calA}$.
\item Let $E$ be an elliptic curve defined over $\bbQ$. Then $\alpha(E)\in\calA\setminus\calC_{\calA}$.
\end{enumerate}
\end{theorem}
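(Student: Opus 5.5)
The plan for both (i) and (ii) rests on one reformulation of membership in $\calC_{\calA}$. An element $\alpha=(\alpha_p)_p\in\calA$ lies in $\calC_{\calA}$ if and only if there is a nonzero $f\in\bbZ[x]$ with $f(\alpha_p)\equiv 0\pmod p$ for all but finitely many $p$. So naive transcendence of $\alpha$ amounts to the following: for every nonzero $f\in\bbZ[x]$ of degree $d$, there are infinitely many primes $p$ with $p\nmid f(\alpha_p)$, where $\alpha_p$ is any integer lift. I will fix such an $f$ and remove its factors $x^k$ and its rational roots. I will then produce an infinite family of primes on which the integer $f(\alpha_p)$ is nonzero and strictly smaller in absolute value than $p$, which is a contradiction. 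The idea behind both parts is to find primes on which the lift $\alpha_p$ is forced to be \emph{small}, namely of size $p^{\theta}$ with $\theta<1/d$.

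For (i), I will first rewrite $F_p(q)\bmod p$ via Schur's finite-sum formula $F_n(q)=\sum_k q^{k^2}\binom{n-1-k}{k}_q$. Reduced mod $p$, the $q$-binomials only depend on $q$ through its class in $\mathbb F_p^\times$, of order $m=\ord_p(q)$. For primes with $m$ small compared to $p$, the $q$-binomials reduce (via the $q$-Lucas theorem) to products of ordinary binomials and $q$-binomials with entries below $m$. This should express $F_p(q)\bmod p$ as a quantity controlled by $m$ alone, such as a fixed cyclotomic-type expression evaluated at $q$. Then I choose primes $p$ with $m=\ord_p(q)$ fixed, i.e.\ prime divisors of $\Phi_m(q)$ for varying $m$ (Zsigmondy primitive divisors). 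On those primes, $F_p(q)$ becomes congruent to a value that I hope to show does not satisfy any fixed algebraic relation modulo infinitely many primitive divisors. Equivalently, I want $f$ evaluated there to be a nonzero number whose size is smaller than the product of the primitive prime divisors of $\Phi_m(q)$ it would have to absorb. This is the argument that should remove both the square-free hypothesis and GRH in \cref{thm:AnzawaFunakura}.

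For (ii) I will split into the CM and non-CM cases.

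In the non-CM case I will follow the referee's remark in \cite{LucaZudilin}. By Serre's open image theorem, for all $\ell$ outside a finite set the mod-$\ell$ Galois image contains $\mathrm{SL}_2$. Chebotarev then gives positive-density sets of $p$ with $a_p$ in any prescribed residue class mod $\ell$. Combined with the Weil bound $|a_p|\le 2\sqrt p$, this should show that $a_p$ cannot follow the $\le d$ branches of roots of $f$ mod $p$ for almost all $p$.

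In the CM case, with CM by an order in $\bbQ(\sqrt{-D})$, one has $a_p=0$ for inert $p$. For split $p=\pi\bar\pi$ one has $a_p=\pi+\bar\pi=2u$ (up to units and normalization), where $p=u^2+Dv^2$. If $p\mid f(2u)$ and $p=u^2+Dv^2$, then $p$ divides $g(v):=\mathrm{Res}_u\bigl(f(2u),u^2+Dv^2\bigr)$, a polynomial of degree $\le 2d$ in $v$. One must also check that $g(v)\neq0$, which is where the rational roots of $f$ are excluded. It then suffices to have infinitely many primes $p=u^2+Dv^2$ with $|v|\le p^{\theta}$ and $\theta<1/(2d)$. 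Equivalently, I need prime ideals of $\bbQ(\sqrt{-D})$ whose generators lie in very narrow sectors, which I would take from Hecke--Kubilius equidistribution of Gaussian-type primes in sectors.

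The main obstacle is exactly this narrow-sector input: the sector width $p^{-1+1/(2d)}$ must shrink with $d$. If that is unavailable, I would instead exploit $a_p=0$ on inert primes together with the Hecke Gr\"ossencharacter to vary $a_p$ modulo auxiliary primes $\ell$, in the same way as in the non-CM case.
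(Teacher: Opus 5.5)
Your proposal has genuine gaps in both parts. The device that unifies it, finding primes on which a lift of $\alpha_p$ has size $p^{\theta}$ with $\theta<1/d$ so that $0<|f(\alpha_p)|<p$, is not available in either part, and neither part supplies a working substitute.

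\textbf{Part (i).} The reduction is not ``controlled by $m$ alone.'' After $q$-Lucas, the ordinary binomials have top entries of size about $(p-1)/m$. Indeed, the Anzawa--Funakura congruence gives $F_p(q)\equiv F_{I_p(q)\pm1}\pmod p$ with $I_p(q)=(p-1)/\ord_p(q)$, when $5\nmid\ord_p(q)$.

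For fixed $m$ this index is about $p/m$, so there is no small lift. At best, $F_p(q)\bmod p$ is the reduction of an algebraic number $\beta$ built from $m$th roots of unity and the golden ratio, of degree up to order $\varphi(m)$. The integer $p$ must then divide is a norm of $f(\beta)$.

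The only available bound for that norm is $C_f^{\,c\varphi(m)}$, with $C_f$ depending on the coefficients of $f$. All primes with $\ord_p(q)=m$ together have product at most $\Phi_m(q)\le(q+1)^{\varphi(m)}$, and you only control the radical of that. So once $C_f$ is large compared with $q$, the inequality you need cannot be established for any $m$.

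Primes of small order are exactly the negligible set $P_1$ in the paper's proof. The usable information sits in the opposite regime, small index.

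\textbf{Part (ii), non-CM case.} Chebotarev modulo auxiliary primes $\ell$ plus the Weil bound only tells you that $f(a_p)=kp$ with $|k|\ll p^{d/2-1}$. For $d\ge3$ the integer $k$ is unbounded, so prescribing $(a_p,p)$ modulo $\ell$ rules nothing out. The argument closes only for $d\le2$.

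\textbf{Part (ii), CM case.} You need primes $4p=a_p^2+Dw^2$ with $0<|w|\le p^{\theta}$ for arbitrarily small $\theta$. This is a Landau-type open problem: unconditional narrow-sector results reach only a fixed exponent. Your fallback is not specified.

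\textbf{The missing idea.} It is the counting argument of \cref{lem:LZ-criterion2}: you do not need $|f(b_p)|<p$ prime by prime. Suppose the lifts $b_p$ take only $O(X^{\epsilon})$ values for $p\le X$ in $S$. Each nonzero $f(b_p)$ has $O(\log X)$ prime factors, so at most $O(X^{\epsilon}\log X)$ primes of $S$ can satisfy $p\mid f(b_p)$. This contradicts $\#\{p\le X: p\in S\}\gg X^{\epsilon'}$ with $\epsilon'>\epsilon$.

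For (ii), the paper feeds this with Sato--Tate: Barnet-Lamb--Geraghty--Harris--Taylor in the non-CM case, Hecke--Deuring in the CM case. A positive proportion of primes have $2c_1\sqrt p\le a_p\le 2c_2\sqrt p$, so $b_p=a_p\to\infty$ and $b_p=O(\sqrt p)$. Only fixed-width sectors are needed, not narrow ones.

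For (i), the paper proceeds as in \cref{lem:LZ-criterion1}:
\begin{enumerate}[label=\textup{(\alph*)}]
\item It restricts to $p\equiv4\pmod5$, so that $b_{j,n}=F_{n\pm1}$ with $\log|b_{j,n}|=O(n)$.
\item It obtains $\gg X/\log X$ primes with $r\mid I_p(q)$ by Chebotarev in a suitable Kummer extension.
\item It splits these primes into small order (few, since they divide $\prod_{n\le\sqrt X/\log X}(q^n-1)$), small index (handled by the counting above), and intermediate order (handled by Ford's bound).
\end{enumerate}
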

Although the results are strengthened, we emphasize that the ideas of the proofs are essentially due to Luca and Zudilin.
Their proofs were written as proofs that the elements in question do not belong to $\calP^0_{\calA}$.
In fact, although at first sight their arguments seem to depend on Rosen's formulation of finite algebraic numbers using Frobenius elements, they had the potential to be extended to proofs that the elements in question do not belong to $\calC_{\calA}$.
By axiomatizing those arguments as criteria, we obtain several transcendental numbers that were not explicitly given in the previous literature, as the second aim of this paper.

Let $D_n(q)$, $B_n$, and $E_n$ denote the $n$th \emph{Bressoud polynomial}, \emph{Bernoulli number}, and \emph{Euler number}, respectively (see \cref{sec:q-Fibonacci} and \cref{sec:Bernoulli} for their definitions).
\begin{theorem}[\cref{thm:Bressoud_transcence}, \cref{thm:scrB}, and \cref{thm:scrE}]\ 
\begin{enumerate}[label=\textup{(\roman*)}]
\item For every integer $q>1$, $\scrD(q)\coloneqq(D_{p-1}(q) \bmod{p})_p \in \calA\setminus\calC_{\calA}$.
\item $\scrB\coloneqq(B_{\frac{p+1}{2}}\bmod{p})_p\in\calA\setminus\calC_{\calA}$.
\item $\scrE\coloneqq(E_{\frac{p-1}{2}}\bmod{p})_p\in\calA\setminus\calC_{\calA}$.
\end{enumerate}
\end{theorem}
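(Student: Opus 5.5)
The plan is to prove all three parts through one \emph{sparse-lift criterion}, which is essentially the Luca--Zudilin argument in axiomatic form. Suppose $\alpha=(a_p)_p\in\calC_{\calA}$. Then there is a nonzero $f\in\bbZ[x]$ of degree $d$ with $f(a_p)\equiv 0\pmod p$ for all sufficiently large $p$. Suppose moreover that, for all $p$ in a set $S$ of primes of positive lower density, we have integers $c_p$ with the following properties:
\begin{itemize}
\item $c_p\equiv a_p\pmod p$;
\item $|c_p|\le p^{1-\delta}$ for some fixed $\delta>0$;
\item $c_p\to\infty$ as $p\to\infty$ in $S$.
\end{itemize}
We want to show this is impossible. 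Since $f$ has finitely many integer roots and $c_p\to\infty$, we get $f(c_p)\neq0$ for large $p\in S$, so $p\mid f(c_p)$. Now count pairs $(p,c)$ with $p\in S\cap[X/2,X]$, $|c|\le X^{1-\delta}$ and $p\mid f(c)\ne0$. Each nonzero $f(c)$ has $|f(c)|\ll X^{d}$, hence only $O_d(1)$ prime divisors exceeding $X/2$. So the number of such pairs is $O(X^{1-\delta})$. Each $p\in S\cap[X/2,X]$ contributes at least one pair, so there are $\gg X/\log X$ of them, which is a contradiction. Hence $\alpha\notin\calC_{\calA}$. (Equivalently, it suffices that $c_p$ takes fewer than $o(X/\log X)$ distinct values over $p\le X$ in $S$ while avoiding the roots of $f$.) I would state and prove this criterion first; it should also recover the refined Anzawa--Funakura and Frobenius-trace results cited earlier.

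For (ii) and (iii), the plan is to link the Bernoulli and Euler numbers to class numbers of imaginary quadratic fields. For primes $p\equiv 3\pmod 4$, $p>3$, the classical congruence $2B_{\frac{p+1}{2}}\equiv -h(-p)\pmod p$ holds, up to a unit factor that I would check carefully via the Kummer congruence and the class number formula $h(-p)=-\frac1p\sum_{a=1}^{p-1}a\left(\frac ap\right)$. So on $S=\{p\equiv 3\bmod 4\}$ I take $c_p=-h(-p)/2\bmod p$, or rather I apply the criterion to $2\scrB$, which lies in $\calC_{\calA}$ if and only if $\scrB$ does. The size condition comes from $h(-p)\ll p^{1/2}\log p$, and the growth condition from Siegel's theorem $h(-p)\to\infty$. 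For $\scrE$, on $p\equiv1\pmod4$ I would use the analogous congruence relating $E_{\frac{p-1}{2}}$ to $h(-4p)$, obtained from $E_{n}$ via the $\chi_{-4}$-twisted Bernoulli numbers $B_{n+1,\chi_{-4}}$ and the analytic class number formula for $\bbQ(\sqrt{-p})$. Its size and growth bounds are the same. I must confirm that the primes $p\equiv 3\pmod 4$, where $E_{\frac{p-1}{2}}$ vanishes identically, play no role; this is harmless because the criterion needs only positive density.

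For (i), I would first derive, as in \cref{sec:q-Fibonacci}, a congruence expressing $D_{p-1}(q)\bmod p$ through powers $q^{k}$ with $k$ depending on $p$, in parallel with the treatment of $\scrF(q)$ in \cref{thm:refined_AnzawaFunakura}. This would presumably come from the Bressoud identity and the $q$-binomial theorem modulo $p$. The aim is to land in a form where the Luca--Zudilin argument used there, via multiplicative orders of $q$ modulo $p$, applies verbatim.

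I expect two obstacles. One is pinning down the exact congruences, with correct constants and correct residue classes of $p$. The other is the Bressoud computation, since there is no obviously small lift and one has to reduce to the $q$-Fibonacci case.
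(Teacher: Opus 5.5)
Parts (ii) and (iii) follow the paper's route. You use the congruences $-2B_{\frac{p+1}{2}}\equiv h(-p)\pmod p$ for $p\equiv 3\pmod 4$, $p>3$, and $\frac12E_{\frac{p-1}{2}}\equiv h(-4p)\pmod p$ for $p\equiv 1\pmod 4$. You feed them into a sparse-lift criterion, which is \cref{lem:LZ-criterion2} with a slightly different count. Siegel/Heilbronn supplies the growth and $h\ll p^{1/2}\log p$ the size. Your criterion needs two small repairs. First, positive lower density of $S$ does not give $\gg X/\log X$ primes of $S$ in \emph{every} interval $[X/2,X]$. Count over $S\cap[\sqrt X,X]$ instead; a nonzero $f(c)$ with $|c|\le X$ has only $O_{\deg f}(1)$ prime factors exceeding $\sqrt X$. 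Second, the hypothesis should be $|c_p|\to\infty$ rather than $c_p\to\infty$.

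Part (i) has a genuine gap. You never determine $D_{p-1}(q)$ modulo $p$, and the shapes you anticipate are not what happens: there is no expression in powers $q^k$ and no reduction to the $q$-Fibonacci case. The missing step is
\[
D_{p-1}(q)\equiv 2^{I_p(q)}\pmod p .
\]
It follows from Bressoud's identity $D_{p-1}(q)=\sum_{k=0}^{p-1}q^{k^2}\binom{p-1}{k}_q$ together with the congruence $\binom{p-1}{k}_q\equiv\binom{I_p(q)}{k/\ord_p(q)}$ if $\ord_p(q)\mid k$, and $\equiv 0$ otherwise. For the surviving $k$ one also has $q^{k^2}\equiv 1$. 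Your $q$-binomial-theorem idea does prove that congruence. With $o=\ord_p(q)$ and $I=I_p(q)$, in $\mathbb{F}_p[x]$ one has
\[
\sum_{k}q^{\binom{k}{2}}\binom{p-1}{k}_q x^k=\prod_{i=0}^{p-2}(1+q^ix)=\bigl(1-(-x)^{o}\bigr)^{I},
\]
and comparing coefficients, using $q^{\binom{oj}{2}}\equiv(-1)^{j(o+1)}$, gives it.

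With $2^{I_p(q)}$ in hand, \cref{lem:LZ-criterion1} applies with the single sequence $b_{1,n}=2^n$ and any progression. No restriction like $p\equiv 4\pmod 5$ is needed, unlike for $\scrF(q)$. Your sparse-lift criterion cannot replace that lemma here. The bound $2^{I_p(q)}\le p^{1-\delta}$ forces $I_p(q)\le(1-\delta)\log_2 p$, and there is no way to produce a positive-density set of primes on which $I_p(q)\to\infty$. Under GRH none exists, since $I_p(q)$ is then below any prescribed function tending to infinity for almost all $p$. This is why the Chebotarev--Ford three-way splitting in \cref{lem:LZ-criterion1} is needed.
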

The number $\scrD(q)$ is an analogue of $\scrF(q)$, while $\scrB$ and $\scrE$ are, at least apparently, of a different kind from the numbers treated in the previous works.

In addition, we give examples of naive transcendental elements such as $(\lfloor \log p\rfloor \bmod{p})_p$ and $(\lfloor \sqrt{p}\rfloor \bmod{p})_p$ in \cref{sec:examples}, and also discuss the transcendence of $\pi(\bsp)\coloneqq(\pi(p)\bmod{p})_p$ in \cref{sec:pip} (the question of naive transcendence remains open).
Here, $\lfloor X\rfloor$ denotes the greatest integer less than or equal to $X$, and $\pi(X)$ denotes the prime counting function. 

It is easy to prove that the example given by Anzawa and Funakura in~\cite{AnzawaFunakura2024} and $(\lfloor \log p\rfloor \bmod{p})_p$ are algebraically independent.
However, it is a difficult problem to determine whether numbers defined more naturally from arithmetic objects are algebraically independent.
These topics are discussed in \cref{sec:alg_indep}.

Finally, we discuss logarithmic values in $\calA$ in \cref{sec:log}.
Let $p$ be a prime and $\bbZ_{(p)}$ be the localization of $\bbZ$ with respect to the prime ideal $(p)$.
For $\alpha\in\bbZ_{(p)}^{\times}$, we define the \emph{Fermat quotient} of $\alpha$ by
\[
q_p(\alpha)\coloneqq \frac{\alpha^{p-1}-1}{p}\in\bbZ_{(p)}.
\]
Then the \emph{logarithm function} $\log_{\calA}\colon\bbQ^{\times}\to\calA$ is defined by $\log_{\calA}(\alpha)\coloneqq (q_p(\alpha)\bmod{p})_p$.
This is a group homomorphism: $\log_{\calA}(\alpha\beta)=\log_{\calA}(\alpha)+\log_{\calA}(\beta)$.
The values of $\log_{\calA}$ appear in number theoretic contexts (cf.~\cite{KanekoMatsusakaSeki2025}, \cite{Seki2024}) and the following conditional result is known concerning their nonvanishing.
\begin{theorem}[Silverman \cite{Silverman1988}]\label{thm:Silverman-0}
Let $\alpha\not\in\{+1,-1\}$ be a nonzero rational number.
Then, $\log_\calA(\alpha)\neq 0$ under the ABC conjecture.
\end{theorem}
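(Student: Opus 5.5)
Under the ABC conjecture, I will show that for $\alpha\in\bbQ^\times\setminus\{\pm1\}$ there are infinitely many primes $p$ with $q_p(\alpha)\not\equiv 0\pmod p$, that is, $\alpha^{p-1}\not\equiv 1\pmod{p^2}$. Then $\log_\calA(\alpha)\neq 0$ follows directly from the definition of $\calA$. Since $\log_\calA$ is a homomorphism with $\log_\calA(-1)=0$ (because $(-1)^{p-1}=1$ for odd $p$) and $\log_\calA(\alpha^{-1})=-\log_\calA(\alpha)$, we may replace $\alpha$ by $\pm\alpha^{\pm1}$. So write $\alpha=a/b$ with $a>b\ge1$ coprime integers. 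Multiplying numerator and denominator by $b$ changes nothing modulo primes $p\nmid b$, so I will further reduce to studying $a^n-b^n$ with $a,b$ fixed, where only primes not dividing $ab$ matter.

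The core step follows Silverman's argument. For $n\ge1$, factor
\[
a^n-b^n=u_nv_n,
\]
where $u_n$ is the largest divisor of $a^n-b^n$ that is coprime to its cofactor and composed of primes appearing to exponent exactly one. In other words, $u_n$ is the squarefree part made of primes $p$ with $p\,\|\,a^n-b^n$, and $v_n$ is the powerful part. Apply ABC to the triple $b^n+(a^n-b^n)=a^n$; these are coprime after dividing by $\gcd$, which divides a bounded quantity. This gives
\[
a^n\ll_\varepsilon \bigl(ab\,u_n\operatorname{rad}(v_n)\bigr)^{1+\varepsilon}\le \bigl(ab\,u_n v_n^{1/2}\bigr)^{1+\varepsilon}.
\]
Combined with $u_nv_n\le a^n$, this forces $v_n\ll a^{n(\varepsilon')}$, so $u_n\gg a^{n(1-\varepsilon'')}\to\infty$. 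Thus, as $n$ varies, $a^n-b^n$ has prime divisors $p$ with $p^2\nmid a^n-b^n$, and their product grows exponentially.

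Next I pass from these primes to Wieferich-type nonvanishing. If $p\,\|\,a^n-b^n$ and $p\nmid ab$, let $d=\ord_p(a/b)$, so $d\mid n$. Here I need lifting the exponent: for $p$ odd and $p\nmid n/d$, $v_p(\alpha^n-1)=v_p(\alpha^d-1)$. Hence $v_p(\alpha^d-1)=1$, and since $d\mid p-1$ with $(p-1)/d$ prime to $p$, we also get $v_p(\alpha^{p-1}-1)=1$, i.e.\ $q_p(\alpha)\not\equiv0$. The case $p\mid n/d$ is harmless, since then $v_p(\alpha^n-1)\ge2$, contradicting $p\,\|\,a^n-b^n$. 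So every odd prime $p\nmid ab$ with $p\,\|\,a^n-b^n$ is a non-Wieferich prime for $\alpha$.

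It remains to show there are infinitely many such primes, and I expect this to be the main obstacle. Suppose only a finite set $S$ of primes have this property. Then $u_n$ is supported on $S$ and divides $\prod_{p\in S}p$, which is bounded. This contradicts $u_n\to\infty$. The delicate point is checking the ABC bookkeeping carefully: the gcd of $b^n$ and $a^n$ is $1$, the radical of $a^nb^n$ is at most $ab$, and the exponent losses combine correctly so that $v_n$ is indeed subexponential.
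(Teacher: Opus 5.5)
Your proof is correct, and it is essentially Silverman's original argument; the paper only quotes this theorem and gives no proof of its own. As in Silverman, you split $a^n-b^n=u_nv_n$ into the squarefree part $u_n$ (primes dividing exactly once) and the powerful part $v_n$, apply ABC to the coprime triple $b^n+(a^n-b^n)=a^n$ with $\operatorname{rad}(v_n)\le v_n^{1/2}$ to make $v_n$ subexponential and so $u_n\to\infty$, and use lifting the exponent to get $v_p(\alpha^{p-1}-1)=1$ for every odd $p$ exactly dividing $a^n-b^n$.

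The only difference is in the final step. Silverman additionally passes to the primitive part $\gcd(u_n,\Phi_n(a,b))$, which yields a new non-Wieferich prime for each $n$ and hence the quantitative bound $\gg\log X$. For the bare statement $\log_\calA(\alpha)\neq 0$, your observation that a squarefree $u_n$ supported on a fixed finite set of primes is bounded already suffices. Also, since $\gcd(a,b)=1$, the ABC triple is coprime outright, so the gcd correction you mention is unnecessary.
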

As stated in \cite[Section~3]{LucaZudilin2025}, proving the irrationality of logarithmic values is ``perhaps the first step to do before looking into the irrationality of $Z_{\calA}(k)$ for $k > 1$ odd'' (see \cref{sec:Bernoulli} for $Z_{\calA}(k)$).
In this paper, we complete this first step, again under the ABC conjecture.
\begin{theorem}[\cref{cor:Silverman-irr}]
Let $\alpha\not\in\{+1,-1\}$ be a nonzero rational number.
Then, $\log_\calA(\alpha) \in \calA\setminus\bbQ$ under the ABC conjecture.
\end{theorem}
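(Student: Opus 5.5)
We argue by contradiction. Suppose $\log_{\calA}(\alpha)=c\in\bbQ$. If $c=0$ this contradicts \cref{thm:Silverman-0}, so assume $c\neq 0$. Replacing $\alpha$ by $\alpha^{-1}$ if necessary (which changes $c$ to $-c$), write $\alpha=u/v$ with coprime integers $|u|>|v|\ge 1$. Let $S$ be the finite set of primes dividing $uv$, $2$, or the numerator or denominator of $c$. The hypothesis then says that for all but finitely many $p\notin S$,
\[
\alpha^{p-1}\equiv 1+cp \pmod{p^2}.
\]

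First I turn this into a condition on cyclotomic factors. Take $p\notin S$ and let $n=\ord_p(\alpha)$, so $n\mid p-1$, and write $\alpha^n=1+pt$ with $t\in\bbZ_{(p)}$. Then
\[
\alpha^{p-1}=(1+pt)^{(p-1)/n}\equiv 1-\tfrac{pt}{n}\pmod{p^2},
\]
so $q_p(\alpha)\equiv -t/n$. The hypothesis becomes
\[
\frac{\alpha^n-1}{p}\equiv -nc\pmod p.
\]
Since $p\nmid nc$, the right side is a unit mod $p$. Hence every prime $p\notin S$ (apart from finitely many) with $\ord_p(\alpha)=n$ divides $u^n-v^n$ exactly once, and moreover satisfies
\[
\frac{u^n-v^n}{p}\equiv -nc\,v^n \pmod p .\tag{$\ast$}
\]
Note that the Silverman-type conclusion, that $p$ divides $u^n-v^n$ to the first power only, is now automatic and gives no contradiction. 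The point is that $(\ast)$ also pins down the \emph{cofactor} modulo $p$. This is the extra information I intend to exploit.

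Next I bring in ABC. Write
\[
u^n-v^n=M_nR_n,
\]
where $M_n$ is the product of the primitive prime divisors outside $S$ and $R_n$ collects the remaining part: primes of smaller order, which divide $\Phi_d(u,v)$ for $d\mid n$ up to a bounded gcd, together with $S$-primes. Applying ABC to $u^n=v^n+M_nR_n$ as in \cite{Silverman1988}, together with standard bounds on $\gcd(\Phi_n(u,v),n)$, I expect that $R_n$ is at most $|u|^{(\varphi(n)^{-1}\cdot o(1)+\text{lower-order divisor terms})n}$, while $M_n$ is squarefree and of size about $|u|^{\varphi(n)}$.

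Then $(\ast)$ gives, for each $p\mid M_n$,
\[
\frac{M_n}{p}\,R_n\equiv -nc\,v^n\pmod p .
\]
Combining these congruences over all $p\mid M_n$ by the Chinese remainder theorem, the integer
\[
X_n\coloneqq R_n\sum_{p\mid M_n}\frac{M_n}{p}+nc\,v^n\,\omega(M_n)
\]
is divisible by $M_n$; here $\omega(M_n)$ counts the primes summed over. To make the last term integral, I clear the denominator of $c$ first. The aim is then to compare sizes. On one side, $M_n\mid X_n$. On the other, the sum of $M_n/p$ is at most about $M_n\,\omega(M_n)/\min p$, and $\min p\ge n+1$ because $p\equiv1\pmod n$. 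So I hope to show that $0<|X_n|<M_n$ for suitable $n$, for instance for $n$ prime so that $R_n$ involves only $u-v$ and $S$. That would contradict $M_n\mid X_n$.

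I expect this last size comparison to be the main obstacle. The term $R_n\sum M_n/p$ is not obviously smaller than $M_n$ unless $R_n$ is small and all primitive primes are large, and I must also rule out $X_n=0$. If the direct CRT sum does not work, my fallback is to play $n$ against $2n$, using the factorization
\[
u^{2n}-v^{2n}=(u^n-v^n)(u^n+v^n).
\]
For primes of order $2n$ this gives $(u^n+v^n)/p\equiv nc\,v^n\pmod p$, with the opposite sign to $(\ast)$. Then $u^n-v^n$ and $u^n+v^n$ would satisfy incompatible congruences modulo their respective squarefree parts, and ABC, applied to $u^n+v^n$ and its relation with $u^n-v^n$, should force a contradiction for large prime $n$.
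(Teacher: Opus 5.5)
Your derivation of $(\ast)$ is correct; it is the congruence the paper uses. Taking $n=\ell$ a large prime is also the right specialization. After replacing $\alpha$ by $\pm\alpha^{\pm1}$ so that $u>v>0$, every prime divisor of $\Phi_\ell(u,v)=(u^\ell-v^\ell)/(u-v)$ has order exactly $\ell$, since primes dividing $u-v$ are excluded by lifting the exponent. So $R_\ell=u-v$ and $M_\ell=\Phi_\ell(u,v)$, with no ABC input at all. In fact the paper proves $\log_{\calA}(\alpha)\notin\bbQ^{\times}$ unconditionally (\cref{thm:unconditional_Silverman}) and uses ABC only through \cref{thm:Silverman-0} for $c=0$. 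Your ABC bounds on $R_n$ for composite $n$ are vague and not needed.

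Your $X_n$, however, is wrong. Modulo a prime $p_0\mid M_n$, only the term $M_n/p_0$ of $\sum_{p\mid M_n}M_n/p$ survives, so $X_n\equiv ncv^n(\omega(M_n)-1)\pmod{p_0}$, and $M_n\mid X_n$ fails in general. The correct CRT combination, with $c=a/b$, is
\[
X_\ell=(u-v)b\sum_{p\mid\Phi_\ell(u,v)}\frac{\Phi_\ell(u,v)}{p}+a\ell v^\ell .
\]
For this $X_\ell$, even your crude estimate $\sum_{p}1/p\le\omega(M_\ell)/(\ell+1)\ll\log u/\log\ell$ gives $|X_\ell|<\Phi_\ell(u,v)$. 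Hence $X_\ell=0$.

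This is exactly where your argument stops, and it is the genuine gap. $X_\ell=0$ is an exact identity that no size estimate can exclude, and you give no mechanism for excluding it. Your fallback of playing $n$ against $2n$ does not supply one: the two congruences hold modulo disjoint sets of primes and are not in conflict.

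The missing idea is to reduce $X_\ell$ modulo $\ell$. Write $\Phi_\ell(u,v)=p_1\cdots p_r$, which is squarefree by $(\ast)$. Each $p_i\equiv1\pmod\ell$, so each cofactor $\Phi_\ell(u,v)/p_i=\prod_{j\neq i}p_j$ is $\equiv1\pmod\ell$, while $a\ell v^\ell\equiv0\pmod\ell$. Therefore $X_\ell=0$ forces $(u-v)br\equiv0\pmod\ell$.

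But $1\le r<\ell$, because $\ell^r<\Phi_\ell(u,v)<\ell u^{\ell-1}<\ell^\ell$ once $\ell>u$. Choosing $\ell>u-v$ and $\ell>b$ then gives the contradiction. The count $r=\omega(\Phi_\ell(u,v))$ that you inserted into $X_n$ belongs here, in $X_\ell$ reduced modulo $\ell$, not in the definition of $X_\ell$.
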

We emphasize that the assertion $\log_{\calA}(\alpha)\in\calA\setminus\bbQ^{\times}$ is unconditional (see \cref{thm:unconditional_Silverman}).
We were not able to extend the present proof to a proof of transcendence.

Recently, a positive-characteristic analogue of finite algebraic numbers were introduced in \cite{MatsuzukiSakamotoUeki}.
An analogue of the example of Anzawa--Funakura in positive characteristic is also treated there.
Investigating similar phenomena in positive characteristic for the results of the present paper would be an interesting topic for future research.
\subsection*{Acknowledgements}
The authors are grateful to Dr.~Takumi Anzawa and Prof.~Jun Ueki for helpful discussions on notation at the draft stage.
The second author has been interested in the topic of this paper since his student days, before receiving his degree in 2017, and has continued to think about it ever since.
He would like to thank Prof.~Kenji Sakugawa and Prof.~Shuji Yamamoto for the many discussions since that time.
The authors were supported by JSPS KAKENHI (JP24K16901 and JP26K06734, respectively).
\section{Criteria and examples}\label{sec:examples}
In this section, we collect four criteria for naive transcendence and present several artificial transcendental numbers.

Although this notation may not be standard, in this paper we distinguish the following two meanings according to whether parentheses are attached to $\mathrm{mod}$ $p$.
An element of $\bbZ/p\bbZ$ is written as $a\bmod{p}\in\bbZ/p\bbZ$, where $a$ is an integer representative of the residue class.
On the other hand, for integers $a$ and $b$, the congruence relation is written in the form $a\equiv b\pmod{p}$.

Anzawa and Funakura gave the following simple criterion for naive transcendence and used it to prove \cref{thm:AnzawaFunakura}.
\begin{lemma}[{Anzawa--Funakura's criterion \cite[Proposition~3.7]{AnzawaFunakura2024}}]\label{lem:AnzawaFunakura}
Let $\alpha=(a_p)_p\in\calA$.
Let $(b_n)_n$ be a strictly increasing sequence of positive integers.
If, for every $n$, there exist infinitely many primes $p$ such that $a_p=b_n\bmod{p}$, then $\alpha\in\calA\setminus\calC_{\calA}$.
\end{lemma}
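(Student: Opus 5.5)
Argue by contradiction: suppose $\alpha\in\calC_{\calA}$. Then there is a nonzero polynomial $f(x)\in\bbQ[x]$ with $f(\alpha)=0$ in $\calA$. Clearing denominators, take $f\in\bbZ[x]$ nonzero, of degree $d\ge 0$ (if $d=0$, $f$ is a nonzero constant $c$; then $f(\alpha)=0$ would say $c\equiv 0\pmod p$ for all but finitely many $p$, which is absurd). The relation $f(\alpha)=0$ in $\calA$ means there is a finite set $S$ of primes such that $f(a_p)=0$ in $\bbZ/p\bbZ$ for every prime $p\notin S$.

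Since $(b_n)_n$ is strictly increasing, the integers $b_1,\dots,b_{d+1}$ are pairwise distinct. A nonzero polynomial of degree $d$ has at most $d$ roots in $\bbQ$, so some $b=b_n$ with $1\le n\le d+1$ satisfies $f(b)\neq 0$; put $N\coloneqq f(b)\in\bbZ\setminus\{0\}$.

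By hypothesis there are infinitely many primes $p$ with $a_p=b\bmod p$. Choose such a $p$ with $p\notin S$ and $p\nmid N$; this is possible because $S$ is finite and $N$ has only finitely many prime divisors. For this $p$,
\[
0=f(a_p)=f(b)\bmod p=N\bmod p\neq 0
\]
in $\bbZ/p\bbZ$, a contradiction. Hence $\alpha\notin\calC_{\calA}$.

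There is no real obstacle here. The only points needing care are that ``$f(\alpha)=0$ in $\calA$'' means vanishing only outside a finite set of primes, which is handled by the freedom to choose $p$ among infinitely many, and that we need only finitely many terms $b_n$ (namely $d+1$ of them). Strict monotonicity is used only to guarantee infinitely many distinct values.
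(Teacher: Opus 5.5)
Your proof is correct. The paper does not reprove this lemma; it cites \cite[Proposition~3.7]{AnzawaFunakura2024}. Your argument is the standard one, and it uses the same mechanism as the paper's proof of \cref{lem:old}: a nonzero integer $f(b_n)$ cannot be divisible by infinitely many primes, and a nonzero $f$ of degree $d$ cannot vanish at $d+1$ distinct integers.
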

\begin{example}[{\cite[Example~3.8]{AnzawaFunakura2024}}]
$(t_{\pi(p)}\bmod{p})_p\in\calA\setminus\calC_{\calA}$.
Here, $\pi(X)$ denotes the prime counting function, and $(t_n)_n=(1,1,2,1,2,3,1,2,3,4,\dots)$.
This appears to be the first explicit example of a naive transcendental number in the literature, just as Liouville's example was the first explicit example in the classical setting of transcendental numbers.
\end{example}
There is also the following simple criterion, which is different from Anzawa--Funakura's one.
\begin{lemma}\label{lem:old}
If a sequence of integers $(a_p)_p$ satisfies $a_p\to\infty$ and $a_p^d=o(p)$ as $p\to\infty$ for every positive integer $d$, then $(a_p\bmod{p})_p\in\calA\setminus\calC_{\calA}$.
\end{lemma}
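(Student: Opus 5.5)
Suppose, for contradiction, that $\alpha=(a_p\bmod{p})_p\in\calC_{\calA}$. Then there is a nonzero polynomial $f(x)\in\bbQ[x]$ with $f(\alpha)=0$ in $\calA$. After clearing denominators we may take $f(x)=c_dx^d+\cdots+c_0\in\bbZ[x]$ with $c_d\neq 0$ and $d\ge 1$. The relation $f(\alpha)=0$ in $\calA$ says that $f(a_p)\equiv 0\pmod{p}$ for all sufficiently large primes $p$. The plan is to show that for large $p$ the integer $f(a_p)$ is nonzero but has absolute value smaller than $p$, which contradicts $p\mid f(a_p)$.

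The first step is nonvanishing. Since $f$ is a nonzero polynomial of degree $d\ge1$, it has at most $d$ roots. Since $a_p\to\infty$, we get $f(a_p)\neq 0$ once $a_p$ exceeds every real root of $f$, so for all sufficiently large $p$. In fact $|f(a_p)|\to\infty$, because $a_p\to\infty$ and $\deg f\ge 1$.

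The second step is the size bound. For large $p$ we have $a_p\ge 1$, so
\[
|f(a_p)|\le \Bigl(\sum_{i=0}^d|c_i|\Bigr)a_p^d .
\]
By hypothesis $a_p^d=o(p)$, so the right-hand side is $o(p)$, and in particular is less than $p$ for all large $p$. Hence for all sufficiently large $p$ we have $0<|f(a_p)|<p$, which is incompatible with $p\mid f(a_p)$. This is the desired contradiction, and it shows $\alpha\notin\calC_{\calA}$.

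There is no real obstacle. The only point to check is that membership in $\calC_{\calA}$ means being a root of some nonzero polynomial over $\bbQ$; monicity is not needed, since $\calA$ is a $\bbQ$-algebra. Given that, the argument is a direct size-versus-divisibility comparison. The hypothesis for all $d$ is needed because the degree of $f$ is arbitrary, and $a_p\to\infty$ is what guarantees nonvanishing. In particular, no appeal to \cref{lem:AnzawaFunakura} is needed.
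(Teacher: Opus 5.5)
Your proposal is correct and follows essentially the same route as the paper's proof: for a nonzero integer polynomial $f$ of degree $d\ge 1$, you bound $|f(a_p)|=O(a_p^d)=o(p)<p$ and use $a_p\to\infty$ to get $f(a_p)\neq 0$, which contradicts $p\mid f(a_p)$. Your remarks that monicity is unnecessary because $\bbQ$ is a field, and that the growth hypothesis is needed for every $d$, are accurate; the paper leaves both points implicit.
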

\begin{proof}
Let $f(x)$ be a polynomial of degree $d\geq1$ with integer coefficients.
Suppose that $f(a_p)\equiv0\pmod{p}$ holds for all sufficiently large primes $p$.
Since $f(n)=O(n^d)$ for positive integers $n$, it follows that $|f(a_p)|<p$ for all sufficiently large $p$.
The condition $f(a_p)\equiv0\pmod{p}$ then implies that $f(a_p) = 0$.
This contradicts the fact that $a_p\to\infty$.
\end{proof}
\begin{example}
$(\lfloor\log p\rfloor\bmod{p})_p\in\calA\setminus\calC_{\calA}$.
Here, $\lfloor X\rfloor$ denotes the greatest integer less than or equal to $X$.
\end{example}
A criterion that applies even when the growth is not as slow as required in \cref{lem:old} can be extracted from the proof of the main theorem in the second paper of Luca and Zudilin \cite{LucaZudilin} (see also \cite[Remark~1]{LucaZudilin}).
\begin{lemma}[Luca--Zudilin's second criterion]\label{lem:LZ-criterion2}
Let $\alpha = (a_p)_p \in \mathcal{A}$. Let $S$ be an infinite set of primes, and let $(b_p)_{p \in S}$ be a sequence of integers satisfying $b_p\to\infty$, and $b_p = O(p^{\epsilon})$ as $p \to \infty$ for some $0 < \epsilon < 1$.
Suppose that $a_p=b_p \bmod{p}$ for all $p \in S$. 
Assume further that there exists $\epsilon'$ with $\epsilon < \epsilon' < 1$ such that $\#\{ p \le X : p \in S \} \gg X^{\epsilon'}$ for all sufficiently large $X$. 
Then $\alpha \in \mathcal{A} \setminus \mathcal{C}_{\mathcal{A}}$.
\end{lemma}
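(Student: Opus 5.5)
The plan is a counting argument. If $\alpha$ were algebraic in $\calA$, the primes of $S$ would be forced to divide a small supply of nonzero integers. That supply has only about $X^{\epsilon}\log X$ prime divisors below $X$, which is too few to account for $\gg X^{\epsilon'}$ primes of $S$ when $\epsilon'>\epsilon$.

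Suppose, for contradiction, that $\alpha\in\calC_{\calA}$. Clearing denominators, there is a nonzero polynomial $f(x)\in\bbZ[x]$ of degree $d\geq1$ with $f(\alpha)=0$ in $\calA$. By the definition of $\calA$ and the hypothesis $a_p=b_p\bmod{p}$, this means
\[
f(b_p)\equiv 0\pmod{p}\quad\text{for all sufficiently large } p\in S.
\]
Since $f$ has only finitely many integer roots and $b_p\to\infty$, we also get $f(b_p)\neq0$ for all sufficiently large $p\in S$. Let $S'\subseteq S$ be the set of primes of $S$ for which both properties hold; it omits only finitely many primes of $S$.

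Next I bound the size of the relevant values. Fix a constant $C$ with $|b_p|\le Cp^{\epsilon}$, and let $X$ be large. For $p\in S'$ with $p\le X$, the integer $b_p$ lies in $[-CX^{\epsilon},CX^{\epsilon}]$. Hence $b_p$ takes at most $2CX^{\epsilon}+1$ distinct values. Moreover, each such value $b$ satisfies
\[
0<|f(b)|\ll X^{d\epsilon}.
\]

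Now group the primes $p\in S'$ with $p\le X$ according to the value $b=b_p$. Every prime in the group attached to $b$ divides the nonzero integer $f(b)$. Such an integer has at most $\log_2|f(b)|$ distinct prime divisors, and this is $O(d\epsilon\log X)=O(\log X)$. Summing over the at most $2CX^{\epsilon}+1$ possible values $b$, and adding back the finitely many primes of $S\setminus S'$, gives
\[
\#\{p\le X: p\in S\}\ \ll\ X^{\epsilon}\log X .
\]
Since $\epsilon<\epsilon'$, the right-hand side is $o(X^{\epsilon'})$. This contradicts the hypothesis $\#\{p\le X: p\in S\}\gg X^{\epsilon'}$, so $\alpha\in\calA\setminus\calC_{\calA}$.

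The only point needing care is the nonvanishing of $f(b_p)$. It is what prevents a single value $b$ from absorbing infinitely many primes, and it comes from $b_p\to\infty$ together with $f$ having finitely many roots. The rest is the elementary bound $\omega(n)\le\log_2|n|$ applied to each distinct value.
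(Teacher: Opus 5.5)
Your proposal is correct and follows essentially the same counting argument as the paper. Discard the finitely many $p\in S$ for which $b_p$ is an integer root of $f$; observe that for $p\le X$ the value $b_p$ ranges over $O(X^{\epsilon})$ integers, each nonzero $f(b_p)$ having $O(\log X)$ prime divisors; this yields $O(X^{\epsilon}\log X)$ primes, contradicting $\gg X^{\epsilon'}$.
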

\begin{proof}
Let $f(x)$ be a nonzero polynomial with integer coefficients, and suppose that $f(\alpha)=0$.
Then $f(b_p)\equiv 0 \pmod{p}$ holds for all but finitely many $p\in S$.
Let $R$ denote the set of all integer roots of $f(x)$.
It is clear that $R$ is a finite set.
For sufficiently large real numbers $X$, define
\[
S(X)\coloneqq\{p\leq X : p\in S,\ b_p\not\in R,\ p\mid f(b_p)\}.
\]
Then, by the assumption, $\#S(X)\gg X^{\epsilon'}$ holds.

For each $p \in S(X)$, by the assumption, we have $b_p = O(X^{\epsilon})$.
Hence, the number of integers of the form $f(b_p)$, as $p$ ranges over $S(X)$, is at most $O(X^{\epsilon})$.
For each such integer (note that $f(b_p)\neq 0$ since $b_p \not\in R$), the number of its prime divisors is at most $O(\log X)$.
This follows from the fact that, for a positive integer $n$, the number of its prime divisors satisfies $\omega(n)=O(\log n)$, together with the estimate $f(b_p)=O(X^{O(1)})$. 
Since every element of $S(X)$ coincides with one of such prime divisors, it follows that $\#S(X)=O(X^{\epsilon}\log X)$.
Therefore, we have $X^{\epsilon'} \ll X^{\epsilon}\log X$.
Since $\epsilon<\epsilon'$, this yields a contradiction for sufficiently large $X$.
\end{proof}
By this criterion, we obtain an example of transcendental numbers as follows, which cannot be derived from \cref{lem:AnzawaFunakura} and \cref{lem:old}.
\begin{example}
$(\lfloor\sqrt{p}\rfloor\bmod{p})_p\in\calA\setminus\calC_{\calA}$.
\end{example}
The following criterion can be extracted from the proof of the main theorem in the first paper of Luca and Zudilin \cite{LucaZudilin2025}.
For a prime $p$ and $q\bmod{p} \in (\bbZ/p\bbZ)^\times$, we denote by $\ord_p(q)$ the order of $q\bmod{p}$, and set $I_p(q) \coloneqq [(\bbZ/p\bbZ)^\times : \langle q\bmod{p} \rangle] = (p-1)/\ord_p(q)$.
\begin{lemma}[Luca--Zudilin's first criterion]\label{lem:LZ-criterion1}
Let $\alpha = (a_p)_p \in \calA$.
Let $q>1$ and $d\geq 1$ be integers.
For $1 \le j \le d$, let $(b_{j,n})_n$ be sequences of integers satisfying $|b_{j,n}| \to \infty$ and $\log |b_{j,n}| = O(n)$ as $n \to \infty$.
Let $(N,c)$ be a pair of coprime positive integers.
Suppose that, for all sufficiently large primes $p \equiv c \pmod{N}$, there exists an index $1 \le j \le d$ for which $a_p = b_{j, I_p(q)} \bmod{p}$.
Then $\alpha \in \calA \setminus \calC_\calA$.
\end{lemma}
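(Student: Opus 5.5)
The plan is to argue by contradiction through a counting argument, as in the proof of \cref{lem:LZ-criterion2}. Suppose $f(\alpha)=0$ for some nonzero $f(x)\in\bbZ[x]$ of degree $D$. By hypothesis, for every sufficiently large prime $p\equiv c\pmod N$ there is some $j$ with
\[
p\mid f\bigl(b_{j,I_p(q)}\bigr).
\]
Since $|b_{j,n}|\to\infty$ and $f$ has finitely many integer roots, there is $n_0$ such that $f(b_{j,n})\neq0$ for all $j$ and all $n\ge n_0$. Moreover $\log|f(b_{j,n})|=O(n)$, because $\log|b_{j,n}|=O(n)$. So every large prime $p\equiv c\pmod N$ falls into one of two classes: either $I_p(q)<n_0$, or $p$ divides the nonzero integer $P_m\coloneqq\prod_{j=1}^d f(b_{j,m})$ with $m=I_p(q)$.

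The primes of the first kind are handled by the order of $q$. If $I_p(q)<n_0$, then $\ord_p(q)>(p-1)/n_0$ is large. I would show that this first class has density at most some $\delta<1$ within the progression $c\bmod N$, or simply absorb it into the second count below. The main count runs over a parameter $X$ and primes $p\le X$ with $p\equiv c\pmod N$. Since $I_p(q)\cdot\ord_p(q)=p-1\le X$, at least one of $I_p(q)$ and $\ord_p(q)$ is at most $\sqrt X$, and I split accordingly.
\begin{itemize}
\item Primes with $\ord_p(q)=k\le K$ divide $q^k-1$. Hence their number is at most $\sum_{k\le K}k\log q/\log 2=O(K^2)$. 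This is the classical Erd\H{o}s--Murty type estimate.
\item Primes with $I_p(q)=m\le M$ divide $P_m$. Moreover such $p$ satisfy $p\equiv1\pmod m$, hence $p>m$. The number of prime divisors of $P_m$ exceeding a threshold $Y$ is at most $\log|P_m|/\log Y=O(m/\log Y)$. Summing over $m\le M$ gives $O(M^2/\log Y)$.
\end{itemize}
Choosing $K$ and $M$ with $KM\ge X$ and comparing with the prime number theorem for arithmetic progressions, $\#\{p\le X: p\equiv c \ (N)\}\sim X/(\varphi(N)\log X)$, should give the contradiction. Primes below $Y$, say $Y=X^{1/2}$, are negligible.

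The main obstacle is exactly this balancing. The naive choice $K=M=\sqrt X$ only gives an upper bound of order $X/\log X$, which matches the trivial prime count up to constants. So one needs extra savings in one of the two counts.

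For the order count, I would first try restricting to $k$ in a window, using that primes with $\ord_p(q)=k$ divide $\Phi_k(q)$, which has size $q^{\varphi(k)}$, and are $\equiv1\pmod k$. In the range $p>X/\log X$ this gains a factor of $\log X$, since $\log p\ge \tfrac12\log X$ rather than $\log 2$.

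For the index count, I would try to exploit that $I_p(q)=m$ forces $q$ to be an $m$-th power residue modulo $p$, combined with the congruence condition $p\equiv c\pmod N$. Alternatively, I would split the range of $m$ dyadically and use that primes dividing $P_m$ with $p\equiv1\pmod m$ lie in a sparse set, so the effective threshold is $\log p\gg\log X$.

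If the constants still do not separate, the fallback is to allow $KM=X/\log^2 X$ and discard the primes with $I_p(q)\ord_p(q)=p-1>KM$. These are all primes in $(KM,X]$, so I would instead run the argument on the dyadic range $X/2<p\le X$, where $p-1>X/2$ forces the product to be large. Then I would optimise $K=\sqrt X/\log X$, which makes the order count $o(X/\log X)$, and push the remaining loss entirely into the index count via the $\log Y$ saving.
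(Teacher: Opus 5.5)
There is a genuine gap, and it is the ``balancing'' you yourself flag as the main obstacle. Counting prime divisors of $q^k-1$ and of $P_m$ cannot beat the trivial bound. For $p\in(X/2,X]$, your two counts are at best
$\sum_{k\le K}\varphi(k)\log q/\log(X/2)\asymp K^2\log q/\log X$ and $\sum_{m\le M}\log|P_m|/\log(X/2)\le CM^2/\log X$. Here $C$ comes from the implied constant in $\log|b_{j,n}|=O(n)$, so it is arbitrary. Covering every prime forces $KM\ge X$, so by AM--GM the total is $\ge c(q,C)\,X/\log X$. Nothing makes $c(q,C)$ smaller than the constant $1/(2\varphi(N))$ in the prime count.

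None of your proposed savings changes this:
\begin{enumerate}
\item The $\log p\ge\frac12\log X$ gain is exactly what brings the counts down to $X/\log X$ in the first place.
\item Using that $q$ is an $m$-th power residue would need Brun--Titchmarsh-type bounds in $\bbQ(\zeta_m,\sqrt[m]{q})$ uniformly for $m$ up to about $\sqrt X$, which is GRH territory.
\item The fallback does not work. With $K=\sqrt X/\log X$ you must take $M\ge\sqrt X\log X$ to cover $(X/2,X]$, so the index count becomes $\gg X\log X$. The primes with $\ord_p(q)>K$ and $I_p(q)>M$ cannot be discarded, because they are essentially all of them.
\end{enumerate}

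The missing idea is an input showing that few primes have both $\ord_p(q)$ and $I_p(q)$ of size about $\sqrt p$, i.e.\ that $p-1$ rarely has a divisor in $(\sqrt X/\log X,\,2\sqrt X\log X]$. The paper gets this from Ford's bound for $H(x,y,z;P_{-1})$ (shifted primes with a divisor in a given interval), which gives $\ll X(\log\log X)^{O(1)}/(\log X)^{1+\delta}$ for this middle range. Once it is removed, you can take $K=M=\sqrt X/\log X$, and both of your counts become $o(X/\log X)$. Equivalently, you could invoke the actual Erd\H{o}s--Murty theorem that $\ord_p(q)>p^{1/2+\epsilon(p)}$ for almost all $p$ whenever $\epsilon(p)\to0$, rather than the elementary $O(K^2)$ count you attribute to them.

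There is also a second, smaller gap concerning primes with $I_p(q)<n_0$. They cannot be ``absorbed'' into the index count, because $P_m$ may vanish there. To show they have relative density $<1$, you need a positive-density set of primes $p\equiv c\pmod N$ with $I_p(q)\ge n_0$. The paper produces one by Chebotarev in $\bbQ(\zeta_{rN},\sqrt[r]{q})$, with $r$ large, odd and coprime to $N$, which forces $r\mid I_p(q)$. These choices of $r$ are what make $p\equiv c\pmod N$, $p\equiv1\pmod r$ and $q^{(p-1)/r}\equiv1\pmod p$ compatible with positive density.
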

\begin{proof}
Let $f(x)$ be a nonzero polynomial with integer coefficients, and suppose that $f(\alpha) = 0$.
Let $R$ denote the set of all integer roots of $f(x)$.
Choose a sufficiently large odd integer $r$, coprime to $N$, such that $b_{j,n} \not\in R$ for all $1 \le j \le d$ and all $n \ge r$. 
For sufficiently large real numbers $X$, define
\[
P(X) \coloneqq \left\{p \in [X,2X] : p \equiv 1\ (\mathrm{mod}\ r), \ p \equiv c\ (\mathrm{mod}\ N), \ \ord_p(q) \mid \frac{p-1}{r} \right\}.
\]
By our assumptions, for every $p\in P(X)$, there exists an index $1\le j\le d$ such that $a_p=b_{j,I_p(q)}\bmod{p}$ and
\begin{equation}\label{eq:LZ1div}
p \mid f(b_{j, I_p(q)}) \neq 0.
\end{equation}
Here the nonvanishing follows from the inequality $I_p(q)=(p-1)/\ord_p(q)\ge r$ and the choice of $r$.

We first prove that
\[
\#P(X)\gg\frac{X}{\log X}.
\]
Write $q=q_0^k$, where $k$ is a positive integer and $q_0$ is not a perfect power.
Set $m\coloneqq r/\gcd(r,k)$ and $K\coloneqq\bbQ(\zeta_{rN},\!\!\sqrt[m]{q_0})$.
Here, $\zeta_{rN}$ is a primitive $rN$-th root of unity.
By the Chinese remainder theorem, we may choose an integer $a$ satisfying $a\equiv 1 \pmod{r}$ and $a\equiv c \pmod{N}$.
Let $\sigma_a\in\Gal(\bbQ(\zeta_{rN})/\bbQ)$ be the element determined by $\sigma_a(\zeta_{rN})=\zeta_{rN}^a$.
Let $p$ be a prime unramified in $K$, and let $\mathfrak{P}$ be any prime ideal of $\mathcal{O}_K$ lying above $p$.
Write $\varphi_{\mathfrak{P}}\in\Gal(K/\bbQ)$ for the Frobenius element at $\mathfrak{P}$.
Under the assumption $p\equiv 1 \pmod{r}$, the condition $\ord_p(q)\mid\frac{p-1}{r}$ is equivalent to $q^{\frac{p-1}{r}}\equiv 1\pmod{p}$, which is equivalent to $q_0^{\frac{p-1}{m}}\equiv 1\pmod{p}$, and this is further equivalent to $\varphi_{\mathfrak{P}}(\!\!\sqrt[m]{q_0})=\sqrt[m]{q_0}$.
Therefore, for sufficiently large $X$, we have
\[
P(X)=\left\{p\in [X,2X] : \varphi_{\mathfrak{P}}|_{\bbQ(\zeta_{rN})}=\sigma_a, \ \varphi_{\mathfrak{P}}(\!\!\sqrt[m]{q_0})=\sqrt[m]{q_0}\right\}.
\]
On the other hand, since $r$ is odd, $m$ is also odd, and $\bbQ(\zeta_{rN})\cap\bbQ(\!\!\sqrt[m]{q_0})=\bbQ$.
It follows that $x^m-q_0$ is irreducible over $\bbQ(\zeta_{rN})$.
Therefore,
\[
\{\tau\in\Gal(K/\bbQ) : \tau|_{\bbQ(\zeta_{rN})}=\sigma_a,\ \tau(\!\!\sqrt[m]{q_0})=\sqrt[m]{q_0}\}
\]
is nonempty, and in fact consists of a single element.
Hence the Chebotarev density theorem is applicable.

On the other hand, we decompose $P(X)$ into three subsets as follows, and estimate each of them separately:
\begin{align*}
P_1(X) &\coloneqq \{p \in P(X) : \ord_p(q) \le \sqrt{X}/\log X\},\\
P_2(X) &\coloneqq \{p \in P(X) : I_p(q) \le \sqrt{X}/\log X\},\\
P_3(X) &\coloneqq \{p \in P(X) : \ord_p(q), I_p(q) > \sqrt{X}/\log X\}.
\end{align*}

First, for $p \in P_1(X)$, we have $p \mid (q^{\ord_p(q)} - 1)$, and hence
\[
\prod_{p \in P_1(X)} p \le \prod_{n \le \sqrt{X}/\log X} (q^n-1) \le \exp \left(\log q \sum_{n \le \sqrt{X}/\log X} n \right) = \exp\left(O \left(\frac{X}{(\log X)^2}\right)\right).
\]
Note that this inequality argument does not hold when $q=1$.
Since $p\ge X$, it follows that
\[
\#P_1(X)\ll\frac{X}{(\log X)^3}.
\]

Second, the number of integers of the form $f(b_{j, I_p(q)})$, as $p$ ranges over $P_2(X)$, is at most $O(\sqrt{X}/\log X)$.
By the same argument as in the proof of \cref{lem:LZ-criterion2}, the number of prime divisors of each $f(b_{j, I_p(q)})$ is at most $O(\log |b_{j, I_p(q)}|) = O(\sqrt{X}/\log X)$.
Hence, by \eqref{eq:LZ1div}, we obtain
\[
\#P_2(X)\ll\frac{X}{(\log X)^2}.
\]

We estimate $\#P_3(X)$ by using Ford's result.
We use the notation $H(x,y,z)$ and $H(x,y,z;P_{\lambda})$ from \cite{Ford2008}.
If $p\in P_3(X)$, then $\ord_p(q)\mid p-1$ and
\[
\frac{\sqrt{X}}{\log X}<\ord_p(q)\leq 2\sqrt{X}\log X.
\]
Therefore,
\[
\#P_3(X)\leq H\left(2X,\frac{\sqrt{X}}{\log X}, 2\sqrt{X}\log X; P_{-1}\right).
\]
By \cite[Theorem~6]{Ford2008}, we have
\[
H\left(2X,\frac{\sqrt{X}}{\log X}, 2\sqrt{X}\log X; P_{-1}\right)\ll \frac{H\left(2X,\frac{\sqrt{X}}{\log X}, 2\sqrt{X}\log X\right)}{\log X}
\]
and by \cite[Theorem~1~(v)]{Ford2008}, we have
\[
H\left(2X,\frac{\sqrt{X}}{\log X}, 2\sqrt{X}\log X\right)\ll X\cdot u^{\delta}\left(\log\frac{2}{u}\right)^{-\frac{3}{2}},
\]
where
\[
u=\frac{\log z}{\log y}-1\sim \frac{4\log\log X}{\log X}
\]
and $\delta= 1-(1+\log\log 2)/\log 2=0.08607\dots$. 
Therefore, we have
\[
\#P_3(X)\ll\frac{X(\log\log X)^{O(1)}}{(\log X)^{1+\delta}}.
\]

Combining the above estimates, we obtain
\[
\frac{X}{\log X}\ll\#P(X)\le\#P_1(X)+\#P_2(X)+\#P_3(X)\ll\frac{X(\log \log X)^{O(1)}}{(\log X)^{1+\delta}},
\]
which yields a contradiction.
\end{proof}
The estimates for $\#P_1(X)$ and $\#P_3(X)$ are obtained by the same method as in \cite{LucaZudilin2025}.
On the other hand, for $\#P_2(X)$, the key point is to incorporate into the proof from the first paper of Luca--Zudilin \cite{LucaZudilin2025} the counting method introduced in their second paper \cite{LucaZudilin}.
\begin{example}
$(I_p(2)\bmod{p})_p\in\calA\setminus\calC_{\calA}$.
Note that $(I_p(2)\bmod{p})_p-1$ is a zero divisor if and only if the set of primes $p$ for which $2$ is a primitive root modulo $p$ is infinite, in other words, \emph{Artin's primitive root conjecture} holds for $2$.
\end{example}
\section{Polynomials related to Rogers--Ramanujan's identity}\label{sec:q-Fibonacci}
The $q$-Fibonacci sequence was originally introduced in connection with a finite form of the \emph{first Rogers--Ramanujan identity},
\[
F_{n+1}(q) = \sum_{k=0}^{\lfloor n/2 \rfloor} q^{k^2} \binom{n-k}{k}_q = \sum_{k \in \bbZ} (-1)^k q^{\frac{k(5k+1)}{2}} \binom{n}{\lfloor \frac{n+5k+1}{2}\rfloor}_q,
\]
whose limit, together with Jacobi's triple product, yields the identity
\[
\sum_{n=0}^\infty \frac{q^{n^2}}{(q)_n} = \prod_{n=0}^\infty \frac{1}{(1-q^{5n+1})(1-q^{5n+4})}.
\]
Here, $\binom{n}{k}_q \coloneqq \frac{(q)_n}{(q)_k (q)_{n-k}}$ denotes the \emph{$q$-binomial coefficient}, and $(q)_n \coloneqq (1-q)(1-q^2) \cdots (1-q^n)$ is the \emph{$q$-Pochhammer symbol}. 
In this section, we prove, via Luca–Zudilin's first criterion, that the naive transcendence of $\scrF(q)$ holds without assuming the generalized Riemann hypothesis. 
We also consider another such finite form of the same identity, given by the polynomial sequence $(D_n(q))_n$ introduced by Bressoud~\cite{Bressoud1981}. 
For background on finite forms, see Sills~\cite[Chapter 4]{Sills2018}.

Let $(F_n)_n$ be the \emph{Fibonacci sequence}.
Note that $F_n(1)=F_n$.
\begin{theorem}[{Anzawa--Furakura \cite[Theorem~1.1]{AnzawaFunakura2024}}]\label{thm:AnzawaFunakura_congruence}
For a nonzero rational number $q$ and a prime $p$ satisfying $q\in\bbZ_{(p)}^{\times}$ and $\ord_p(q)\not\equiv0\pmod{5}$, we have
\[
F_p(q)\equiv F_{I_p(q)+\left(\frac{\ord_p(q)}{5}\right)} \pmod{p},
\]
where $\left(\frac{\cdot}{\cdot}\right)$ is the Legendre symbol.
\end{theorem}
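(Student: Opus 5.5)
The plan is to reduce $F_p(q)$ modulo $p$ through the second (Andrews--Schur type) finite form of the Rogers--Ramanujan identity recalled above,
\[
F_{n+1}(q)=\sum_{k\in\bbZ}(-1)^kq^{\frac{k(5k+1)}{2}}\binom{n}{\lfloor\frac{n+5k+1}{2}\rfloor}_q,
\]
taken with $n=p-1$. We then evaluate the $q$-binomial coefficients by a $q$-Lucas theorem. Put $m\coloneqq\ord_p(q)$ and $I\coloneqq I_p(q)$, so that $p-1=mI$. In $\mathbb{F}_p$ the image of $q$ is a primitive $m$-th root of unity. The $q$-Lucas theorem then gives
\[
\binom{mI}{j}_q\equiv\binom{I}{j/m}\pmod p \quad\text{if } m\mid j,
\]
and $\binom{mI}{j}_q\equiv0\pmod p$ if $m\nmid j$. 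I would prove this quickly from $(1+x)(1+qx)\cdots(1+q^{m-1}x)\equiv1+(-1)^{m-1}x^m$, which holds because the $q^i$ run over all roots of $y^m-1$. Only the indices $k$ with $m\mid\lfloor\frac{p+5k}{2}\rfloor$ survive, and
\[
F_p(q)\equiv\sum_{k:\ m\mid j_k}(-1)^kq^{\frac{k(5k+1)}{2}}\binom{I}{j_k/m}\pmod p,\qquad j_k\coloneqq\Bigl\lfloor\frac{p+5k}{2}\Bigr\rfloor .
\]

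Next I parametrize the surviving $k$. Since $5\nmid m$, the condition that $m$ divides $\lfloor(p+5k)/2\rfloor$ is a congruence on $k$. It is cleanest to split according to the parity of $k$, or to work modulo $2m$: writing $p+5k=2j+\eta$ with $\eta\in\{0,1\}$, the condition becomes $5k\equiv\eta-1\pmod{2m}$ on the appropriate parity class, using $p\equiv1\pmod m$. I would solve this as $k=k_0+ml$ (or $k_0+2ml$) with an explicit $k_0$ depending on $m\bmod5$, using $5^{-1}\bmod m$. Substituting, the exponent $\frac{k(5k+1)}{2}$ should reduce modulo $m$ to a constant independent of $l$. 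I expect this constant to be $\equiv0$, since $k_0$ is essentially $-1/5$ or $0$ modulo $m$, which kills the factor $k(5k+1)$. The sign $(-1)^k$ becomes $(-1)^{k_0}(-1)^{l}$ or $(-1)^{k_0}(-1)^{ml}$, and $j_k/m$ becomes $\frac{I+5l+c}{2}$ rounded down, for a small shift $c$.

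The third step is to recognize the resulting sum as a Fibonacci number. I will use the $q=1$ case of the same identity, $F_{N+1}=\sum_l(-1)^l\binom{N}{\lfloor\frac{N+5l+1}{2}\rfloor}$, together with its variants with shifted argument (obtained by the symmetry $l\mapsto -l$ and Pascal's rule). The shift $c$ is determined by the class of $m$ modulo $5$. The aim is to show that $m\equiv\pm1\pmod5$ produces $F_{I+1}$ and $m\equiv\pm2\pmod5$ produces $F_{I-1}$, which is exactly $F_{I+\left(\frac{m}{5}\right)}$. The parity of $m$ (and hence of $I$, since $p-1=mI$ is even) must be tracked simultaneously, because it affects both the sign $(-1)^{ml}$ and the floor.

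I expect the main obstacle to be this bookkeeping: handling the floors, the parity of $m$ and $I$, and the residue of $m$ modulo $5$ so that all cases collapse to the single formula with the Legendre symbol, with overall sign $+1$. If the direct case analysis becomes unwieldy, my fallback is a recurrence argument. I would show that $G(I)\coloneqq$ (the reduced sum viewed as a function of $I$ for fixed $m$) satisfies the Fibonacci recurrence in $I$ via Pascal's rule, and then check two initial values.
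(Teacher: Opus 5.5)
\textbf{Verdict.} Your route is the natural one, but the case $I=I_p(q)$ odd is mishandled: there you would land on $-F_{I+\left(\frac{m}{5}\right)}$ instead of $F_{I+\left(\frac{m}{5}\right)}$.

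The route itself is sound: the finite Rogers--Ramanujan identity at $n=p-1$, the $q$-Lucas congruence for $\binom{p-1}{j}_q$, and comparison with the $q=1$ identity. The paper does not reprove this statement; it cites Anzawa--Funakura and later quotes their $q$-Lucas lemma. Your derivation of that congruence from $\prod_{i<m}(1+q^ix)\equiv1+(-1)^{m-1}x^m$ is fine, even when $q\equiv1\pmod p$.

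\textbf{Where the bookkeeping fails.} Your predicted bookkeeping silently assumes $p\equiv1\pmod{2m}$, i.e.\ that $I$ is even. The parity of $m$ does not determine that of $I$. In fact $I$ is odd exactly when $q$ is a quadratic non-residue modulo $p$, which forces $m$ even; for example $p=7$, $q=-1$, $m=2$, $I=3$. In that case $p\equiv1+m\pmod{2m}$, and two things go wrong.
\begin{itemize}
\item Your condition $5k\equiv\eta-1\pmod{2m}$ selects the wrong classes. It should be $5k\equiv\eta-1-m\pmod{2m}$.
\item More seriously, your expectation that the exponent is $\equiv0\pmod m$ fails. For even $m$, $k(5k+1)\equiv0\pmod m$ only gives $\frac{k(5k+1)}{2}\equiv0\pmod{m/2}$. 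On every surviving term one actually has $q^{k(5k+1)/2}\equiv q^{(p-1)/2}=(q^{m/2})^I=(-1)^I$, because $q^{m/2}\equiv-1$ in $\mathbb{F}_p$.
\end{itemize}
This sign is essential. For $p=7$, $q=-1$ the only surviving term is $k=1$. It contributes $(-1)^1q^{3}\binom{6}{6}_q=+1=F_2=F_{I-1}$, whereas setting $q^3=1$ gives $-1$. In general, even with the correct surviving classes, dropping this factor gives $-F_{I+\left(\frac{m}{5}\right)}$ for every odd $I$.

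\textbf{A uniform repair.} Put $j_k=m\frac{I+u}{2}$; that is, $mu=5k$ for even $k$ and $mu=5k+1$ for odd $k$. The surviving $k$ then correspond bijectively to integers $u\equiv I\pmod 2$ with $5\mid u$ (even $k$) or $mu\equiv1\pmod5$ (odd $k$). The factor $q^{k(5k+1)/2}$ equals $(-1)^u=(-1)^I$ throughout, so
\[
F_p(q)\equiv(-1)^I\Biggl(\sum_{\substack{u\equiv I\ (2)\\ 5\mid u}}\binom{I}{\frac{I+u}{2}}-\sum_{\substack{u\equiv I\ (2)\\ mu\equiv1\ (5)}}\binom{I}{\frac{I+u}{2}}\Biggr)\pmod p.
\]
For $a\in\{1,2\}$, let $G_a(N)$ be the same bracket with $N$ in place of $I$ and the condition $mu\equiv1$ replaced by $u\equiv a\pmod5$. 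Using $u\mapsto-u$, the bracket above is $G_1(I)$ when $m\equiv\pm1\pmod5$ and $G_2(I)$ when $m\equiv\pm2\pmod5$.
\begin{itemize}
\item The $q=1$ identity with $n=N$ is exactly $G_1(N)=(-1)^NF_{N+1}$. Its even-$k$ and odd-$k$ parts trade places when $N$ is odd.
\item Pascal's rule gives $G_2(N)=G_2(N-1)-G_1(N-1)$, hence $G_2(N)=(-1)^NF_{N-1}$.
\end{itemize}
So the factor $(-1)^I$ coming from $q^{m/2}=-1$ is precisely what turns the alternating sequences $G_a$ into $F_{I\pm1}$. Your fallback recurrence must include it too: $G_a$ alone satisfies $G(N)=-G(N-1)+G(N-2)$, not the Fibonacci recurrence. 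With this correction your plan goes through.
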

\begin{remark}
Anzawa and Funakura state the theorem excluding the case where $q-1\in p\bbZ_{(p)}\setminus\{0\}$, but it is easy to check that the theorem remains valid even in that case.
\end{remark}
\begin{theorem}\label{thm:refined_AnzawaFunakura}
For every integer $q > 1$, $\scrF(q) \in \calA \setminus \calC_\calA$.
\end{theorem}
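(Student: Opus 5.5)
We plan to apply Luca--Zudilin's first criterion (\cref{lem:LZ-criterion1}) with $\alpha=\scrF(q)$, $d=2$, and the sequences $b_{1,n}\coloneqq F_{n+1}$ and $b_{2,n}\coloneqq F_{n-1}$, where $(F_n)_n$ is the ordinary Fibonacci sequence. These sequences satisfy $|b_{j,n}|\to\infty$, and since $F_n\le 2^n$ we also have $\log|b_{j,n}|=O(n)$. The link between $\scrF(q)$ and these sequences is \cref{thm:AnzawaFunakura_congruence}. For every prime $p\nmid q$ with $\ord_p(q)\not\equiv 0\pmod 5$, it gives
\[
F_p(q)\equiv F_{I_p(q)\pm1}\pmod p.
\]
The sign here is the Legendre symbol $\bigl(\frac{\ord_p(q)}{5}\bigr)\in\{\pm1\}$, which is nonzero exactly because $5\nmid\ord_p(q)$. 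Hence, whenever this condition holds, $a_p=b_{j,I_p(q)}\bmod p$ for some $j\in\{1,2\}$.

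The remaining task is to choose a residue class $p\equiv c\pmod N$ with $\gcd(c,N)=1$ on which, for all sufficiently large $p$, the condition $5\nmid\ord_p(q)$ holds automatically. The simplest choice is $N=5$ and $c=2$, or any $c\not\equiv1\pmod5$. Then $5\nmid p-1$, and since $\ord_p(q)\mid p-1$, we get $5\nmid\ord_p(q)$. We also need $p\nmid q$, which holds for all large $p$. So the hypothesis of \cref{lem:LZ-criterion1} is satisfied for all sufficiently large $p\equiv 2\pmod 5$. The lemma then gives $\scrF(q)\in\calA\setminus\calC_\calA$ for every integer $q>1$. The restriction $q>1$ is used inside the lemma, in the bound on $P_1(X)$.

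There is essentially no obstacle beyond checking these hypotheses, since the analytic difficulty (Chebotarev and Ford's bounds) is already handled in \cref{lem:LZ-criterion1}. The one point to verify carefully is that the criterion does not require the congruence for every prime. It only asks for one residue class, and it allows the index $j$ to depend on $p$. Both freedoms are used here: the class $p\equiv2\pmod5$ removes the exceptional primes, and the two sequences absorb the varying sign of the Legendre symbol.
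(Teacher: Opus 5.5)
Your proposal is correct and matches the paper's proof: apply \cref{lem:LZ-criterion1} with the two shifted Fibonacci sequences $F_{n\pm1}$, using \cref{thm:AnzawaFunakura_congruence} and restricting to a residue class modulo $5$ other than $1$ so that $5\nmid\ord_p(q)$. The only cosmetic differences are that the paper uses $p\equiv 4\pmod 5$ where you use $p\equiv 2\pmod 5$, and it cites Binet's formula where you use $F_n\le 2^n$ for the growth bound.
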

\begin{proof}
By \cref{thm:AnzawaFunakura_congruence}, we see that \cref{lem:LZ-criterion1} applies to the sequences $b_{1,n}=F_{n-1}$ and $b_{2,n}=F_{n+1}$.
To ensure the condition $\ord_p(q) \not\equiv 0 \pmod{5}$, it suffices, for instance, to restrict to primes $p \equiv 4 \pmod{5}$. Indeed, if $5 \mid \ord_p(q)$, then $5 \mid (p-1)$, which is a contradiction.
The condition $\log|b_{j,n}|=O(n)$ follows from Binet's formula.
\end{proof}
We define the Bressoud polynomials $D_n(q)$ by the initial conditions $D_0(q) = 1, D_1(q) = 1+q$, together with the recurrence
\[
D_n(q) = (1+q-q^n+q^{2n-1}) D_{n-1}(q) - q(1-q^{n-1})D_{n-2}(q).
\]
Bressoud showed that these polynomials satisfy the following finite form of the first Rogers--Ramanujan identity:
\begin{align}\label{eq:Bre-identity}
D_n(q) = \sum_{k=0}^n q^{k^2} \binom{n}{k}_q = \sum_{k \in \bbZ} (-1)^k q^{\frac{k(5k+1)}{2}} \binom{2n}{n+2k}_q.
\end{align}
This provides a $q$-analogue of $2^n$. In this setting, the following holds.
\begin{proposition}\label{prop:Bressoud}
For $q\in\bbQ^\times$ and a prime $p$ satisfying $q\in\bbZ_{(p)}^{\times}$, we have 
\[
D_{p-1}(q) \equiv 2^{I_p(q)} \pmod{p}.
\]
\end{proposition}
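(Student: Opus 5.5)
Write $n=p-1$, $m=\ord_p(q)$, so that $n=mI$ with $I=I_p(q)$. The plan is to reduce $D_{p-1}(q)=\sum_{k=0}^{p-1}q^{k^2}\binom{p-1}{k}_q$ modulo $p$ using a $q$-Lucas theorem. The tool is the classical $q$-Lucas congruence for Gaussian binomials at a primitive $m$-th root of unity. If $\zeta$ is a primitive $m$-th root of unity, $n=n_1m+n_0$ and $k=k_1m+k_0$ with $0\le n_0,k_0<m$, then
\[
\binom{n}{k}_\zeta=\binom{n_1}{k_1}\binom{n_0}{k_0}_\zeta .
\]
This holds as an identity in $\bbZ[x]/(\Phi_m(x))$. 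Since $q\bmod p$ has exact order $m$ and $p\nmid m$ (as $m\mid p-1$), $q\bmod p$ is a root of $\Phi_m$ in $\mathbb{F}_p$. So we may evaluate the identity at $x=q$ to get a congruence modulo $p$. Alternatively, we can prove it directly from $(1-xy)(1-qxy)\cdots(1-q^{n-1}xy)$ and the fact that $\prod_{j=0}^{m-1}(1-q^jz)\equiv 1-z^m \pmod p$.

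Apply this with $n=p-1=Im$, so $n_0=0$. The congruence forces $k_0=0$, and for $k=k_1m$ it gives $\binom{p-1}{k}_q\equiv\binom{I}{k_1}\pmod p$. All other $k$ contribute $0$, since $\binom{0}{k_0}_q=0$ for $k_0>0$.

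It remains to control the weight $q^{k^2}$ at $k=k_1m$. Here $q^{k_1^2m^2}=(q^m)^{k_1^2m}\equiv1\pmod p$. Hence
\[
D_{p-1}(q)\equiv\sum_{k_1=0}^{I}\binom{I}{k_1}=2^{I}\pmod p.
\]

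The case $m=1$, i.e.\ $q\equiv1\pmod p$, needs a separate check. Then $\binom{p-1}{k}_q\equiv\binom{p-1}{k}$ and $q^{k^2}\equiv1$, giving $D_{p-1}(q)\equiv 2^{p-1}=2^{I}$ with $I=p-1$. This is consistent with the general formula, and the $q$-Lucas statement also degenerates correctly there. For $q\in\bbQ^\times$ all quantities are interpreted in $\bbZ_{(p)}$, which is harmless because the $q$-binomials are polynomials in $q$ with integer coefficients.

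The main obstacle is justifying the $q$-Lucas congruence carefully modulo $p$ rather than over $\bbC$. I would do this via the $q$-binomial theorem: $\prod_{j=0}^{n-1}(1+q^jz)=\sum_k q^{\binom{k}{2}}\binom{n}{k}_q z^k$. Grouping the factors into $I$ blocks of length $m$ turns each block, modulo $p$, into $\prod_{j=0}^{m-1}(1+q^jz)\equiv 1-(-z)^m$. Comparing coefficients then gives $q^{\binom{k}{2}}\binom{n}{k}_q\equiv(-1)^{k_1(m-1)}\binom{I}{k_1}$ at $k=k_1m$. Finally, I would check that $q^{\binom{k_1m}{2}}\equiv(-1)^{k_1(m-1)}\pmod p$, using $q^{m/2}\equiv-1$ when $m$ is even. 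This cancels the sign and yields exactly the needed value of $\binom{p-1}{k}_q$.
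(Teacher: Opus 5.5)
Your proof is correct and takes essentially the paper's route: the paper also reduces $D_{p-1}(q)=\sum_k q^{k^2}\binom{p-1}{k}_q$ using the congruence $\binom{p-1}{k}_q\equiv\binom{I_p(q)}{k/\ord_p(q)}\pmod{p}$ when $\ord_p(q)\mid k$ (and $0$ otherwise), then sums to $2^{I_p(q)}$. The only difference is that the paper cites this congruence from Anzawa--Funakura and merely asserts that it extends to $q\equiv 1\pmod{p}$, whereas you prove it yourself via $q$-Lucas or the $q$-binomial theorem, which covers that case automatically and also makes explicit the step $q^{k^2}\equiv 1$ that the paper leaves implicit.
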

\begin{proof}
In~\cite[Lemma~2.6]{AnzawaFunakura2024}, Anzawa and Funakura proved that
\[
\binom{p-1}{k}_q \equiv \begin{cases}
\binom{I_p(q)}{k/\mathrm{ord}_p(q)} \pmod{p} &\text{if } \mathrm{ord}_p(q) \mid k,\\
0 \pmod{p} &\text{otherwise},
\end{cases}
\]
under the additional assumption that $q-1 \in \bbZ_{(p)}^{\times}$.
However, it is easy to see that this also holds when this assumption is not satisfied.
Therefore, by \eqref{eq:Bre-identity}, it follows that
\[
D_{p-1}(q) = \sum_{k=0}^{p-1} q^{k^2} \binom{p-1}{k}_q \equiv \sum_{j=0}^{I_p(q)} \binom{I_p(q)}{j}=2^{I_p(q)} \pmod{p}.\qedhere
\]
\end{proof}
We consider $\scrD(q)\coloneqq(D_{p-1}(q)\bmod{p})_p$.
\begin{theorem}\label{thm:Bressoud_transcence}
For every integer $q > 1$, $\scrD(q)\in\calA\setminus\calC_\calA$.
\end{theorem}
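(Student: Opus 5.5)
The plan is to apply Luca--Zudilin's first criterion (\cref{lem:LZ-criterion1}) directly, with the input supplied by \cref{prop:Bressoud}. Fix an integer $q>1$ and take $\alpha=\scrD(q)=(D_{p-1}(q)\bmod{p})_p$. For every prime $p\nmid q$ we have $q\in\bbZ_{(p)}^{\times}$, so \cref{prop:Bressoud} gives
\[
D_{p-1}(q)\equiv 2^{I_p(q)}\pmod{p}.
\]
Thus $a_p=b_{1,I_p(q)}\bmod{p}$ holds for all but finitely many primes $p$, where $d=1$ and $b_{1,n}\coloneqq 2^n$.

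Next we check the hypotheses of \cref{lem:LZ-criterion1}:
\begin{enumerate}[label=\textup{(\alph*)}]
\item We have $|b_{1,n}|=2^n\to\infty$.
\item We have $\log|b_{1,n}|=n\log 2=O(n)$.
\item For the congruence condition on $p$ we may take $(N,c)=(1,1)$ (or any coprime pair). The statement ``for all sufficiently large primes $p\equiv c\pmod N$'' is then satisfied, since the congruence above holds for every prime $p$ larger than the largest prime factor of $q$.
\end{enumerate}
Unlike in \cref{thm:refined_AnzawaFunakura}, no restriction on $\ord_p(q)$ is needed, because \cref{prop:Bressoud} carries no extra hypothesis. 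The criterion then yields $\scrD(q)\in\calA\setminus\calC_{\calA}$.

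There is essentially no obstacle, provided \cref{prop:Bressoud} is valid in the stated generality. The one point to double-check is the claim, made in its proof, that the Anzawa--Funakura congruence for $\binom{p-1}{k}_q$ remains true when $q\equiv1\pmod p$. In that case $\ord_p(q)=1$ and $I_p(q)=p-1$, and the congruence reduces to $\binom{p-1}{k}_q\equiv\binom{p-1}{k}\pmod p$. This follows since each factor $1-q^{j}$ is divisible by $1-q$ with quotient $\equiv j\pmod p$, so the ratio of $q$-Pochhammer symbols reduces to the ordinary binomial coefficient.

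Since the earlier results are assumed, the proof is a one-paragraph verification. The condition $q>1$ is used implicitly in \cref{lem:LZ-criterion1}, in the estimate for $P_1(X)$, and this is exactly why $q=1$ is excluded. Indeed, for $q=1$ we get $D_{p-1}(1)=2^{p-1}\equiv1$, which is rational.
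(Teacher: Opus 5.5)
Your proposal is correct and is the paper's argument: combine \cref{prop:Bressoud} with \cref{lem:LZ-criterion1} for the single sequence $b_n=2^n$ and any admissible progression $(N,c)$. Your check of the case $q\equiv 1\pmod{p}$ is correct: writing $1-q^j=(1-q)(1+q+\cdots+q^{j-1})$, the second factor is $\equiv j\pmod p$. This fills in the point the paper only calls easy.
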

\begin{proof}
Using the congruences established in \cref{prop:Bressoud}, it suffices to apply \cref{lem:LZ-criterion1} with the sequence $b_n = 2^n$ and any arithmetic progression you like.
\end{proof}
Note that $\scrD(1)=1$ by Fermat's little theorem.
\section{Frobenius traces of elliptic curves}\label{sec:Elliptic-Curve}
In this section, we prove the following theorem\footnote{Yuto Tsuruta has also independently obtained a result that overlaps with this theorem, using a different method.}.
\begin{theorem}\label{thm:Frobenius_traces}
Let $E$ be an elliptic curve over $\bbQ$.
Then, $\alpha(E)\coloneqq(a_p(E)\bmod{p})_p\in\calA\setminus\calC_{\calA}$.
\end{theorem}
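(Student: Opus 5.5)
Since $|a_p(E)|\le 2\sqrt{p}$ by Hasse's bound, the natural tool is Luca--Zudilin's second criterion, \cref{lem:LZ-criterion2}. It would suffice to produce an infinite set $S$ of primes with three properties: $|a_p(E)|\to\infty$ along $S$, $|a_p(E)|=O(p^{\epsilon})$ on $S$, and $\#\{p\le X: p\in S\}\gg X^{\epsilon'}$ for some $\epsilon<\epsilon'<1$. Taking $b_p=a_p(E)$ would then finish the argument. The only issue is the sign of $b_p$: the criterion is stated for $b_p\to\infty$, but its proof uses only $|b_p|\to\infty$, through the finiteness of the set $R$ of integer roots. Alternatively, I would restrict $S$ to primes where $a_p>0$. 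The proof splits according to whether $E$ has complex multiplication.

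\emph{CM case.} Suppose $E$ has CM by an order in an imaginary quadratic field $K$. For a prime $p$ of good reduction that splits in $K$, we have $a_p=\pi+\bar\pi$ with $p=\pi\bar\pi$ and $\pi\in\mathcal{O}_K$ (up to the conductor, via the Hecke character). Write $\pi=(u+v\sqrt{-D})/2$, so that $a_p=u$ up to a unit and $4p=u^2+Dv^2$. Choose $S$ to consist of the split primes with $0<|u|\le p^{\epsilon}$, say $\epsilon=1/3$. Counting these amounts to counting primes represented by a binary quadratic form with one variable restricted to a short range. I would invoke a result on primes in thin sectors, such as Hecke's equidistribution of Gaussian-type primes in angular sectors, which gives on the order of $X^{1/2+\epsilon/2}/\log X$ such primes when $|u|\le X^{\epsilon/2}$ is fixed by the argument. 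Using the angle condition $|\arg\pi-\pi/2|\le X^{-1/2+\delta}$, equidistribution in shrinking sectors (Kubilius/Ricci-type results, valid for sectors of width $X^{-\theta}$ with suitable $\theta$) would yield $\gg X^{\epsilon'}$ primes with $|a_p|\ll X^{\epsilon}$, where $\epsilon'>\epsilon$. To get $|a_p|\to\infty$ I would simply discard the primes with $|a_p|\le M$ for each fixed $M$: there are only $O(X^{1/2})$ of them, since for fixed $u$ the number of $v$ is $O(\sqrt X)$. The simpler route is the supersingular primes, those inert in $K$, which have $a_p=0$; but they give $b_p$ bounded, so they cannot be used directly.

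\emph{Non-CM case.} Here I would take $S$ to be all primes with $p^{\epsilon}/2<|a_p|\le p^{\epsilon}$, for a small $\epsilon$. By the Sato--Tate theorem (Barnet-Lamb--Geraghty--Harris--Taylor), the normalized traces $a_p/(2\sqrt p)$ are equidistributed with density $\frac{2}{\pi}\sqrt{1-t^2}\,dt$. However, the interval $|t|\le p^{\epsilon-1/2}$ shrinks, so plain equidistribution is not enough, and an effective Sato--Tate error term would be needed. Unconditional effective versions (Thorner) give error terms only of size roughly $\log\log\log X/\log\log X$, which is too weak. A cleaner route is to choose $S$ instead as the primes with $|a_p|\in(X^{1/4},X^{1/2}]$ in a dyadic sense, i.e.\ $\epsilon$ close to $1/2$. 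Then the interval is a fixed interval $t\in[c,2c]$ in Sato--Tate measure, plain equidistribution gives $\#S\gg X/\log X$, and we can take any $\epsilon\in(1/2,1)$ with $|a_p|\le 2\sqrt p=O(p^{\epsilon})$ and $\epsilon'$ between $\epsilon$ and $1$. The same trick makes the CM case easy as well: the split primes have $a_p/(2\sqrt p)=\cos\theta_p$ with $\theta_p$ equidistributed by Hecke, so a fixed positive proportion of split primes have $a_p\ge c\sqrt p$. This gives density $\gg X/\log X$ and $a_p\to\infty$ with $a_p\le 2\sqrt p$.

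Thus the final plan is uniform across both cases: take $S=\{p: a_p(E)\ge c\sqrt p\}$ for a small constant $c>0$. Then $b_p=a_p(E)\ge c\sqrt p$ tends to $\infty$, $b_p=O(p^{1/2})$ so we may take $\epsilon=0.6$, and positive density of $S$ among primes (from Sato--Tate in the non-CM case, Hecke equidistribution in the CM case) gives $\#S\cap[1,X]\gg X/\log X\gg X^{0.9}$ with $\epsilon'=0.9$. Then \cref{lem:LZ-criterion2} applies. The main obstacle is justifying the positive density of $S$, which rests on deep input in the non-CM case. In the CM case, Hecke's theorem on angular equidistribution of prime ideals for the Grössencharacter must be carefully applied, taking care of the conductor and of the identification $a_p=\psi(\mathfrak p)+\overline{\psi(\mathfrak p)}$.
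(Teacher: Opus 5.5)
Your final plan is correct and is essentially the paper's proof: the paper also applies \cref{lem:LZ-criterion2} to a positive-density set of primes with $2c_1\sqrt{p}\le a_p(E)\le 2c_2\sqrt{p}$, taking $b_p=a_p(E)=O(p^{1/2})$. That density comes from Sato--Tate (Barnet-Lamb--Geraghty--Harris--Taylor) in the non-CM case and from the Hecke--Deuring distribution in the CM case, and the earlier detours through shrinking intervals, thin sectors and effective Sato--Tate are unnecessary, as you yourself conclude.
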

As noted in \cite[Remark~1]{LucaZudilin}, the result is already known when $E$ does not have complex multiplication.
The proof given there uses the theorem of Kaneko--Elkies and Serre’s open image theorem, technically.
From the point of view of Luca--Zudilin's second criterion, however, it suffices to establish a statement such as the following lemma, which holds regardless of whether $E$ has complex multiplication, although the argument ultimately relies on the Sato--Tate distribution and still involves a case-by-case analysis.
\begin{lemma}\label{lem:ST-dist}
There exist constants $0<c_1<c_2\leq 1$ such that the set of primes $p$ satisfying $2c_1\sqrt{p}\leq a_p(E)\leq 2c_2\sqrt{p}$ has positive relative density.
\end{lemma}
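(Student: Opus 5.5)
We split into the non-CM and CM cases and work with the normalized angle $\theta_p\in[0,\pi]$ defined by $a_p(E)=2\sqrt{p}\cos\theta_p$ (for primes of good reduction). It suffices to find $0<c_1<c_2\le 1$ and a positive relative density of primes with $\cos\theta_p\in[c_1,c_2]$. Here ``relative density'' means positive lower density inside the set of all primes (or inside a subset of primes of positive density). That is exactly what will feed the hypothesis $\#\{p\le X: p\in S\}\gg X^{\epsilon'}$ of \cref{lem:LZ-criterion2}, with any $\epsilon<\epsilon'<1$ and $\epsilon=1/2$.

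\emph{Non-CM case.} We invoke the Sato--Tate theorem (Barnet-Lamb--Geraghty--Harris--Taylor): the $\theta_p$ are equidistributed with respect to $\frac{2}{\pi}\sin^2\theta\,d\theta$. Take $c_1=1/2$ and $c_2=1$. The interval $\theta\in[0,\pi/3]$ has positive Sato--Tate measure, so the set of primes with $\sqrt{p}\le a_p(E)\le 2\sqrt{p}$ has natural density
\[
\frac{2}{\pi}\int_0^{\pi/3}\sin^2\theta\,d\theta>0.
\]
The Hasse bound guarantees that $c_2=1$ is allowed.

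\emph{CM case.} Let $E$ have CM by an order in an imaginary quadratic field $K$. For $p$ inert in $K$ (a density $1/2$ set) one has $a_p=0$, so these primes are useless. For $p$ split in $K$ and of good reduction, Deuring's theorem gives $a_p=\pi_p+\overline{\pi_p}$, where $\pi_p=\psi(\mathfrak p)$ is the value of the Hecke Grössencharacter $\psi$ of $E$ at a prime $\mathfrak p\mid p$. By Hecke's equidistribution theorem for Grössencharacters of infinite type $(1,0)$, the angles $\arg\pi_p$ are uniformly distributed on the circle as $\mathfrak p$ ranges over degree-one primes of $K$. Consequently, among split primes, $\theta_p$ is uniformly distributed on $[0,\pi]$. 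Taking again $c_1=1/2$ and $c_2=1$, the corresponding density is $\frac12\cdot\frac13>0$ relative to all primes.

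The main obstacle is the CM case: one has to state correctly the equidistribution of $\pi_p/\sqrt p$ via Hecke $L$-functions $L(s,\psi^k)$ for $k\neq 0$, namely their nonvanishing on $\operatorname{Re}s=1$, and the Weyl criterion. One must also handle the sign ambiguity from the choice of $\mathfrak p$ versus $\overline{\mathfrak p}$, which only replaces $\theta_p$ by $-\theta_p$ and so leaves $\cos\theta_p$ unchanged. Finally, one should check that converting natural density over prime ideals of $K$ into density over rational primes is harmless, since split primes correspond two-to-one to degree-one prime ideals of norm $p$. Once the lemma is established, \cref{lem:LZ-criterion2} with $b_p=a_p(E)\asymp\sqrt p\to\infty$ will give \cref{thm:Frobenius_traces}.
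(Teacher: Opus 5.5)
Your proposal is correct and follows essentially the paper's proof. Both arguments reduce the lemma to the distribution of $\theta_p$: in the non-CM case via the Sato--Tate theorem of Barnet-Lamb--Geraghty--Harris--Taylor, and in the CM case via the Hecke--Deuring distribution (inert primes give $a_p=0$, and split primes have $\theta_p$ uniform on $[0,\pi]$). The only difference is that you fix $c_1=1/2$, $c_2=1$ and sketch how the CM distribution comes from Hecke $L$-functions, whereas the paper cites that distribution and notes that any $0<c_1<c_2\le 1$ works.
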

\begin{proof}
In fact, this holds for any constants $0<c_1<c_2\leq 1$.
For an elliptic curve $E/\bbQ$ and an interval $I = [\alpha, \beta] \subset [0,\pi]$, it is known that
\[
\lim_{X \to \infty} \frac{\#\{p \le X : p\text{ is prime}, \theta_p \in I\}}{\#\{p \le X : p\text{ is prime}\}} = \begin{cases}
\displaystyle{\frac{2}{\pi} \int_I \sin^2\theta \mathrm{d} \theta} &\text{if } E \text{ is non-CM},\\
\displaystyle{\frac{\delta_I}{2} + \frac{\beta-\alpha}{2\pi}} &\text{if $E$ has CM},
\end{cases}
\]
where $a_p = 2\sqrt{p} \cos \theta_p$ and $\delta_I = 1$ if $\pi/2 \in I$ and $\delta_I = 0$ otherwise. 
The CM case is classical and goes back to Hecke and Deuring; see, for example, Sutherland's survey~\cite[Section~2.4]{Sutherland2019}.
The non-CM case was finally proved by Barnet-Lamb, Geraghty, Harris, and Taylor \cite{BGHT2011}.
Therefore, the set of primes $p$ satisfying $2c_1 \sqrt{p} \le a_p(E) \le 2c_2 \sqrt{p}$, that is, $c_1 \le \cos \theta_p \le c_2$, has positive relative density.
\end{proof}
\begin{proof}[Proof of \cref{thm:Frobenius_traces}]
Let $S$ be the set of primes of positive relative density obtained in \cref{lem:ST-dist}. 
For each $p \in S$, define $b_p \coloneqq a_p(E)$.
Then, $b_p \to \infty$ and $b_p = O(p^{\frac12})$.
Moreover, since $\# \{p \le X : p \in S\} \gg X/\log X$, it follows from \cref{lem:LZ-criterion2} that $\alpha(E) \in \calA \setminus \calC_\calA$.
\end{proof}
\section{Bernoulli and Euler numbers}\label{sec:Bernoulli}
Let $(B_n)_{n\geq 0}$ be the sequence of Bernoulli numbers defined by
\[
\frac{te^t}{e^t-1}=\sum_{n=0}^{\infty}\frac{B_n}{n!}t^n.
\]
Using the Bernoulli numbers, we define $Z_{\calA}(k)$ for integers $k\geq2$ as follows:
\[
Z_{\calA}(k)\coloneqq\left(\frac{B_{p-k}}{k}\bmod{p}\right)_p\in\calA.
\]
Recalling that $B_n = 0$ for odd integers $n\geq3$, we see that $Z_{\calA}(k)=0$ whenever $k$ is even.
On the other hand, it is conjectured that $Z_{\calA}(3), Z_{\calA}(5), Z_{\calA}(7), Z_{\calA}(9), \dots$ are algebraically independent over $\bbQ$ (cf.~\cite[Theorem~5.3]{Rosen}).
From the point of view of the Kaneko--Zagier conjecture (\cite[Main Conjecture]{KanekoZagier}), recalling that $Z_{\calA}(k)$ corresponds to $\zeta(k)\bmod{\pi^2}$, this conjecture may be regarded as corresponding to the conjecture that $\pi^2, \zeta(3), \zeta(5), \zeta(7), \zeta(9), \dots$ are algebraically independent over $\bbQ$.
It is natural to regard the counterpart of $\pi^2$ (or $2\pi i$) in $\calA$ as being $0$.

However, nothing is known about this conjecture.
In fact, there is not even a single value of $k$ for which $Z_{\calA}(k)\neq0$ has been proved.
Moreover, it has not even been shown that the set $\{k: Z_{\calA}(k)\neq0\}$ is nonempty.
This stands in contrast to the case of the Riemann zeta values, where Ap\'ery's theorem and Rivoal's theorem are known.
The numbers $Z_{\calA}(k)$ appear as values of \emph{finite multiple zeta values} defined by Kaneko and Zagier, for example, $\zeta_{\calA}^{}(1,2)=3Z_{\calA}(3)$.
Nevertheless, the existence of a nonzero finite multiple zeta value $\zeta_{\calA}^{}(k_1,\dots,k_r)$ has not been established at all, except for $\zeta_{\calA}^{}(\varnothing)=1$.
Assuming the infinitude of regular primes, one can barely show that, for each $k\neq1,2,4$, there exists a nonzero finite multiple zeta value of weight $k$ (\cite[Proposition~2~(i)]{Seki2024}).

Thus, the nonvanishing of $Z_{\calA}(k)$ for odd $k>1$ remains difficult and also outside the scope of this paper.
On the other hand, for
\[
\scrB\coloneqq(B_{\frac{p+1}{2}}\bmod{p})_p,
\]
which is likewise defined from Bernoulli numbers, one can prove its naive transcendence.
Note that $\scrB$ is a zero divisor, since $B_{\frac{p+1}{2}}=0$ whenever $p\equiv 1\mod{4}$.

Let $h(D)$ denote the class number of the quadratic number field of discriminant $D$.
In the case of imaginary quadratic fields, that is, when $D<0$, the famous theorem of Siegel \cite{Siegel1935} asserts that
\[
\lim_{D\to-\infty}\frac{\log h(D)}{\log|D|}=\frac{1}{2}.
\]
However, for our purposes, it is enough to use only the upper bound $h(D)\ll |D|^{\frac{1}{2}+\varepsilon}$ together with Heilbronn's result that $\displaystyle\lim_{D\to-\infty}h(D)=\infty$.

For a prime $p>3$ with $p\equiv 3 \pmod{4}$, the well-known congruence
\begin{equation}\label{eq:Cauchy}
-2B_{\frac{p+1}{2}}\equiv h(-p) \pmod{p}
\end{equation}
holds.
This congruence goes back at least to Cauchy, and is proved in \cite[Equation~(5.2)]{Carlitz1954}.
\begin{theorem}\label{thm:scrB}
$\scrB\in\calA\setminus\calC_{\calA}$.
\end{theorem}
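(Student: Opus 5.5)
We will deduce \cref{thm:scrB} from Luca--Zudilin's second criterion, \cref{lem:LZ-criterion2}, using the Cauchy congruence \eqref{eq:Cauchy}. Let $S$ be the set of primes $p>3$ with $p\equiv 3\pmod{4}$. For $p\in S$ we have $-2B_{\frac{p+1}{2}}\equiv h(-p)\pmod{p}$, and $2$ is invertible modulo $p$. So for $p\in S$ the $p$-component of $-2\scrB$ equals $b_p\bmod{p}$ with $b_p\coloneqq h(-p)$.

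Because $\calC_{\calA}$ is a $\bbQ$-algebra, $\scrB\in\calC_{\calA}$ holds if and only if $-2\scrB\in\calC_{\calA}$. It therefore suffices to show $-2\scrB\notin\calC_{\calA}$. We do this by checking the hypotheses of \cref{lem:LZ-criterion2} for $\alpha=-2\scrB$, the set $S$, and the integers $b_p=h(-p)$ for $p\in S$.

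The hypotheses are verified as follows.
\begin{enumerate}[label=\textup{(\roman*)}]
\item \emph{Growth of $b_p$.} Heilbronn's theorem $h(D)\to\infty$ as $D\to-\infty$ gives $b_p\to\infty$ as $p\to\infty$ in $S$.
\item \emph{Upper bound.} The bound $h(D)\ll|D|^{\frac12+\varepsilon}$ gives $b_p=O(p^{\epsilon})$ with, say, $\epsilon=3/5$. One could also use the elementary class number formula bound $h(-p)\ll\sqrt{p}\log p$ and take $\epsilon=2/3$.
\item \emph{Density of $S$.} Dirichlet's theorem for primes in arithmetic progressions gives $\#\{p\le X:p\in S\}\sim X/(2\log X)\gg X^{\epsilon'}$ for any $\epsilon'$ with $\epsilon<\epsilon'<1$, for example $\epsilon'=4/5$.
\end{enumerate}

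\cref{lem:LZ-criterion2} then yields $-2\scrB\in\calA\setminus\calC_{\calA}$, and hence $\scrB\in\calA\setminus\calC_{\calA}$. The fact that $\scrB$ vanishes at primes $p\equiv1\pmod 4$ causes no trouble, since the criterion only looks at the primes in $S$.

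The only substantive inputs are the arithmetic facts about $h(-p)$: Heilbronn's theorem for divergence and a power-saving upper bound. Both are classical, so there is no real obstacle. The one point needing care is that the criterion requires $\epsilon<1$ strictly, and the upper bound on $h(-p)$ must leave room for a choice of $\epsilon'$ below the $X/\log X$ density of $S$; the choices $\epsilon=3/5$, $\epsilon'=4/5$ above satisfy this. Since we only need divergence and not Siegel's lower bound, no ineffectivity enters.
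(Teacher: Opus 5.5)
Your proposal is correct and follows the paper's proof essentially verbatim. It uses the same set $S$ of primes $p>3$ with $p\equiv 3\pmod{4}$, the same choice $b_p=h(-p)$, the Cauchy congruence, Heilbronn's theorem together with the bound $h(D)\ll|D|^{\frac12+\varepsilon}$, and Luca--Zudilin's second criterion applied to $-2\scrB$. Your explicit remark that $\calC_{\calA}$ is a $\bbQ$-algebra, so that $-2\scrB\notin\calC_{\calA}$ implies $\scrB\notin\calC_{\calA}$, spells out a step the paper leaves implicit.
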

\begin{proof}
Let $S$ be the set of all primes $p>3$ satisfying $p\equiv 3\pmod{4}$, and for each $p\in S$, define $b_p\coloneqq h(-p)$.
Then the facts that $b_p\to\infty$ and that $b_p=O(p^{\frac{1}{2}+\varepsilon})$ for every $\varepsilon>0$ follow from the classical results mentioned above. 
Since $\#\{p\leq X : p\in S\}\sim\frac{1}{2}\frac{X}{\log X}$, it follows from \eqref{eq:Cauchy} and \cref{lem:LZ-criterion2} that $-2\scrB\in\calA\setminus\calC_{\calA}$.
\end{proof}
Let $(E_n)_{n\geq 0}$ be the sequence of Euler numbers defined by
\[
\sech t=\frac{2}{e^t+e^{-t}}=\sum_{n=0}^{\infty}\frac{E_n}{n!}t^n,
\]
which is one of the analogues of Bernoulli numbers.

The Euler numbers give rise to questions analogous to those studied for the Bernoulli numbers.
For example, a prime $p$ that does not divide any of $E_2,E_4,\dots,E_{p-3}$ is called \emph{E-regular} (this is related to Fermat's last theorem \cite{Vandiver1940}), and it is an open problem whether there exist infinitely many such primes.
Corresponding to \cref{thm:scrB}, one can prove the naive transcendence of 
\[
\scrE\coloneqq(E_{\frac{p-1}{2}}\bmod{p})_p.
\]
Note that $\scrE$ is also a zero divisor, since $E_{\frac{p-1}{2}}=0$ whenever $p\equiv 3 \pmod{4}$.
\begin{theorem}\label{thm:scrE}
$\scrE\in\calA\setminus\calC_{\calA}$.
\end{theorem}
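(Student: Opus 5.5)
Mirroring the proof of \cref{thm:scrB}, the plan is to apply Luca--Zudilin's second criterion (\cref{lem:LZ-criterion2}) with $S$ the set of primes $p\equiv 1\pmod{4}$ (say $p>5$). For these primes I will take $b_p$ to be a class number: either $h(-4p)$, the class number of $\bbQ(\sqrt{-p})$, or a fixed nonzero rational multiple of it that is cleared to an integer. The work splits into three steps, in this order.

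The first step is the congruence linking $E_{\frac{p-1}{2}}$ to a class number. The expected statement, the Euler-number analogue of \eqref{eq:Cauchy}, is
\[
E_{\frac{p-1}{2}}\equiv c\, h(-4p)\pmod{p}
\]
for $p\equiv 1\pmod 4$, with $c$ a fixed nonzero constant (I expect $c=\pm\tfrac12$ or $\pm1$); this is also in Carlitz's work. To derive it, recall $E_n=2^{n+1}E_n(1/2)$, where $E_n(x)$ are the Euler polynomials, and express Euler polynomials through Bernoulli polynomials. This writes $E_{\frac{p-1}{2}}$ as a combination of values $B_{\frac{p+1}{2}}(x)$ at $x\in\{1/4,3/4\}$, up to powers of $2$ and a factor $4/(p+1)$. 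Modulo $p$, $B_{\frac{p+1}{2}}(a/4)$ is a character sum $\sum_{j<p}\left(\frac{j}{p}\right)\cdot(\text{linear in } j)$ restricted to residue classes. Equivalently, it is the generalized Bernoulli number $B_{1,\chi}$ attached to the odd character $\chi=\chi_{-4}\left(\frac{\cdot}{p}\right)$ of conductor $4p$, and $B_{1,\chi}=-2h(-4p)/w$ by the analytic class number formula. This step is the main obstacle, and the one I would check most carefully. Only the shape ``fixed nonzero rational times $h(-4p)$ modulo $p$'' matters, so constant factors are harmless: if $c\,b_p\equiv E_{\frac{p-1}{2}}$, then $\scrE$ is a nonzero rational multiple of an element that the criterion shows is not in $\calC_{\calA}$, and $\calC_{\calA}$ is a $\bbQ$-algebra.

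The second step is the growth conditions. From Heilbronn's theorem, $b_p=h(-4p)\to\infty$ as $p\to\infty$. From the class number formula $h(-4p)\ll\sqrt{p}\log p$, we get $b_p=O(p^{\frac12+\varepsilon})$; fix $\epsilon=3/4$, say.

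The third step is the density condition. By Dirichlet's theorem, $\#\{p\le X: p\in S\}\sim \frac12 X/\log X\gg X^{\epsilon'}$ for any $\epsilon'\in(3/4,1)$. Then \cref{lem:LZ-criterion2} gives that $(b_p\bmod p)_p$, where we set $b_p$ arbitrarily outside $S$, or rather any $\alpha$ agreeing with $c^{-1}\scrE$ on $S$, lies outside $\calC_{\calA}$. Since the criterion only requires agreement on $S$, this already yields $c^{-1}\scrE\notin\calC_{\calA}$, hence $\scrE\in\calA\setminus\calC_{\calA}$. The values $E_{\frac{p-1}{2}}=0$ for $p\equiv3\pmod4$ play no role.
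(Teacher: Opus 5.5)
Your proposal is correct and follows the paper's proof essentially verbatim. The paper uses the same set $S$ of primes $p\equiv 1\pmod{4}$, the same $b_p=h(-4p)$ with Heilbronn and the $O(p^{\frac12+\varepsilon})$ bound, the density $\sim\frac12 X/\log X$, and Luca--Zudilin's second criterion, quoting the congruence from Carlitz as $\frac12E_{\frac{p-1}{2}}\equiv h(-4p)\pmod{p}$. So your constant is $c=2$, and your sketch has a minor slip, since $E_n=2^nE_n(1/2)$ rather than $2^{n+1}E_n(1/2)$; neither point affects the argument.
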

\begin{proof}
Let $S$ be the set of all primes $p$ satisfying $p\equiv 1\pmod{4}$, and for each $p\in S$, define $b_p\coloneqq h(-4p)$.
Then the facts that $b_p\to\infty$ and that $b_p=O(p^{\frac{1}{2}+\varepsilon})$ for every $\varepsilon>0$ follow from the classical results mentioned above.
Since $\#\{p\leq X : p\in S\}\sim\frac{1}{2}\frac{X}{\log X}$ and the congruence
\[
\frac{1}{2}E_{\frac{p-1}{2}}\equiv h(-4p)\pmod{p}
\]
holds (\cite[Equation~(1.2)]{Carlitz1954}), it follows from \cref{lem:LZ-criterion2} that $\frac{1}{2}\scrE\in\calA\setminus\calC_{\calA}$.
\end{proof}
Since $\scrB\scrE=0$, the numbers $\scrB$ and $\scrE$ are algebraically dependent (see \cref{sec:alg_indep} for the definition of algebraic independence and examples). 
Nevertheless, it is easy to see that the following holds.
\begin{proposition}\label{prop:BE-lin-indep}
The numbers $\scrB$ and $\scrE$ are linearly independent over $\bbQ$.
\end{proposition}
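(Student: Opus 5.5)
Suppose $x\scrB+y\scrE=0$ in $\calA$ with $x,y\in\bbQ$; the plan is to force $x=y=0$. Clearing denominators, we may assume $x,y\in\bbZ$. The relation means that $xB_{\frac{p+1}{2}}+yE_{\frac{p-1}{2}}\equiv0\pmod{p}$ for all sufficiently large primes $p$. We split the primes by their residue modulo $4$, using the vanishing facts noted before \cref{thm:scrB} and \cref{thm:scrE}.

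For $p\equiv 3\pmod 4$ we have $E_{\frac{p-1}{2}}=0$, since $\frac{p-1}{2}$ is odd. The relation then reads $xB_{\frac{p+1}{2}}\equiv0\pmod p$, and multiplying by $-2$ and using \eqref{eq:Cauchy} gives $p\mid x\,h(-p)$ for all large $p\equiv3\pmod4$. If $x\neq0$, then $x\,h(-p)$ is a nonzero integer with $0<|x|h(-p)\ll p^{\frac12+\varepsilon}<p$ for large $p$, which is impossible. Here we use the upper bound $h(D)\ll|D|^{\frac12+\varepsilon}$ and $h(-p)\ge1$. Hence $x=0$.

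Symmetrically, for $p\equiv1\pmod4$ we have $B_{\frac{p+1}{2}}=0$, since $\frac{p+1}{2}$ is odd and greater than $1$. The congruence $\frac12E_{\frac{p-1}{2}}\equiv h(-4p)\pmod p$ then gives $p\mid 2y\,h(-4p)$. Since $0<h(-4p)\ll p^{\frac12+\varepsilon}$, the same size argument forces $y=0$.

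No step is a real obstacle. The only point needing care is that each residue class mod $4$ contains infinitely many primes, so that "all sufficiently large $p$" in each class is a nonempty condition; Dirichlet's theorem supplies this. Alternatively, one could invoke \cref{thm:scrB} and \cref{thm:scrE} directly: $x\scrB=0$ with $x\neq0$ would force $\scrB=0\in\bbQ\subset\calC_\calA$, a contradiction.
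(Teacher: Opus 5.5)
Your proposal is correct and is essentially the paper's proof: you split the primes by residue mod $4$, use the vanishing of $E_{\frac{p-1}{2}}$ (resp.\ $B_{\frac{p+1}{2}}$) on one class, and use the class-number congruences with $0<h(-p),h(-4p)<p$ to force each coefficient to be divisible by infinitely many primes, hence zero. Your closing alternative also works, since the same mod-$4$ split is exactly what reduces the relation to $x\scrB=0$.
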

\begin{proof}
Assume that there exist integers $b$ and $e$ such that $b\scrB+e\scrE=0$.
Then, for all sufficiently large primes $p$, one has
\[
bB_{\frac{p+1}{2}}+eE_{\frac{p-1}{2}}\equiv 0\pmod{p}.
\]
If $p\equiv 1\pmod{4}$, then $B_{\frac{p+1}{2}}=0$ and $E_{\frac{p-1}{2}}\not\equiv 0\pmod{p}$, the latter following from $0<h(-4p)<p$.
Hence $e\equiv 0\pmod{p}$.
Since there are infinitely many such primes $p$, it follows that $e=0$.
Next, consider the case where $p\equiv 3\pmod{4}$.
Since $0<h(-p)<p$, one has $B_{\frac{p+1}{2}}\not\equiv 0\pmod{p}$, and therefore $b\equiv 0\pmod{p}$.
Again, since there are infinitely many such primes $p$, we obtain $b=0$.
\end{proof}
\section{An intriguing example}\label{sec:pip}
Define $\pi(\bsp)\coloneqq(\pi(p)\bmod{p})_p=(n\bmod{p_n})_{p_n}$, where $p_n$ denotes the $n$th prime.
Although this is somewhat artificial, it is reminiscent of the \emph{Champernowne constant}
\[
0.1234567891011121314151617181920\dots
\]
in the sense that it is formed by listing the natural numbers.
Champernowne proved in \cite{Champernowne1933} that the Champernowne constant is normal in base $10$, and Mahler proved its transcendence in \cite{Mahler1937}.

Since $\pi(p)=O(p^{\epsilon})$ does not hold, Luca--Zudilin's second criterion cannot be applied.
Nevertheless, the irrationality of $\pi(\bsp)$, and even the following stronger statement, can be proved easily.
Although the full strength of the assertion is not needed, we use the prime number theorem.
\begin{proposition}\label{prop:pip}
$\pi(\bsp)\in\calA\setminus\calP^0_{\calA}$.
\end{proposition}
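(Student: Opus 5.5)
The plan is to argue by contradiction. We use only the Frobenius description of Rosen's finite algebraic numbers, together with size information on $\pi(p)$ coming from the prime number theorem.

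First, unpack the hypothesis. Suppose $\pi(\bsp)\in\calP^0_{\calA}$. By Rosen's definition there is a finite Galois extension $K/\bbQ$ and an element $\beta\in\mathcal{O}_K[1/M]$ for some integer $M$ with the following property. For all sufficiently large $p$ that split completely in $K$, there is a prime $\mathfrak{P}\mid p$ of $\mathcal{O}_K$ with $\pi(p)\equiv\beta\pmod{\mathfrak{P}}$. Let $g\in\bbZ[x]$ be a nonzero integer multiple of the minimal polynomial of $\beta$, and put $d\coloneqq\deg g$. Taking norms gives
\[
p_n\mid g(n)\qquad\text{for all } n \text{ in } T\coloneqq\{n : p_n \text{ splits completely in } K\}.
\]
By Chebotarev, $T$ has positive relative density in $\bbN$. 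Moreover $g(n)\neq0$ for all but at most $d$ values of $n$.

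Second, dispose of the linear case. If $\beta\in\bbQ$, then $g(n)=an+b$ with $a\neq0$. By the prime number theorem (Chebyshev's bound suffices), $p_n\sim n\log n$, so $|an+b|<p_n$ for large $n$. Hence $p_n\mid an+b$ forces $an+b=0$, which can hold for at most one $n$. This contradicts the infinitude of $T$. The identical size argument rules out every situation in which the relevant integer has absolute value below $p_n$.

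Third, treat the general case $d\ge2$, which I expect to be the main obstacle. The difficulty is that $|g(n)|\asymp n^d$ is much larger than $p_n\asymp n\log n$, so divisibility is not excluded by size alone. Unlike in \cref{lem:LZ-criterion2}, the values $\pi(p)=n$ are all distinct, so the collision-counting argument gives nothing either. What I would try first is to shrink the integer below $p_n$. The set $\Lambda_{\mathfrak{P}}\coloneqq\{(x,y)\in\bbZ^2 : x+y\beta\in\mathfrak{P}\}$ is a lattice of index $p_n$ in $\bbZ^2$ (after adjusting $\beta$ to generate a ring in which $\mathfrak{P}$ has residue degree one), and it contains $(n,-1)$. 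Therefore $\Lambda_{\mathfrak{P}}=\{(x,y): x\equiv ny \pmod{p_n}\}$. If $\beta$ satisfies a relation $\beta^2=u\beta+v$, then $n^2-un-v\equiv0\pmod{p_n}$. More generally one could combine the congruences for $n$ and for $n+1$: the latter holds whenever $p_{n+1}$ is also in $T$, which happens for a positive proportion of $n$ after refining the Chebotarev set. The aim is to produce a nonzero integer of size $o(n\log n)$ that is divisible by $p_n$, and then conclude as in the linear case.

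If that fails, the fallback is to redo the counting of \cref{lem:LZ-criterion2} on the cofactors $m_n\coloneqq g(n)/p_n$. By the prime number theorem with error term, $p_n$ is determined up to $o(n/\log n)$ by an explicit smooth function of $n$. Hence $m_n$ lies in a short interval around $g(n)/(n\log n)$, and for large $n$ that interval should contain no integer $m$ with $m\mid g(n)$ and $g(n)/m$ prime. The result would be that only $o(X/\log X)$ of the $n\le X$ can belong to $T$, contradicting the positive density supplied by Chebotarev.
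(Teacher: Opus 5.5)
Your Step~2 is exactly the paper's concluding size argument. Step~3, however, is a genuine gap, and it is one you never need to face. The paper invokes \cite[Theorem~1.4]{Rosen2020}: any polynomial annihilating an element of $\calP^0_{\calA}$ has a rational root. This yields integers $a$ and $b>0$ with $b\pi(p)\equiv a\pmod{p}$ for infinitely many $p$, and the bound $0<b\pi(p)-a<p$ then gives the contradiction.

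The reason is already visible in your setup. In Rosen's Frobenius description, the residue at $p$ comes from a Galois-equivariant function evaluated at $\mathrm{Frob}_{\mathfrak P}$. Equivariance is precisely what makes that residue independent of the choice of $\mathfrak P\mid p$. So at completely split primes the congruence $\pi(p)\equiv\beta\pmod{\mathfrak P}$ holds for \emph{every} $\mathfrak P\mid p$, not just one. Applying it to $\tau\mathfrak P$ gives $\beta-\tau^{-1}\beta\in\mathfrak P$ for infinitely many $\mathfrak P$. Hence $\tau\beta=\beta$ for all $\tau\in\Gal(K/\bbQ)$, so $\beta\in\bbQ$. Equivalently, $\beta$ is the value at the trivial Frobenius, which is Galois-invariant. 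By weakening Rosen's condition to ``some $\mathfrak P\mid p$'' you discarded exactly the rigidity that forces $d=1$.

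As written, Step~3 does not close. A nonzero integer divisible by $p_n$ has absolute value at least $p_n$. So ``produce a nonzero integer of size $o(n\log n)$ divisible by $p_n$'' merely restates the desired contradiction, and neither of your mechanisms supplies one. Every nonzero element of $\Lambda_{\mathfrak P}$ has norm divisible by $p_n$, so the lattice cannot produce a small multiple. The congruence at $n+1$ is modulo the different prime $p_{n+1}$, so it says nothing about $p_n$.

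The fallback fails for a similar reason. Even under the Riemann hypothesis, $p_n$ is known only to within roughly $\sqrt{n}$. The window for $m_n=g(n)/p_n$ then has length about $n^{d-3/2}\to\infty$ once $d\ge 2$, so it contains many integers. Ruling out a prime cofactor for a positive proportion of $n$ would require equidistribution of the roots of $g$ modulo primes in a shrinking window near $0$. That is known only for quadratic $g$ (Duke--Friedlander--Iwaniec and T\'oth, as used in \cref{sec:pip}). Moreover, the density-zero statement your fallback needs for every irreducible $g$ of degree at least $3$ would, combined with the quadratic and linear cases, prove \cref{conj:pip}, which the paper leaves open.
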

\begin{proof}
Suppose that $\pi(\bsp)\in\mathcal{P}^0_{\mathcal{A}}$.
Then, since $\mathcal{P}^0_{\mathcal{A}}\subset \mathcal{C}_{\mathcal{A}}$, there exists a nonzero polynomial $f(x)$ with rational coefficients such that $f(\pi(\bsp))=0$.
However, by \cite[Theorem~1.4]{Rosen2020}, any such polynomial $f(x)$ must have a rational root.
It follows that there exist an integer $a$ and a positive integer $b$ such that $b\pi(p)-a\equiv 0 \pmod{p}$ holds for infinitely many primes $p$.
Since $\pi(p)\sim \frac{p}{\log p}$, we have $0<b\pi(p)-a<p$ for all sufficiently large $p$, which is a contradiction.
\end{proof}
Although we are not yet able to prove it, we conjecture the following.
\begin{conjecture}\label{conj:pip}
$\pi(\bsp)\in\calA\setminus\calC_{\calA}$.
\end{conjecture}
As evidence for this conjecture, we would like to point out that it can be derived conditionally.
An integer is said to be \emph{$y$-smooth} if all of its prime factors are at most $y$.
The following theorem is known concerning the smoothness of polynomial values.
\begin{theorem}[{Bober--Fretwell--Martin--Wooley \cite[Corollary~1.2]{BoberFretwellMartinWooley2020}}]\label{thm:smooth}
Let $f(x)$ be any quadratic polynomial with integer coefficients.
Then, for every $\varepsilon>0$, there exist infinitely many positive integers $n$ such that $f(n)$ is $n^{\varepsilon}$-smooth.
\end{theorem}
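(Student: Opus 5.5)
The plan is to exploit the multiplicative structure hidden in a quadratic polynomial: its values are, up to a bounded factor, norms from a quadratic field. The goal is to write $f(n)$ as a product of many values $f(m_i)$, or of linear forms, with every $m_i$ of size at most about $n^{\varepsilon}$. Then every prime factor of $f(n)$ is at most about $n^{2\varepsilon}$, and rescaling $\varepsilon$ finishes the argument.

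\textbf{Reduction to the monic case.} Write $f(x)=ax^2+bx+c$. Since $4af(x)=(2ax+b)^2-(b^2-4ac)$, it suffices to treat $g(y)=y^2-D$ along the arithmetic progression $y\equiv b \pmod{2a}$. The constant factor $4a$ only contributes bounded primes. If $D$ is a perfect square, $g$ splits into two linear factors, and we only need infinitely many $y$ with $y\pm\sqrt{D}$ both $y^{\varepsilon}$-smooth. I would get this from a polynomial identity of the type discussed next, or from known results on smooth shifted pairs. So the essential case is $D$ not a square.

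\textbf{Key identity.} Let $\alpha$ be a root of $g$, with conjugate $\bar\alpha$, so $g(y)=(y-\alpha)(y-\bar\alpha)$. Multiplying two such factors gives
\[
(x-\alpha)(y-\alpha)=(xy+D)-\alpha(x+y).
\]
Hence
\[
g(x)g(y)=N\bigl((xy+D)-\alpha(x+y)\bigr)=(x+y)^2\, g\!\left(\tfrac{xy+D}{x+y}\right),
\]
and whenever $(x+y)\mid(xy+D)$ this produces a new integer point $z=(xy+D)/(x+y)$ with $g(z)\mid g(x)g(y)$. I would iterate this in polynomial families. Starting from a parameter $t$, choose the pair $x=t$, $y=t+k$ (or more generally $y=h(t)$) so that the divisibility holds identically in $\mathbb{Z}[t]$. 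The simplest instance is the monic identity $g(t+g(t))=g(t)\,g(t+1)$, which already splits a value at a point of size about $t^2$ into two values at points of size about $t$.

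Iterating $r$ times in a tree-like fashion, I aim to build a polynomial $H_r(t)$ of degree $2^r$ such that $g(H_r(t))$ equals a product of $2^r$ values $g(t+c_j)$, possibly times squares of linear forms in $t$ with bounded shifts. Setting $n=H_r(t)$, every prime factor of $g(n)$ is then $\ll t^2\ll n^{2/2^r}$. Choosing $r$ with $2^{1-r}<\varepsilon$ and letting $t\to\infty$ through the required residue class gives infinitely many admissible $n$.

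\textbf{Main obstacle.} The difficult step is making the iteration close up. After one step, the identity $g(t+g(t))=g(t)g(t+1)$ involves the shifted factors $g(t)$ and $g(t+1)$, which are not all values of $g$ at a single common argument. To iterate, one must substitute a degree-$2^{r}$ polynomial so that \emph{each} factor again splits, while the divisibility $(x+y)\mid(xy+D)$ continues to hold identically. My first attempt would be to keep everything inside the norm group: write $H_r(t)-\alpha$ as a unit multiple of a product $\prod_j (t+c_j-\alpha)$ in $\mathbb{Z}[\alpha][t]$. This is done by choosing the shifts $c_j$ so that the $\alpha$-coefficient of the product is a constant, which gives a finite system of polynomial conditions on the $c_j$. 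If such a choice exists with bounded $c_j$, one may pass to a suitable scalar multiple or use a power of a fundamental unit. If the exact identity cannot be achieved, the fallback is a density argument. That would replace the exact identity by showing that, among $n\le X$ of the special shape, the cofactor left after extracting the polynomial factors is forced to be small. This quantitative version is where I expect most of the work to lie.
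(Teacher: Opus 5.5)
\textbf{There is a genuine gap: the proposal never constructs $H_r$, and that construction is the whole theorem.} The one identity you actually have, $g(t+g(t))=g(t)g(t+1)$, only gives $P(g(n))\ll n$. The step that should drive the exponent to $0$ is exactly the ``main obstacle'' you leave open, and the density fallback is not an argument.

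Worse, the target you set cannot be reached. Let $D$ be a nonsquare and suppose $g(H(t))=\lambda\prod_{j=1}^{k}g(t+c_j)$ with $\deg H=k\ge 3$ and $c_j\in\mathbb{Q}$.
\begin{enumerate}[label=(\alph*)]
\item $H-\alpha$ and $H+\alpha$ are coprime conjugates in $\mathbb{Q}(\alpha)[t]$. So neither can contain both factors $t+c_j\pm\alpha$ of a pair, since it would then be divisible by $g(t+c_j)\in\mathbb{Q}[t]$, and so would its conjugate. Hence $H-\alpha=h\prod_j(t+c_j+\epsilon_j\alpha)$ with $\epsilon_j=\pm1$ and $h\in\mathbb{Q}^\times$ the leading coefficient of $H$. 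In particular, passing to a unit multiple gains nothing.
\item Asking the $\alpha$-part to be constant means the power sums of the roots are rational up to order $k-1$. Expanding $(c+\alpha)^m-(c-\alpha)^m$, this is equivalent to $\sum_j\epsilon_jc_j^{i}=0$ for $0\le i\le k-2$.
\item So the multisets $\{c_j:\epsilon_j=1\}$ and $\{c_j:\epsilon_j=-1\}$ have the same size $k/2$ and the same power sums up to order $k-2\ge k/2$. Hence they coincide, and the product lies in $\mathbb{Q}[t]$ with no $\alpha$-part at all, a contradiction.
\end{enumerate}
Thus for $r\ge2$ no $H_r$ has $g(H_r)$ equal to a product of $2^r$ translates $g(t+c_j)$. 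Your ``first attempt'' with all factors $t+c_j-\alpha$, $c_j\in\mathbb{Z}$, already fails for $k=2$, since the $\alpha$-coefficient contains $-kt^{k-1}$. If you allow $c_j\in\mathbb{Z}[\alpha]$, you land in a Prouhet--Tarry--Escott-type system: a multiset in $\mathbb{Q}(\alpha)$ and its conjugate with equal power sums up to order $k-1$. You do not address its solvability for unbounded $k$.

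\textbf{Comparison with the paper, and the missing idea.} The paper gives no proof of this statement. It quotes it from \cite{BoberFretwellMartinWooley2020}, where it is deduced from a polysmoothness theorem: auxiliary polynomials $g$ for which $f(g(t))$ factors into pieces of small degree \emph{relative to} $\deg g$. That relative condition is what you are missing, since bounded-degree pieces are not needed. Cyclotomic factorization, for instance, gives it directly. For odd $r$ and $y=D^{(r+1)/2}t^r$,
\[
y^2-D=D\prod_{d\mid r}\Phi_d(Dt^2),
\]
so every prime factor is $\ll t^{2\varphi(r)}\ll|y|^{2\varphi(r)/r}$. Taking $r$ to be the product of the first $N$ odd primes with $N$ large makes $\varphi(r)/r$ arbitrarily small; this also covers square $D$.

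Even so, your final step, ``letting $t\to\infty$ through the required residue class'', is not automatic, because the family must meet $y\equiv b\pmod{2a}$. For $f(x)=27x^2+9x+1$ one needs $y=54x+9$, whose $3$-adic valuation is exactly $2$. But $(-27)^{(r+1)/2}t^r$ has valuation at least $3(r+1)/2$ for every integer $t$. So the identity must be adapted to the progression, which needs a separate local argument.
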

In fact, they derive this theorem by proving a somewhat stronger theorem concerning \emph{polysmoothness} (see \cite[Theorem~1.1]{BoberFretwellMartinWooley2020}).
A generalization of this theorem has not yet been obtained:
\begin{question}[cf.~\cite{BoberFretwellMartinWooley2020}, \cite{Martin02}]\label{que:smooth}
Does \cref{thm:smooth} remain true if $f(x)$ is replaced by an arbitrary nonconstant polynomial in one variable with integer coefficients?
\end{question}
\begin{proposition}\label{prop:condition_pip}
If the answer to \cref{que:smooth} is affirmative, then \cref{conj:pip} is true.
\end{proposition}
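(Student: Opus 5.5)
Suppose, for contradiction, that $\pi(\bsp)\in\calC_{\calA}$. Then there is a nonzero $f(x)\in\bbZ[x]$ with $f(\pi(\bsp))=0$, that is, $p_n\mid f(n)$ for all sufficiently large $n$. The plan is to show that $f(n)$ must then have a prime factor of size about $n$ for every large $n$, and that an affirmative answer to \cref{que:smooth} supplies infinitely many $n$ for which this fails.

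First, reduce to the case where $f$ has no integer roots. Write $f(x)=g(x)\prod_{r\in R}(x-r)^{e_r}$, where $R$ is the finite set of integer roots of $f$ and $g\in\bbZ[x]$ has no integer root. For large $n$, each factor $n-r$ is a nonzero integer with $0<|n-r|<p_n$, because $p_n\sim n\log n>n+|r|$. Since $p_n$ is prime and $p_n\mid f(n)$, it cannot divide any $n-r$, so $p_n\mid g(n)$. Here $g$ is nonconstant: if $g$ were a nonzero constant, $p_n\mid g$ would fail for large $n$. Thus we may assume $f$ is nonconstant with $f(n)\neq 0$ for all integers $n$, and $p_n\mid f(n)$ for all large $n$. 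Replacing $f$ by $-f$ if needed, we may take the leading coefficient positive, so the positive integers $n$ in the smoothness statement are the relevant ones.

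Next, apply the hypothesis. Take $\varepsilon=1/2$. By the affirmative answer to \cref{que:smooth}, there are infinitely many positive integers $n$ such that $f(n)$ is $n^{1/2}$-smooth, so every prime factor of $f(n)$ is at most $n^{1/2}$. But $p_n\mid f(n)$ and $p_n\geq n>n^{1/2}$ for $n\geq 2$, a contradiction for large such $n$. Hence $\pi(\bsp)\notin\calC_{\calA}$.

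The only real work is the reduction step: we need $f(n)\neq0$ and the primality of $p_n$ to pass the divisibility from $f$ to $g$. That step uses nothing beyond $p_n>n+C$ for large $n$, which follows from the prime number theorem already used in \cref{prop:pip}, or even from elementary bounds. Any nonconstant polynomial suffices, so \cref{thm:smooth} alone handles the case $\deg g\le 2$; the general case is exactly where \cref{que:smooth} is needed, and otherwise the argument is routine.
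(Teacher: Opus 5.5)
Your proof is correct and follows the same route as the paper. The paper argues that $p_n\mid f(n)$ for all large $n$, together with $p_n>n$ (it cites $p_n\sim n\log n$), rules out $f(n)$ being $n$-smooth for all but finitely many $n$, contradicting an affirmative answer to \cref{que:smooth}. Your reduction removing integer roots only makes explicit the point that $f(n)\neq 0$ for large $n$, which the paper leaves implicit. Your choice of $\varepsilon=1/2$ instead of the paper's $n$-smoothness makes no difference.
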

\begin{proof}
Let $f(x)$ be a nonconstant polynomial with integer coefficients.
Suppose that $f(n)\equiv 0\pmod{p_n}$ holds for all sufficiently large integer $n$.
Since $p_n\sim n\log n$, it would follow that there exist at most finitely many positive integers $n$ such that $f(n)$ is $n$-smooth.
\end{proof}
Moreover, by using the following result on the equidistribution of roots of quadratic polynomials, one can obtain an unconditional result concerning \cref{conj:pip}.
\begin{theorem}[Duke--Friedlander--Iwaniec \cite{DukeFriedlanderIwaniec1995} and T\'oth \cite{Toth2000}]\label{thm:equidistribution}
Let $f(x)$ be a polynomial with integer coefficients, and let $\alpha$ and $\beta$ satisfy $0\leq \alpha< \beta \le 1$.
Define
\[
R_f(X;\alpha,\beta)\coloneqq\{(p,\nu) : p\leq X,\ f(\nu)\equiv 0\ (\mathrm{mod}\ p),\ \alpha\leq \nu/p<\beta\},
\]
where $p$ is a prime and $\nu$ is an integer satisfying $0\leq \nu \le p-1$.
If $f(x)$ is an irreducible quadratic polynomial, then we have
\[
\#R_f(X;\alpha,\beta)\sim(\beta-\alpha)\pi(X)
\]
as $X\to\infty$.
\end{theorem}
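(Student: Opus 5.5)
The statement is the equidistribution theorem of Duke, Friedlander, Iwaniec and T\'oth. It is a deep result that the paper only cites, so what follows outlines the known strategy rather than a self-contained argument. Write $f(x)=ax^2+bx+c$ with discriminant $D=b^2-4ac$, which is not a square since $f$ is irreducible. For all but finitely many primes $p$, the number of roots $\nu \bmod p$ equals $1+\left(\frac{D}{p}\right)$. Hence
\[
\sum_{p\le X}\#\{\nu \bmod p: f(\nu)\equiv 0 \pmod{p}\}\sim \pi(X)
\]
by the prime number theorem in arithmetic progressions. By Weyl's criterion, together with a standard approximation of the indicator of $[\alpha,\beta)$ by trigonometric polynomials, the claim reduces to showing that for each fixed integer $h\neq 0$,
\[
W_h(X)\coloneqq\sum_{p\le X}\ \sum_{\substack{\nu \bmod p\\ f(\nu)\equiv 0 \pmod{p}}} e\!\left(\frac{h\nu}{p}\right)=o(\pi(X)).
\]

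\textbf{Key steps.} The first step is to parametrize the roots by quadratic forms, following Gauss. A root $\nu$ of $f$ modulo $m$ determines a form $[m,\,2a\nu+b,\,\ast]$ of discriminant $D$ (after normalizing by $a$). Conversely, the roots modulo $m$, taken over all $m$, correspond to primitive representations $m=Q(r,s)$ by the finitely many $\mathrm{SL}_2(\bbZ)$-classes of forms of discriminant $D$. Under this correspondence $\nu/m$ is, up to $O(1/m^2)$-type errors, a fraction $\bar{r}s/m$ determined by the representing vector. The second step is to sum over all moduli $m$ (not only primes), weighted smoothly. The Weyl sum then becomes a sum of $e(h\cdot\text{(matrix entry ratio)})$ over a $\Gamma$-orbit, which can be expressed through a Poincar\'e series or Kloosterman sums. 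For $D<0$ (Duke--Friedlander--Iwaniec) one uses the spectral theory of automorphic forms, namely the Kuznetsov formula and bounds toward Ramanujan--Selberg, to obtain a power saving for these Type I sums, including the version restricted to moduli $m\equiv 0 \pmod{d}$ with $d$ up to a small power of $X$. For $D>0$ (T\'oth) the orbit is infinite under the unit group, so one truncates along a fundamental domain of the automorphism group of the form; this is the same mechanism with extra bookkeeping.

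The third step is to pass from all moduli to primes with a combinatorial sieve: Vaughan's identity or a Harman-type sieve, or the asymptotic sieve for primes with a parity-breaking input. This splits $W_h(X)$ into Type I sums, which are handled by step two, and bilinear Type II sums
\[
\sum_{m\sim M}\sum_{n\sim N}\alpha_m\beta_n\sum_{\nu:\, f(\nu)\equiv 0\ (mn)} e\!\left(\frac{h\nu}{mn}\right).
\]
To treat these, factor roots modulo $mn$ via the Chinese remainder theorem, which corresponds to composition of forms. Then apply Cauchy--Schwarz to remove $\beta_n$ and reduce to sums over pairs of lattice points, whose spacing is controlled by the discriminant.

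\textbf{Main obstacle.} I expect the Type II estimate in step three to be the crux. One needs a saving over the trivial bound in a range of $M,N$ wide enough to close the sieve, and because of the parity problem, Type I information alone cannot detect primes. If a direct bilinear bound proves too weak, the first thing I would try is the DFI approach: exploit the special multiplicative structure of roots, where the CRT composition is itself a quadratic form, to get the needed cancellation from lattice-point counting in the Cauchy--Schwarz dual sum. Since the paper assumes this theorem, it is invoked as stated.
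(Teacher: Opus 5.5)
This matches the paper, which gives no proof of this statement and simply quotes Duke--Friedlander--Iwaniec (for $D<0$) and T\'oth (for $D>0$); your one concrete step, that $\sum_{p\le X}\bigl(1+\bigl(\frac{D}{p}\bigr)\bigr)\sim\pi(X)$ because $D$ is not a square (which turns their equidistribution statement into the normalization $(\beta-\alpha)\pi(X)$), is correct. Your outline of their method is fair in broad strokes, with one imprecision: the form attached to a root $\nu$ modulo $m$ is $[am,\,2a\nu+b,\,f(\nu)/m]$, and for $D<0$ the useful fact is not a congruence $\nu\equiv\bar r s \pmod{m}$ but that $-\nu/m-b/(2am)$ is exactly the real part of this form's Heegner point $\gamma z_Q$, of height $\asymp 1/m$, which is what turns the Weyl sum into a Poincar\'e series.
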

\begin{theorem}
Let $f(x)$ be a nonzero polynomial with integer coefficients such that every irreducible factor of $f(x)$ has degree at most two.
Then $f(\pi(\bsp))\neq 0$.
\end{theorem}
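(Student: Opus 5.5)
We argue by contradiction. Suppose $f(\pi(\bsp))=0$, so $f(n)\equiv 0\pmod{p_n}$ for all sufficiently large $n$. Write $f=c\prod_i f_i$ with each $f_i$ irreducible over $\bbQ$ of degree one or two. For every large $n$ we have $p_n\nmid c$, so some factor satisfies $f_i(n)\equiv0\pmod{p_n}$. Since $f$ has finitely many factors, at least one of them, call it $g$, satisfies $g(n)\equiv 0\pmod{p_n}$ for a set of $n$ containing a positive proportion of $n\le N$; more precisely, a positive proportion of primes $p\le X$ satisfy $g(\pi(p))\equiv 0\pmod p$. We then rule out each possible degree of $g$.

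\emph{Degree one.} Write $g(x)=bx-a$ with $b>0$. Exactly as in the proof of \cref{prop:pip}, the prime number theorem gives $0<b\pi(p)-a<p$ for large $p$, so $g(\pi(p))\equiv0\pmod p$ holds for only finitely many $p$. This case is impossible.

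\emph{Degree two.} Here $g$ is irreducible quadratic, and we use \cref{thm:equidistribution}. The key observation is that $\nu=\pi(p)$ is a root of $g$ modulo $p$ with $\nu/p=\pi(p)/p\sim 1/\log p\to0$. Fix a small $\beta>0$. For all large $p$, the condition $g(\pi(p))\equiv0\pmod p$ produces a pair $(p,\pi(p))\in R_g(X;0,\beta)$, and distinct $p$ give distinct pairs. Hence
\[
\#R_g(X;0,\beta)\ge \#\{p\le X: g(\pi(p))\equiv 0 \ (\mathrm{mod}\ p)\}-O(1)\ge \kappa\,\pi(X)
\]
for some constant $\kappa>0$ that comes from the positive proportion and does not depend on $\beta$. \cref{thm:equidistribution} says that the left-hand side is $\sim\beta\pi(X)$. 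Choosing $\beta<\kappa$ gives a contradiction.

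\emph{Main obstacle.} The delicate step is producing a uniform $\kappa$. Pigeonholing over the factors gives a single factor that works for a lower density of primes at least $1/(\text{number of factors})$, but in lower-density form rather than along all $X$. To avoid this, apply the argument to all factors simultaneously. Let $g_1,\dots,g_s$ be the distinct quadratic factors. Every large prime $p$ satisfies $g_i(\pi(p))\equiv0\pmod p$ for some linear or quadratic factor, and the linear factors account for only $O(1)$ primes. Therefore
\[
\sum_{i=1}^s\#R_{g_i}(X;0,\beta)\ge \pi(X)-O(1).
\]
By \cref{thm:equidistribution}, the left-hand side is $\sim s\beta\,\pi(X)$. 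Taking $\beta<1/s$ yields the contradiction for all large $X$, with no pigeonholing needed. Repeated factors cause no trouble, since we only need each distinct irreducible factor once.
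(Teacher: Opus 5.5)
Your proof is correct and is essentially the paper's argument. The final union bound over all distinct factors matches the paper's decomposition of $S_f(X)$: linear factors contribute only $O(1)$ primes as in \cref{prop:pip}, and each irreducible quadratic $g_i$ contributes at most $\#R_{g_i}(X;0,\beta)+O(1)\sim\beta\pi(X)$ because $\pi(p)/p\to0$, with your choice $\beta<1/s$ in place of the paper's letting $\varepsilon\to0$. The pigeonhole step in your opening paragraph only yields a positive proportion along a subsequence of $X$ and is superseded by that union bound, so you can drop it.
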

\begin{proof}
Assume that $f(\pi(\bsp))=0$.
Let $X$ be a sufficiently large real number.
Let $l_1(x),\dots,l_a(x)$ be the distinct linear factors of $f(x)$, and let $q_1(x),\dots,q_b(x)$ be the distinct irreducible quadratic factors of $f(x)$.
For a polynomial $g(x)$, define $S_g(X)\coloneqq\{n\leq \pi(X) : p_n\mid g(n)\}$.
Then
\begin{equation}\label{eq:union}
S_f(X)=\bigcup_{i=1}^a S_{l_i}(X)\cup\bigcup_{j=1}^b S_{q_j}(X).
\end{equation}
By the assumption, $S_f(X)$ is in one-to-one correspondence, up to finitely many exceptions independent of $X$, with the set of primes not exceeding $X$.
Therefore,
\begin{equation}\label{eq:S-pi(X)}
\#S_f(X)\sim \pi(X).
\end{equation}
By the same argument as in the proof of \cref{prop:pip}, we have $\#S_{l_i}(X)=O(1)$.
Let $\varepsilon>0$ be small.
Since $\displaystyle\lim_{n\to\infty}n/p_n=0$, if $n\in S_{q_j}(X)$ is sufficiently large, then $(p_n,n)\in R_{q_j}(X;0,\varepsilon)$.
Hence
\[
\#S_{q_j}(X)\leq \#R_{q_j}(X;0,\varepsilon)+O(1).
\]
By \cref{thm:equidistribution}, we have $\#R_{q_j}(X;0,\varepsilon)\sim \varepsilon\pi(X)$, and therefore, by the arbitrariness of $\varepsilon$, it follows that $\#S_{q_j}(X)=o(\pi(X))$.
It then follows from \eqref{eq:union} that
\[
\#S_f(X)=o(\pi(X)),
\]
which contradicts \eqref{eq:S-pi(X)}.
\end{proof}
If an equidistribution result analogous to \cref{thm:equidistribution} could also be established for all irreducible polynomials of degree at least three, then \cref{conj:pip} would be resolved.
Both this plan and the argument of \cref{prop:condition_pip} rely only on the asymptotic behavior of $p_n$.
It would be interesting if \cref{conj:pip} could be proved by making use of more specific properties of $p_n$.
\section{Algebraic independence}\label{sec:alg_indep}
In this section, we discuss algebraic independence over $\bbQ$ of two elements of $\calA$.
Here, for $\alpha,\beta\in\calA$, we say that $\alpha$ and $\beta$ are \emph{algebraically independent over $\bbQ$} if, for every nonzero polynomial $F(x,y)$ in two variables with integer coefficients, one has $F(\alpha,\beta)\neq 0$ in $\calA$. 
 
\begin{lemma}
If integer sequences $(a_p)_p$ and $(b_p)_p$ satisfy $a_p\to\infty$, $b_p\to\infty$, $a_p^d=o(p)$, and $b_p^d=o(a_p)$ as $p\to\infty$ for every positive integer $d$, then $(a_p\bmod{p})_p$ and $(b_p\bmod{p})$ are algebraically independent.
\end{lemma}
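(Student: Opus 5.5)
The plan is to mimic the proof of \cref{lem:old}, with an extra step to separate the two variables. Suppose $F(x,y)\in\bbZ[x,y]$ is nonzero and $F(a_p,b_p)\equiv 0\pmod{p}$ for all sufficiently large $p$. Let $D$ be the total degree of $F$. Since $a_p\to\infty$ and $b_p^d=o(a_p)$, we have $b_p\le a_p$ for large $p$. Hence every monomial satisfies $|a_p^ib_p^j|\le a_p^{D}$, and so $|F(a_p,b_p)|\ll a_p^D=o(p)$. For large $p$ this gives $|F(a_p,b_p)|<p$, and the congruence forces the exact identity $F(a_p,b_p)=0$ for all sufficiently large $p$.

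Next I will show that $F(a_p,b_p)\neq0$ for large $p$, which is a contradiction. Write
\[
F(x,y)=\sum_{i=0}^{m}g_i(y)x^i \qquad\text{with } g_m\neq 0 \text{ in } \bbZ[y].
\]
Since $b_p\to\infty$ and $g_m$ has only finitely many roots, $g_m(b_p)$ is a nonzero integer for large $p$, so $|g_m(b_p)|\ge1$. Let $e$ be the maximal degree of the $g_i$. Then $|g_i(b_p)|\ll b_p^{e}$ for every $i$.

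If $m=0$, then $F(a_p,b_p)=g_0(b_p)\ne0$ for large $p$, and we are done. If $m\ge1$, I estimate
\[
|F(a_p,b_p)|\ge a_p^m-\sum_{i<m}|g_i(b_p)|a_p^i\ge a_p^m - C\,b_p^{e}\,a_p^{m-1}.
\]
Because $b_p^e=o(a_p)$, the subtracted term is $o(a_p^m)$. Therefore $|F(a_p,b_p)|\ge a_p^m/2>0$ for large $p$, contradicting $F(a_p,b_p)=0$.

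The only delicate point is this dominance step: the leading coefficient $g_m(b_p)$ must be bounded below, which uses integrality and $b_p\to\infty$, while the lower-order coefficients are polynomially bounded in $b_p$, which the hypothesis $b_p^d=o(a_p)$ exactly absorbs. Everything else is routine.
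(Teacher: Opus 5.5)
Your proof is correct and follows essentially the same route as the paper: expand $F$ in powers of $x$ over $\bbZ[y]$, use $b_p\to\infty$ to make the leading coefficient a nonzero integer and $b_p^e=o(a_p)$ to make the lower-order terms $o(a_p^m)$, and combine this with $|F(a_p,b_p)|=o(p)$. The only cosmetic difference is in the upper bound: you use $b_p\le a_p$ and the total degree to get $|F(a_p,b_p)|\ll a_p^{D}$, while the paper uses $F(a_p,b_p)=O(b_p^k a_p^d)$; both give $o(p)$.
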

\begin{proof}
Let $F(x,y)\in\bbZ[x,y]$ be a nonzero polynomial, and suppose that $F(a_p,b_p)\equiv 0\pmod{p}$ for sufficiently large primes $p$.
Write
\[
F(x,y)=\sum_{i=0}^dg_i(y)x^i,
\]
where $g_i(y)\in\bbZ[y]$ for $0\leq i\leq d$ and $g_d(y)$ is a nonzero polynomial.
Since $b_p\to\infty$, $g_d(b_p)\neq0$ for sufficiently large primes $p$.
In particular, $|g_d(b_p)a_p^d|\geq a_p^d$.
Let $\displaystyle k\coloneqq\max_{0\leq i\leq d}\deg g_i(y)$.
Since $g_i(n)=O(n^k)$ for every $0\leq i\leq d$ and every positive integer $n$, we have
\[
\sum_{i=0}^{d-1}g_i(b_p)a_p^i=O(b_p^ka_p^{d-1})=o(a_p^d)
\]
by $b_p^k=o(a_p)$.
Therefore, $F(a_p,b_p)\neq 0$ for sufficiently large primes $p$.
On the other hand, since $F(a_p,b_p)=O(b_p^ka_p^d)=o(p)$, we have $|F(a_p,b_p)|<p$ for sufficiently large primes $p$.
Thus, we have a contradiction.
\end{proof}
\begin{example}
$(\lfloor\log p\rfloor\bmod{p})_p$ and $(\lfloor\log\log p\rfloor\bmod{p})_p$ are algebraically independent.
\end{example}

\begin{lemma}
Let $\alpha=(a_p)_p\in\calA$ be an element satisfying the conditions of \cref{lem:AnzawaFunakura}.
That is, there exists a strictly increasing sequence of positive integers $(a'_n)_n$ such that, for every $n$, there exist infinitely many primes $p$ such that $a_p = a'_n \bmod{p}$. 
Let $\beta=(b_p)_p\in\calA$ be an element satisfying the conditions of \cref{lem:old}.
That is, there exists a sequence of integers $(b'_p)_p$ such that $b'_p \to \infty$ and $(b'_p)^d = o(p)$ as $p \to \infty$ for every positive integer $d$, and $b_p = b'_p \bmod{p}$.
Then $\alpha$ and $\beta$ are algebraically independent.
\end{lemma}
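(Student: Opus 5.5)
Suppose $F(x,y)\in\bbZ[x,y]$ is nonzero with $F(\alpha,\beta)=0$. Then $F(a_p,b'_p)\equiv0\pmod{p}$ for all sufficiently large primes $p$. The plan is to use the infinitely many primes attached to each fixed value $a'_n$: on those primes, $\alpha$ becomes a fixed integer, and then the one-variable argument of \cref{lem:old} applies to $\beta$.

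Fix $n$ and let $S_n$ be the infinite set of primes with $a_p=a'_n\bmod{p}$. For $p\in S_n$ large we get
\[
g_n(b'_p)\equiv0\pmod{p},\qquad g_n(y)\coloneqq F(a'_n,y)\in\bbZ[y].
\]
If $g_n$ is a nonzero polynomial, we argue as in the proof of \cref{lem:old}. If $\deg g_n\ge1$, then $g_n(b'_p)=O((b'_p)^{\deg g_n})=o(p)$, and $g_n(b'_p)\neq0$ for large $p$ because $b'_p\to\infty$. This is a contradiction along $p\in S_n$. If $g_n$ is a nonzero constant, then $p$ divides a fixed nonzero integer for infinitely many $p$, which is also impossible. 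Only the growth of $b'_p$ along the infinite subsequence $S_n$ is used, and the hypothesis on $(b'_p)_p$ holds along any subsequence.

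Hence $g_n\equiv0$, that is, $F(a'_n,y)=0$ identically in $y$ for every $n$. Write $F(x,y)=\sum_j h_j(x)y^j$ with $h_j\in\bbZ[x]$. Then each $h_j$ vanishes at all $a'_n$. These are infinitely many distinct integers because $(a'_n)_n$ is strictly increasing, so every $h_j=0$ and $F=0$, a contradiction.

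I expect no serious obstacle. The only care needed is the separate treatment of the constant and nonconstant cases of $g_n$, and noting that the hypothesis on $(b'_p)_p$ survives restriction to $S_n$.
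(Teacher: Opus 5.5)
Your proof is correct and follows essentially the same route as the paper. In both, you restrict to the infinitely many primes where $\alpha$ equals a fixed $a'_n$, then apply the size argument of \cref{lem:old} to the one-variable polynomial $F(a'_n,y)$ evaluated at $b'_p$. The only cosmetic difference is how the existence of a good $n$ is secured: the paper picks an $n$ with $F(a'_n,y)\not\equiv 0$ by noting that vanishing forces the leading coefficient $f_d$ to vanish at $a'_n$, whereas you argue by contraposition that every coefficient $h_j$ would vanish at infinitely many integers.
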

\begin{proof}
Let $F(x,y)\in\bbZ[x,y]$ be a nonzero polynomial, and suppose that $F(\alpha,\beta)=0$.
Write
\[
F(x,y)=\sum_{i=0}^d f_i(x)y^i,
\]
where $f_i(x)\in\bbZ[x]$ for $0\leq i\leq d$ and $f_d(x)$ is a nonzero polynomial.
For an integer $m$, if $F(m,y)$ is the zero polynomial in $y$, then it must be the case that $f_d(m)=0$.
It follows that there are only finitely many integers $m$ for which $F(m,y)$ is the zero polynomial in $y$.
Therefore, for some $n$, the polynomial $g(y) \coloneqq F(a'_n,y)$ is not the zero polynomial in $y$.
In this case, by the assumption, $g(b'_p)\equiv 0\pmod{p}$ holds for infinitely many primes $p$.
From the assumption on $\beta$, it follows that for such primes $p$, if $p$ is sufficiently large, then $0<|g(b'_p)|<p$, which leads to a contradiction.
\end{proof}
\begin{example}
Anzawa--Funakura's example $(t_{\pi(p)}\bmod{p})_p$ with $(t_n)_n=(1,1,2,1,2,3,1,2,3,4,\dots)$ and $(\lfloor\log p\rfloor\bmod p)_p$ are algebraically independent.
\end{example}

It is also of interest to study algebraic independence for two numbers defined arithmetically.
For example, given two elliptic curves $E_1$ and $E_2$ over $\bbQ$, we may ask whether $\alpha(E_1)$ and $\alpha(E_2)$ are algebraically independent.

By Faltings' theorem, the condition that $E_1$ and $E_2$ are $\bbQ$-isogenous is equivalent to the equality $\alpha(E_1)=\alpha(E_2)$.
Moreover, if $E_2$ is a quadratic twist of $E_1$, then $\alpha(E_1)^2=\alpha(E_2)^2$ holds.
However, apart from such cases of obvious algebraic dependence, it is natural to ask whether $\alpha(E_1)$ and $\alpha(E_2)$ are algebraically independent.
To this end, let us recall the proof that $\alpha(E)$ is transcendental in the naive sense.
The key ingredients in that proof were the counting argument in the proof of Luca--Zudilin's second criterion and the Sato--Tate distribution.

As for the latter ingredient, in order to discuss algebraic independence, it would likely be enough to have the so-called \emph{joint Sato--Tate distribution}, that is, the equidistribution of $(a_p(E_1),a_p(E_2))$ with respect to the product Sato--Tate measure.
Results in this direction have been obtained by Harris~\cite{Harris2009} and Wong~\cite{Wong2019}.

On the other hand, difficulties arise when we try to extend the first ingredient to an argument for algebraic independence.
Indeed, in counting the number of integers that arise in the proof of \cref{lem:LZ-criterion2}, the argument relied on the fact that $a_p(E)$ is of size $O(p^{\frac12})$ by Hasse's theorem, and it worked precisely because $\frac12<1$.
However, if we try to extend this argument naively to pairs, this leads to about $O(p^{\frac12})\times O(p^{\frac12})=O(p)$ possible values, which is too large for the argument to work.

If we restrict to linear polynomials $F(x,y)$, then $F(a_p(E_1), a_p(E_2))$ remains of size $O(p^{\frac12})$, and equidistribution implies its nonvanishing for infinitely many $p$.
In particular, this yields the linear independence of $\alpha(E_1)$ and $\alpha(E_2)$ over $\bbQ$.
\section{Irrationality of values of the logarithmic function}\label{sec:log}
Assuming a certain kind of uniform distribution for $q_p(2)\bmod{p}$, one is led to conjecture that there exist infinitely many \emph{Wieferich primes}, that is, primes $p$ satisfying $q_p(2)\equiv 0\pmod{p}$.
However, only two Wieferich primes, namely $1093$ and $3511$, are currently known, and at present there seems to be no prospect of proving this conjecture.
In the language of $\calA$, this means that $\log_{\calA}(2)$ is conjectured to be a zero divisor.
Combined with \cref{thm:Silverman-0}, this in turn suggests that $\log_{\calA}(2)$ is conjectured to be a nonzero zero divisor, and hence, of course, to be irrational.
Although we are not able to prove that $\log_{\calA}(2)$ is a zero divisor in this paper, we do prove, under the ABC conjecture of Masser and Oesterl\'e, that $\log_{\calA}(2)$ is irrational.
In fact, we prove the same kind of result for $\alpha$-Wieferich primes to bases other than $2$ as well.
It seems unlikely that the criteria in \cref{sec:examples} can be applied to $\log_{\calA}(\alpha)$.
\begin{theorem}\label{thm:unconditional_Silverman}
Let $\alpha\not\in\{+1,-1\}$ be a nonzero rational number.
Then we have $\log_\calA(\alpha)\in\calA\setminus\bbQ^{\times}$.
\end{theorem}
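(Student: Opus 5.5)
We argue by contradiction. Suppose $\log_\calA(\alpha)=c/d$ with $c,d$ coprime nonzero integers, and write $\alpha=u/v$ with $u,v$ coprime integers, $v>0$ and $|u|\ne v$ (this is where $\alpha\ne\pm1$ enters). The hypothesis says that for every sufficiently large prime $p$,
\[
d\,\frac{u^{p-1}-v^{p-1}}{p}\equiv c\,v^{p-1}\pmod p .
\]
Since the unconditional statement only excludes \emph{nonzero} rationals, unlike \cref{thm:Silverman-0}, the plan is to exploit that $c\neq0$. This rules out the relation at primes where the Fermat quotient is forced to be $\equiv 0$ or forced to be small. It does not require producing Wieferich primes.

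\textbf{Step 1 (choice of primes).} For a prime $\ell$, let $p$ be a primitive prime divisor of $u^\ell-v^\ell$. Such divisors exist for all large $\ell$ by Zsigmondy's theorem (Bang--Zsigmondy for $u^\ell-v^\ell$ with $|u|\neq v$). Then $\ord_p(\alpha)=\ell$ and $p\equiv1\pmod{\ell}$.

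\textbf{Step 2 (rewrite the Fermat quotient).} Write $\alpha^\ell=1+pt$ in $\bbZ_{(p)}$. Raising to the power $(p-1)/\ell$ gives
\[
q_p(\alpha)\equiv \frac{p-1}{\ell}\,t\equiv -\frac{t}{\ell}\pmod p .
\]
Thus the relation becomes
\[
d\,\frac{u^\ell-v^\ell}{p}+c\,\ell\, v^\ell\equiv 0\pmod p .
\]
When $p\,\|\,u^\ell-v^\ell$, this is a congruence between the integers $d(u^\ell-v^\ell)/p$ and $-c\ell v^\ell$. When $p^2\mid u^\ell-v^\ell$ it forces $p\mid c\ell v^\ell$, which is impossible for large $p$ because $c\ne0$ and $p\equiv 1 \pmod{\ell}$ gives $p>\ell$. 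So only the exact-divisibility case needs work.

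\textbf{Step 3 (size argument).} Apply the congruence simultaneously to all primitive prime divisors $p_1,\dots,p_s$ of $\Phi_\ell(u,v)$ that exactly divide it. Multiplying through by the complementary factors gives that the product $P=\prod p_i$ divides an integer of the shape $d\,(u^\ell-v^\ell)/P+c\,\ell\,v^\ell\cdot(\text{unit mod }P)$. The aim is to show this integer is nonzero and of absolute value $<P$ once $P$ is a large part of $u^\ell-v^\ell$. The nonvanishing is where $c\neq0$ is used. Alternatively, choose $\ell$ so that some single primitive divisor $p$ exceeds $\max(|u|,v)^{\ell/2}$ roughly, so that both $d(u^\ell-v^\ell)/p$ and $c\ell v^\ell$ are $O(p)$ with explicit constants; the congruence then pins the sum down to finitely many multiples of $p$, and one excludes those.

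\textbf{Main obstacle.} The hard step is Step 3: unconditionally one does not control how large the primitive part or its largest prime factor is. I would try first to exploit the freedom in $\ell$ together with multiplicativity, $\log_\calA(\alpha^k)=k\log_\calA(\alpha)$, which lets one replace $c/d$ by $kc/d$ and $\alpha$ by $\alpha^k$. This gives two different congruences at the same prime $p$ whose difference should force $p\mid c\cdot(\text{small nonzero integer})$ and hence a contradiction. If that fails, I would fall back on counting primes $p\le X$ with $\ord_p(\alpha)$ small, in the style of $P_1(X)$ in \cref{lem:LZ-criterion1}, to find primes where the rewritten congruence is incompatible with $c\ne0$.
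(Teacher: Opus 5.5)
Your Steps 1 and 2 agree with the paper's setup. The paper also takes a large prime $\ell$, works with the prime divisors of $\Phi_\ell(u,v)=(u^\ell-v^\ell)/(u-v)$, derives $d(u-v)\Phi_\ell(u,v)/p+c\ell v^\ell\equiv 0\pmod p$, and uses $c\neq0$ exactly as you do to exclude $p^2\mid\Phi_\ell(u,v)$. But Step 3, which you flag yourself, is missing, and the routes you suggest would not supply it.

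Your CRT gluing $P\mid dM+c\ell v^\ell w$, with $w$ a unit modulo $P$, discards all size information. The unit $w$ is only determined modulo $P$ and is typically of size comparable to $P$. The multiplicativity trick is vacuous: $q_p(\alpha^k)\equiv kq_p(\alpha)\pmod p$ holds identically, so the congruence for $(\alpha^k,kc/d)$ follows formally from the one for $(\alpha,c/d)$, and their difference is $0\equiv0$. A prime factor of $u^\ell-v^\ell$ of size $\max(|u|,v)^{\ell/2}$ is far beyond anything known unconditionally.

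Three ideas are missing.

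\textbf{All of $\Phi_\ell(u,v)$ is primitive.} After reducing to $\alpha=u/v>1$ (via $\log_\calA(-\alpha)=\log_\calA(\alpha)=-\log_\calA(\alpha^{-1})$), take $\ell>u$. Any prime dividing both $u-v$ and $\Phi_\ell(u,v)$ divides $\ell v^{\ell-1}$, which is impossible. So every prime factor of $\Phi_\ell(u,v)$ has order exactly $\ell$, hence is $\equiv1\pmod\ell$ and exceeds $\ell$. Combined with your $p^2$ argument, $\Phi_\ell(u,v)=p_1\cdots p_r$ is squarefree and entirely primitive. So you need no control on the size of a primitive part.

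\textbf{Glue additively.} Set $T_\ell=d(u-v)\sum_i\Phi_\ell(u,v)/p_i+c\ell v^\ell$. Since $p_i\mid\Phi_\ell(u,v)/p_j$ for $j\neq i$, $T_\ell$ is congruent modulo $p_i$ to the $i$th congruence, so $\Phi_\ell(u,v)\mid T_\ell$. The size is controlled because the $p_i$ are distinct and $\equiv1\pmod\ell$, with $r<\ell$:
\[
\sum_i\frac1{p_i}\le\sum_{j\le r}\frac1{\ell j+1}<\frac{1+\log\ell}{\ell}.
\]
Also $\ell v^\ell=o(\Phi_\ell(u,v))$. Hence $|T_\ell|<\Phi_\ell(u,v)$, and therefore $T_\ell=0$.

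\textbf{Reduce modulo $\ell$.} The final contradiction does not come from $c\neq0$, which was already spent on squarefreeness. Each $\Phi_\ell(u,v)/p_i\equiv1\pmod\ell$, so $T_\ell=0$ gives $d(u-v)r\equiv0\pmod\ell$. This is impossible since $1\le r<\ell$ and $\ell>|d|,u-v$.
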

\begin{proof}
Since $-\log_\calA(\alpha^{-1}) = \log_\calA(-\alpha) = \log_\calA \alpha$, there is no loss of generality in assuming that $\alpha > 1$.
Suppose for a contradiction that $\log_\calA(\alpha) = a/b \in \bbQ^\times$ with $a \neq 0$, $b>0$, and $\gcd(a,b) = 1$.
Then for all sufficiently large primes $p \ge N$, we have
\[
q_p(\alpha) \equiv a b^{-1} \pmod{p}.
\]
Write $\alpha = u/v$ with $\mathrm{gcd}(u, v) = 1$.
Note that $u>v>0$.
Let $\ell$ be a sufficiently large prime (such that $\ell > u, |a|, b, N$) and consider the value of the homogenization of the $\ell$th cyclotomic polynomial
\[
\Phi_\ell(u,v) \coloneqq v^{\ell-1} \frac{\alpha^\ell-1}{\alpha -1} = \frac{u^\ell - v^\ell}{u-v} = u^{\ell-1} + u^{\ell-2} v + \cdots + v^{\ell-1} \in \bbZ.
\]
First, by the lifting-the-exponent lemma, any prime divisor $p$ of $u-v$ does not divide $\Phi_\ell(u,v)$, since $p \nmid \ell$.
Let $p$ be any prime divisor of $\Phi_\ell(u,v)$.
Since $u$ and $v$ are coprime, we have $p\nmid v$.
Hence $\alpha^\ell\equiv 1 \pmod{p}$ and $\alpha \not\equiv 1 \pmod{p}$, so that $\ord_p(\alpha)=\ell$.
In particular, such a prime $p$ satisfies $p\equiv 1 \pmod{\ell}$, and therefore $p>\ell>N$.

We next show that $\Phi_\ell(u,v)$ is square-free.
For each prime $p\mid\Phi_\ell(u,v)$, write
\[
\Phi_\ell(u,v) = p t_p.
\]
From $\alpha^\ell = 1 + v^{1-\ell} (\alpha-1) pt_p$, we obtain
\[
\ell q_p(\alpha) \equiv q_p(\alpha^\ell) = \frac{(1 + v^{1-\ell} (\alpha-1) pt_p)^{p-1} - 1}{p} \equiv - v^{-\ell} (u-v) t_p \pmod{p}.
\]
By our assumption, we get
\begin{equation}\label{eq:contra-cong}
(u-v) b t_p + a \ell v^{\ell} \equiv 0 \pmod{p}.
\end{equation}
If $p \mid t_p$, then $a\ell v^\ell \equiv 0 \pmod{p}$, which is impossible since $p > \ell$ and $\ell$ is sufficiently large (note that $a\neq 0$).
Thus $p \nmid t_p$, and hence $\Phi_\ell(u,v)$ is square-free.

Now we define
\begin{equation}\label{def:Tell}
T_\ell\coloneqq (u-v) b \sum_{p \mid \Phi_\ell(u,v)} \frac{\Phi_\ell(u,v)}{p} + a\ell v^\ell \in \bbZ.
\end{equation}
Then \eqref{eq:contra-cong} implies that for each prime $p\mid\Phi_\ell(u,v)$, $T_\ell\equiv0\pmod{p}$.
Since $\Phi_\ell(u,v)$ is square-free, it follows that
\begin{equation}\label{eq:Phi<T}
\Phi_\ell(u,v) \mid T_\ell.
\end{equation}

Since $\Phi_\ell(u,v)$ is square-free, we can write $\Phi_\ell(u,v) = p_1 \cdots p_r$ with distinct primes $p_1 < \cdots < p_r$.
As observed above, each $p_j \equiv 1 \pmod{\ell}$, so we may write $p_j = \ell m_j+1$ with $m_1 < \cdots < m_r$.
In particular, $m_j \ge j$.
Since $p_j > \ell$, we have $\ell^r<\Phi_\ell(u,v)<\ell u^{\ell-1}<\ell^\ell$, which implies $1 \le r < \ell$.
Moreover,
\[
\sum_{p\mid\Phi_\ell(u,v)} \frac{1}{p} = \sum_{j=1}^r \frac{1}{\ell m_j +1} \le \sum_{j=1}^r \frac{1}{\ell j+1} < \frac{H_r}{\ell} < \frac{1+\log \ell}{\ell},
\]
where $H_n = \sum_{j=1}^n 1/j$ is the $n$th harmonic number.
Hence, for sufficiently large $\ell$, since $\alpha > 1$, we obtain
\[
\sum_{p\mid\Phi_\ell(u,v)}\frac{1}{p}<\frac{1}{2(u-v)b} \quad \text{and} \quad 2|a|<\frac{\Phi_\ell(u,v)}{\ell v^\ell},
\]
which implies
\begin{equation}\label{eq:T<Phi}
|T_\ell|<\Phi_\ell(u,v).
\end{equation}
Combining \eqref{eq:Phi<T} and \eqref{eq:T<Phi}, we conclude that $T_\ell = 0$.

By the definition of $T_\ell$ in~\eqref{def:Tell}, this yields
\[
(u-v)b\sum_{p\mid\Phi_\ell(u,v)}\frac{\Phi_\ell(u,v)}{p}=-a\ell v^\ell.
\]
For each $p_i$, since $p_i\equiv 1 \pmod{\ell}$, we have
\[
t_{p_i}=\frac{\Phi_{\ell}(u,v)}{p_i}=\prod_{1\le j\leq r, \ j\neq i} p_j \equiv 1\pmod{\ell}.
\]
Therefore, by reducing modulo $\ell$, we have
\[
(u-v)br\equiv 0 \pmod{\ell}.
\]
However, by construction, the prime $\ell$ is chosen to be larger than both $u-v>0$ and $b>0$, and moreover $1\leq r < \ell$, which yields a contradiction.
\end{proof}
When $\alpha=2$, only five primes are currently known to satisfy $q_p(2) \equiv 1 \pmod{p}$, that is,
\[
2^{p-1}\equiv 1+p \pmod{p^2},
\]
namely $3$, $29$, $37$, $3373$, and $2001907169$ (see OEIS \href{https://oeis.org/A125854}{A125854}). 
By analogy with non-Wieferich primes, this naturally suggests that there are infinitely many primes with $q_p(2) \not\equiv 1 \pmod{p}$.
The special case $\log_\calA(2)\neq 1$ of this theorem means that this infinitude holds unconditionally.

Furthermore, together with~\cref{thm:Silverman-0}, we immediately obtain the following result.
\begin{corollary}\label{cor:Silverman-irr}
Let $\alpha\not\in\{+1,-1\}$ be a nonzero rational number.
Then, $\log_\calA(\alpha) \in \calA\setminus\bbQ$ under the ABC conjecture.
\end{corollary}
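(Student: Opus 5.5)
The corollary follows by combining two results already in the paper. \cref{thm:unconditional_Silverman} unconditionally excludes all nonzero rational values, and \cref{thm:Silverman-0} excludes the value $0$ under the ABC conjecture.

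The plan is as follows. Fix a nonzero rational $\alpha\notin\{+1,-1\}$ and suppose, for contradiction, that $\log_\calA(\alpha)=c\in\bbQ$, where $\bbQ$ is viewed inside $\calA$ via the diagonal embedding.

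We split into two cases according to whether $c$ vanishes. If $c\neq0$, then $c\in\bbQ^\times$, which contradicts \cref{thm:unconditional_Silverman} directly, with no hypothesis needed. If $c=0$, then $\log_\calA(\alpha)=0$ in $\calA$. Concretely, $q_p(\alpha)\equiv0\pmod p$ for all but finitely many primes $p$, so all sufficiently large primes are $\alpha$-Wieferich. Under the ABC conjecture this contradicts \cref{thm:Silverman-0}. Hence $\log_\calA(\alpha)\notin\bbQ$.

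There is essentially no obstacle. The only points to check are bookkeeping. First, the hypotheses on $\alpha$ in the two cited theorems coincide with those of the corollary. Second, the zero element of $\calA$ corresponds to the rational $0$, so the case split $\bbQ=\bbQ^\times\cup\{0\}$ covers every rational value. All of the substantive difficulty lies in \cref{thm:unconditional_Silverman}, which is already proved, and in Silverman's theorem, which is taken as given.
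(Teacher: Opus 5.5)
Your proposal is correct and matches the paper's argument: the corollary is obtained by combining \cref{thm:unconditional_Silverman}, which unconditionally rules out nonzero rational values, with Silverman's \cref{thm:Silverman-0}, which rules out $\log_\calA(\alpha)=0$ under the ABC conjecture. Your case split over $\bbQ=\bbQ^\times\cup\{0\}$ is exactly this combination.
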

Let $(G_n(x))_n$ be the sequence of \emph{Gregory polynomials} defined by
\[
\frac{t(1+t)^x}{\log(1+t)} = \sum_{n=0}^\infty G_n(x) t^n.
\]
In particular, when $x=0$, the coefficients $G_n\coloneqq G_n(0)$ are called the \emph{Gregory coefficients}.
The Gregory coefficients are another analogue of the Bernoulli numbers, distinct from the Euler numbers; indeed, they are also sometimes called the \emph{Bernoulli numbers of the second kind}.
For a rational number $x$ and an integer $k \ge 2$, set $G_\calA(k; x) \coloneqq (G_{p-k}(x) \bmod{p})_p\in\calA$.
Then, the above corollary immediately strengthens \cite[Theorem~3.3]{KanekoMatsusakaSeki2025}.
\begin{corollary}
Let $k \ge 2$ be an integer and $x\in\bbZ\setminus[-k-1,-1]$.
Then $G_\calA(k;x)\in\calA\setminus\bbQ$ under the ABC conjecture.
\end{corollary}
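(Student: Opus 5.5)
The plan is to reduce the statement to \cref{cor:Silverman-irr} through the explicit evaluation of $G_\calA(k;x)$ established in \cite{KanekoMatsusakaSeki2025}. I expect (but have not checked) that the proof of \cite[Theorem~3.3]{KanekoMatsusakaSeki2025} expresses $G_{p-k}(x)\bmod p$, for all sufficiently large primes $p$, as a $\bbQ$-linear combination of Fermat quotients plus a rational term. In the language of $\calA$ this would read
\[
G_\calA(k;x)=r_{k,x}\log_\calA(\alpha_{k,x})+s_{k,x},
\]
where $r_{k,x},s_{k,x}\in\bbQ$ and $\alpha_{k,x}\in\bbQ^\times$. The constant $\alpha_{k,x}$ should come from a product of shifted integers such as $x+1,\dots,x+k$ (or a binomial-type quotient of them), since the Fermat quotients arise from expanding $(1+t)^{x}$ against $t/\log(1+t)$ modulo $p$. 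The first step is to recall this identity precisely from loc.\ cit.

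Granting the identity, the argument is a one-liner. Suppose $G_\calA(k;x)=c\in\bbQ$. Then $r_{k,x}\log_\calA(\alpha_{k,x})=c-s_{k,x}\in\bbQ$. If $r_{k,x}\neq0$, dividing gives $\log_\calA(\alpha_{k,x})\in\bbQ$, which contradicts \cref{cor:Silverman-irr} under the ABC conjecture, provided $\alpha_{k,x}\notin\{+1,-1\}$.

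The content of the proof is therefore to verify the two nondegeneracy conditions $r_{k,x}\neq0$ and $\alpha_{k,x}\neq\pm1$. The hypothesis $x\in\bbZ\setminus[-k-1,-1]$ should be exactly what guarantees both. For $x$ in that interval, one of the factors $x+j$ vanishes, or the product collapses to $\pm1$; in those cases $G_\calA(k;x)$ is rational, which explains why they are excluded. For $x\ge0$ or $x\le-k-2$, all the relevant factors are nonzero integers of the same sign. Their product then has absolute value at least $2$ (or its numerator and denominator differ in size), so $\alpha_{k,x}\neq\pm1$. The coefficient $r_{k,x}$ should be an explicit nonzero rational such as $\pm1/(k-1)!$ up to a nonzero factor, and I would read it off directly from the formula.

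The main obstacle is bookkeeping rather than anything conceptual. I need to confirm that the formula in \cite{KanekoMatsusakaSeki2025} really has this shape for every such $x$. In particular, I must check that no cancellation makes $\alpha_{k,x}=\pm1$ (for example, with $x=0$) and that the logarithmic coefficient never vanishes. If the formula instead involves several $\log_\calA$ values, the group-homomorphism property $\log_\calA(\alpha\beta)=\log_\calA(\alpha)+\log_\calA(\beta)$ lets me merge them into a single $\log_\calA$ of one rational number, after clearing to a common integer coefficient. The task then becomes checking that this combined rational number is not $\pm1$, which again follows from the sign and size of the factors outside $[-k-1,-1]$.
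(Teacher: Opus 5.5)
Your reduction is the same as the paper's. For general $x$ the identity you need is \cite[Theorem~3.8]{MatsusakaMiyazakiYara2026}, which generalizes \cite[Theorem~3.2]{KanekoMatsusakaSeki2025}:
\[
G_\calA(k;x)=(-1)^{k-1}\sum_{j=0}^k(-1)^j\binom{k}{j}(x+j+1)\log_\calA(x+j+1)=(-1)^{k-1}\log_\calA(h_k(x)),
\]
where $h_k(x)=\prod_{j=0}^k(x+j+1)^{(-1)^j(x+j+1)\binom{k}{j}}$. So $r_{k,x}=(-1)^{k-1}$ and $s_{k,x}=0$, and the entire content of the proof is the claim $h_k(x)\neq\pm1$. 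That is exactly the step your argument does not establish.

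The exponents $(-1)^j(x+j+1)\binom{k}{j}$ alternate in sign, so $h_k(x)$ is a quotient. For $x\ge0$, $\log h_k(x)$ is $(-1)^k$ times the $k$th forward difference of $y\log y$ at $y=x+1$. The individual terms have size about $x\log x$, while the alternating sum has size about $x^{1-k}$. In fact $h_k(x)\to1$ as $x\to\infty$; for example $h_2(0)=27/16$ and $h_2(1)=1024/729$. So knowing that the factors are nonzero and of one sign gives nothing. Numerator and denominator are asymptotically equal, so no crude size comparison can separate $h_k(x)$ from $1$.

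The missing idea, as in the paper, is to write that finite difference as an integral of the $k$th derivative. For $k\ge2$ we have $\frac{\mathrm{d}^k}{\mathrm{d}y^k}(y\log y)=(-1)^k(k-2)!\,y^{1-k}$, which has constant sign. For $x\ge0$ this gives
\[
\log h_k(x)=(k-2)!\int_{[0,1]^k}\frac{\mathrm{d}t_1\cdots\mathrm{d}t_k}{(x+1+t_1+\cdots+t_k)^{k-1}}>0,
\]
so $h_k(x)>1$. The case $x\le-k-2$ reduces to this via the symmetry $h_k(-k-2-x)=h_k(x)^{(-1)^{k+1}}$. To see it, substitute $j\mapsto k-j$; the sign factors cancel because $\sum_j(-1)^j\binom{k}{j}=\sum_j(-1)^j j\binom{k}{j}=0$ for $k\ge2$.

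An arithmetic argument would also close the gap. Among the $k+1$ consecutive positive integers $|x+1|,\dots,|x+k+1|$, some prime $q$ divides exactly one of them, say $|x+j_0+1|$. This follows from Bertrand's postulate if the smallest of them is at most $k+1$, and from Sylvester's theorem otherwise. The exponent of $q$ in $h_k(x)$ is then $(-1)^{j_0}(x+j_0+1)\binom{k}{j_0}v_q(x+j_0+1)\neq0$.

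Once one of these arguments is supplied, \cref{cor:Silverman-irr} finishes the proof exactly as you describe.
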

\begin{proof}
As a generalization of \cite[Theorem 3.2]{KanekoMatsusakaSeki2025}, we have
\begin{align*}
G_\calA(k;x) &= (-1)^{k-1} \sum_{j=0}^k (-1)^j \binom{k}{j} (x+j+1) \log_\calA(x+j+1)\\
&= (-1)^{k-1} \log_\calA(h_k(x))
\end{align*}
by \cite[Theorem 3.8]{MatsusakaMiyazakiYara2026}.
Here, we set
\[
h_k(x)\coloneqq\prod_{j=0}^k (x+j+1)^{(-1)^j (x+j+1)\binom{k}{j}}.
\]
We can check that $h_k(-k-2-x)=h_k(x)^{(-1)^{k+1}}$ and if $x\geq 0$ then $h_k(x)$ is rational and
\[
\log(h_k(x))=(k-2)!\int_{[0,1]^k}\frac{\mathrm{d}t_1\cdots\mathrm{d}t_k}{(x+1+t_1+\cdots+t_k)^{k-1}}>0.
\]
Therefore, the claim follows from \cref{cor:Silverman-irr}.
\end{proof}
In particular, $G_{\calA}(k)\coloneqq G_{\calA}(k;0)=(G_{p-k} \bmod{p})_p$ is an analogue of $Z_{\calA}(k)=(B_{p-k}/k \bmod{p})_p$, and it is interesting that, in the case of $G_{\calA}(k)$, one can obtain results on irrationality in this way.

\bibliography{references}

@article {Wong2019,
    AUTHOR = {Wong, Peng-Jie},
     TITLE = {On the {C}hebotarev-{S}ato-{T}ate phenomenon},
   JOURNAL = {J. Number Theory},
  FJOURNAL = {Journal of Number Theory},
    VOLUME = {196},
      YEAR = {2019},
     PAGES = {272--290}
}

@incollection {Harris2009,
    AUTHOR = {Harris, Michael},
     TITLE = {Potential automorphy of odd-dimensional symmetric powers of
              elliptic curves and applications},
 BOOKTITLE = {Algebra, arithmetic, and geometry: in honor of {Y}u. {I}.
              {M}anin. {V}ol. {II}},
    SERIES = {Progr. Math.},
    VOLUME = {270},
     PAGES = {1--21},
 PUBLISHER = {Birkh\"{a}user Boston, Boston, MA},
      YEAR = {2009}
}

@misc{MatsusakaMiyazakiYara2026,
      title={On finite analogues of {D}obi\'{n}ski's formula and of {E}uler's constant via {G}regory polynomials}, 
      author={Matsusaka, Toshiki and Miyazaki, Taichi and Yara, Shunta},
      eprint={2604.01578},
      note={\href{https://arxiv.org/abs/2604.01578}{arXiv:2604.01578}}
}

@incollection {Sutherland2019,
    AUTHOR = {Sutherland, Andrew V.},
     TITLE = {Sato-{T}ate distributions},
 BOOKTITLE = {Analytic methods in arithmetic geometry},
    SERIES = {Contemp. Math.},
    VOLUME = {740},
     PAGES = {197--248},
 PUBLISHER = {Amer. Math. Soc., [Providence], RI},
      YEAR = {2019}
}

@article {BGHT2011,
    AUTHOR = {Barnet-Lamb, Tom and Geraghty, David and Harris, Michael and
              Taylor, Richard},
     TITLE = {A family of {C}alabi-{Y}au varieties and potential automorphy
              {II}},
   JOURNAL = {Publ. Res. Inst. Math. Sci.},
  FJOURNAL = {Publications of the Research Institute for Mathematical
              Sciences},
    VOLUME = {47},
      YEAR = {2011},
    NUMBER = {1},
     PAGES = {29--98}
}

@book {Sills2018,
    AUTHOR = {Sills, A. V.},
     TITLE = {An invitation to the {R}ogers-{R}amanujan identities},
      NOTE = {With a foreword by George E. Andrews},
 PUBLISHER = {CRC Press, Boca Raton, FL},
      YEAR = {2018},
     PAGES = {xx+233},
      ISBN = {978-1-4987-4525-3},
   MRCLASS = {11P84 (05A17 05A19 05A30 33D05 33D15)},
MRREVIEWER = {Jeremy Lovejoy},
}

@article {Bressoud1981,
    AUTHOR = {Bressoud, D. M.},
     TITLE = {Some identities for terminating {$q$}-series},
   JOURNAL = {Math. Proc. Cambridge Philos. Soc.},
  FJOURNAL = {Mathematical Proceedings of the Cambridge Philosophical
              Society},
    VOLUME = {89},
      YEAR = {1981},
    NUMBER = {2},
     PAGES = {211--223},
}

@article {AnzawaFunakura2024,
    AUTHOR = {Anzawa, Takumi and Funakura, Hidetaka},
     TITLE = {Congruences for the {$q$}-{F}ibonacci sequence related to its
              transcendence},
   JOURNAL = {Ramanujan J.},
  FJOURNAL = {Ramanujan Journal. An International Journal Devoted to the
              Areas of Mathematics Influenced by Ramanujan},
    VOLUME = {63},
      YEAR = {2024},
    NUMBER = {4},
     PAGES = {1057--1072},
      ISSN = {1382-4090,1572-9303},
   MRCLASS = {11B39 (11M32)},
MRREVIEWER = {Rusen\ Li},
       DOI = {10.1007/s11139-023-00802-5},
       URL = {https://doi.org/10.1007/s11139-023-00802-5},
}

@article {LucaZudilin2025,
    AUTHOR = {Luca, Florian and Zudilin, Wadim},
     TITLE = {Irrationality and transcendence questions in the `poor man's
              ad\`ele ring'},
   JOURNAL = {Ramanujan J.},
  FJOURNAL = {Ramanujan Journal. An International Journal Devoted to the
              Areas of Mathematics Influenced by Ramanujan},
    VOLUME = {67},
      YEAR = {2025},
    NUMBER = {4},
     PAGES = {Paper No. 88, 10}
}

@article {Rosen2020,
    AUTHOR = {Rosen, Julian},
     TITLE = {A finite analogue of the ring of algebraic numbers},
   JOURNAL = {J. Number Theory},
  FJOURNAL = {Journal of Number Theory},
    VOLUME = {208},
      YEAR = {2020},
     PAGES = {59--71}
}

@article{KanekoZagier,
      title={Finite multiple zeta values}, 
      author={Kaneko, M. and Zagier, D.},
      year = {to appear},
      note={Advanced Studies in Pure Math.}
}

@article {LucaZudilin,
    AUTHOR = {Luca, Florian and Zudilin, Wadim},
     TITLE = {Poor man's transcendence for {F}robenius traces of elliptic curves},
     year = {to appear},
      note = {Advanced Studies in Pure Math.}
}

@article {Ax1968,
    AUTHOR = {Ax, James},
     TITLE = {The elementary theory of finite fields},
   JOURNAL = {Ann. of Math. (2)},
  FJOURNAL = {Annals of Mathematics. Second Series},
    VOLUME = {88},
      YEAR = {1968},
     PAGES = {239--271},
      ISSN = {0003-486X},
   MRCLASS = {10.80},
MRREVIEWER = {D.\ J.\ Lewis},
       DOI = {10.2307/1970573},
       URL = {https://doi.org/10.2307/1970573},
}

@article {Jarossay2020,
    AUTHOR = {Jarossay, David},
     TITLE = {Depth reductions for associators},
   JOURNAL = {J. Number Theory},
  FJOURNAL = {Journal of Number Theory},
    VOLUME = {217},
      YEAR = {2020},
     PAGES = {163--192},
      ISSN = {0022-314X,1096-1658},
   MRCLASS = {11M32 (11S82 14G32)},
MRREVIEWER = {Nils\ Matthes},
       DOI = {10.1016/j.jnt.2020.04.019},
       URL = {https://doi.org/10.1016/j.jnt.2020.04.019},
}

@article{Schur1917,
  author = {Schur, Issai},
  title = {Ein {B}eitrag zur additiven {Z}ahlentheorie und zur {T}heorie der {K}ettenbr{\"u}che},
  journal = {Sitzungsberichte der Preu{\ss}ischen Akademie der Wissenschaften, Physikalisch-Mathematische Klasse},
  year = {1917},
  pages = {302--321}
}

@misc{Rosen,
  author = {Rosen, Julian},
  title = {Sequential periods of the crystalline {F}robenius},
  eprint = {1805.01885},
  archivePrefix = {arXiv},
  primaryClass = {math.NT},
  note = {\href{https://arxiv.org/abs/1805.01885}{arXiv:1805.01885}},
  year = {unpublished}
}

@article {BoberFretwellMartinWooley2020,
    AUTHOR = {Bober, J. W. and Fretwell, D. and Martin, G. and Wooley, T.
              D.},
     TITLE = {Smooth values of polynomials},
   JOURNAL = {J. Aust. Math. Soc.},
  FJOURNAL = {Journal of the Australian Mathematical Society},
    VOLUME = {108},
      YEAR = {2020},
    NUMBER = {2},
     PAGES = {245--261},
      ISSN = {1446-7887,1446-8107},
   MRCLASS = {11N32 (11N25 12E05)},
MRREVIEWER = {Mario\ Pineda-Ruelas},
       DOI = {10.1017/s1446788718000320},
       URL = {https://doi.org/10.1017/s1446788718000320},
}

@article {Seki2024,
    AUTHOR = {Seki, S.},
     TITLE = {Regular primes, non-{W}ieferich primes, and finite multiple
              zeta values of level {$N$}},
   JOURNAL = {Integers},
  FJOURNAL = {Integers. Electronic Journal of Combinatorial Number Theory},
    VOLUME = {24},
      YEAR = {2024},
     PAGES = {Paper No. A22, 14},
      ISSN = {1553-1732},
   MRCLASS = {11M32 (11A41)},
MRREVIEWER = {Lee-Peng\ Teo},
}

@article {KanekoMatsusakaSeki2025,
    AUTHOR = {Kaneko, Masanobu and Matsusaka, Toshiki and Seki, S.},
     TITLE = {On finite analogues of {E}uler's constant},
   JOURNAL = {Int. Math. Res. Not. IMRN},
  FJOURNAL = {International Mathematics Research Notices. IMRN},
      YEAR = {2025},
    NUMBER = {2},
     PAGES = {Paper No. rnae281, 12},
      ISSN = {1073-7928,1687-0247},
   MRCLASS = {11M32 (11B68)},
MRREVIEWER = {Lee-Peng\ Teo},
       DOI = {10.1093/imrn/rnae281},
       URL = {https://doi.org/10.1093/imrn/rnae281},
}

@article {Silverman1988,
    AUTHOR = {Silverman, Joseph H.},
     TITLE = {Wieferich's criterion and the {$abc$}-conjecture},
   JOURNAL = {J. Number Theory},
  FJOURNAL = {Journal of Number Theory},
    VOLUME = {30},
      YEAR = {1988},
    NUMBER = {2},
     PAGES = {226--237},
      ISSN = {0022-314X,1096-1658},
   MRCLASS = {11D41 (11G05)},
MRREVIEWER = {Daniel\ Bertrand},
       DOI = {10.1016/0022-314X(88)90019-4},
       URL = {https://doi.org/10.1016/0022-314X(88)90019-4},
}

@article {Carlitz1954,
    AUTHOR = {Carlitz, L.},
     TITLE = {The class number of an imaginary quadratic field},
   JOURNAL = {Comment. Math. Helv.},
  FJOURNAL = {Commentarii Mathematici Helvetici},
    VOLUME = {27},
      YEAR = {1953},
     PAGES = {338--345 (1954)},
      ISSN = {0010-2571,1420-8946},
   MRCLASS = {10.0X},
MRREVIEWER = {A.\ L.\ Whiteman},
       DOI = {10.1007/BF02564567},
       URL = {https://doi.org/10.1007/BF02564567},
}

@article{Siegel1935,
  author  = {Siegel, Carl Ludwig},
  title   = {{\"U}ber die {C}lassenzahl quadratischer {Z}ahlk{\"o}rper},
  journal = {Acta Arithmetica},
  volume  = {1},
  year    = {1935},
  pages   = {83--86},
  doi     = {10.4064/aa-1-1-83-86}
}

@article {Vandiver1940,
    AUTHOR = {Vandiver, H. S.},
     TITLE = {Note on {E}uler number criteria for the first case of
              {F}ermat's last theorem},
   JOURNAL = {Amer. J. Math.},
  FJOURNAL = {American Journal of Mathematics},
    VOLUME = {62},
      YEAR = {1940},
     PAGES = {79--82},
      ISSN = {0002-9327,1080-6377},
   MRCLASS = {10.0X},
MRREVIEWER = {N.\ G. W. H. Beeger},
       DOI = {10.2307/2371437},
       URL = {https://doi.org/10.2307/2371437},
}

@misc{MatsuzukiSakamotoUeki,
  author = {Matsuzuki, Daichi and Sakamoto, Honami and Ueki, Jun},
  title = {Positive characteristic analogues of finite algebraic numbers},
  eprint = {2601.21209},
  archivePrefix = {arXiv},
  primaryClass = {math.NT},
  note={\href{https://arxiv.org/abs/2601.21209}{arXiv:2601.21209}}
}

@article {Martin02,
    AUTHOR = {Martin, Greg},
     TITLE = {An asymptotic formula for the number of smooth values of a
              polynomial},
   JOURNAL = {J. Number Theory},
  FJOURNAL = {Journal of Number Theory},
    VOLUME = {93},
      YEAR = {2002},
    NUMBER = {2},
     PAGES = {108--182},
      ISSN = {0022-314X,1096-1658},
   MRCLASS = {11N32 (11Y05 11Y11)},
MRREVIEWER = {G.\ Greaves},
       DOI = {10.1006/jnth.2001.2722},
       URL = {https://doi.org/10.1006/jnth.2001.2722},
}

@article {DukeFriedlanderIwaniec1995,
    AUTHOR = {Duke, W. and Friedlander, J. B. and Iwaniec, H.},
     TITLE = {Equidistribution of roots of a quadratic congruence to prime
              moduli},
   JOURNAL = {Ann. of Math. (2)},
  FJOURNAL = {Annals of Mathematics. Second Series},
    VOLUME = {141},
      YEAR = {1995},
    NUMBER = {2},
     PAGES = {423--441},
      ISSN = {0003-486X,1939-8980},
   MRCLASS = {11N64 (11L20)},
MRREVIEWER = {D.\ R.\ Heath-Brown},
       DOI = {10.2307/2118527},
       URL = {https://doi.org/10.2307/2118527},
}

@article {Toth2000,
    AUTHOR = {T\'{o}th, \'{A}rp\'{a}d},
     TITLE = {Roots of quadratic congruences},
   JOURNAL = {Internat. Math. Res. Notices},
  FJOURNAL = {International Mathematics Research Notices},
      YEAR = {2000},
    NUMBER = {14},
     PAGES = {719--739},
      ISSN = {1073-7928,1687-0247},
   MRCLASS = {11N32 (11L05 11L20 11N36)},
MRREVIEWER = {John\ B.\ Friedlander},
       DOI = {10.1155/S1073792800000404},
       URL = {https://doi.org/10.1155/S1073792800000404},
}

@article {Champernowne1933,
    AUTHOR = {Champernowne, D. G.},
     TITLE = {The Construction of Decimals Normal in the Scale of
              Ten},
   JOURNAL = {J. London Math. Soc.},
  FJOURNAL = {The Journal of the London Mathematical Society},
    VOLUME = {8},
      YEAR = {1933},
    NUMBER = {4},
     PAGES = {254--260},
      ISSN = {0024-6107,1469-7750},
   MRCLASS = {DML},
       DOI = {10.1112/jlms/s1-8.4.254},
       URL = {https://doi.org/10.1112/jlms/s1-8.4.254},
}

@article {Ford2008,
    AUTHOR = {Ford, Kevin},
     TITLE = {The distribution of integers with a divisor in a given
              interval},
   JOURNAL = {Ann. of Math. (2)},
  FJOURNAL = {Annals of Mathematics. Second Series},
    VOLUME = {168},
      YEAR = {2008},
    NUMBER = {2},
     PAGES = {367--433},
      ISSN = {0003-486X,1939-8980},
   MRCLASS = {11N25 (11N37)},
MRREVIEWER = {D.\ R.\ Heath-Brown},
       DOI = {10.4007/annals.2008.168.367},
       URL = {https://doi.org/10.4007/annals.2008.168.367},
}

@article{Mahler1937,
  author  = {Mahler, Kurt},
  title   = {Arithmetische {E}igenschaften einer {K}lasse von {D}ezimalbr{\"u}chen},
  journal = {Proceedings of the Koninklijke Nederlandse Akademie van Wetenschappen. Series A},
  volume  = {40},
  year    = {1937},
  pages   = {421--428}
}

\end{document}